\begin{document}

\theoremstyle{plain}
\newtheorem{thm}{Theorem}
\newtheorem{lemma}[thm]{Lemma}
\newtheorem{cor}[thm]{Corollary}
\newtheorem{conj}[thm]{Conjecture}
\newtheorem{prop}[thm]{Proposition}
\newtheorem{heur}[thm]{Heuristic}

\theoremstyle{definition}
\newtheorem{defn}[thm]{Definition}
\newtheorem*{ex}{Example}
\newtheorem*{notn}{Notation}

\title[Birth and Death Cutoff]{The Cutoff Phenomenon for Random Birth and Death Chains}
\author{Aaron Smith}

\address{Institute for Computational and Experimental Research in Mathematics,Brown University, Providence, RI}
\email{asmith3@math.stanford.edu}
\date{\today}
\maketitle

\section{Abstract}

For any distribution $\pi$ with support equal to $[n] = \{ 1, 2, \ldots, n \}$, we study the set $\mathcal{A}_{\pi}$ of tridiagonal stochastic matrices $K$ satisfying $\pi(i) K[i,j] = \pi(j) K[j,i]$ for all $i, j \in [n]$. These matrices correspond to birth and death chains with stationary distribution $\pi$. We study matrices $K$ drawn uniformly from $\mathcal{A}_{\pi}$, following the work of Diaconis and Wood on the case $\pi(i) = \frac{1}{n}$. We analyze a `block sampler' version of their algorithm for drawing from $\mathcal{A}_{\pi}$ at random, and use results from this analysis to draw conclusions about typical matrices. The main result is a soft argument comparing cutoff for sequences of random birth and death chains to cutoff for a special family of birth and death chains with the same stationary distributions. 

\section{Introduction}
In \cite{DiWo10}, Diaconis and Wood study the collection $\mathcal{A}_{(\frac{1}{n}, \frac{1}{n}, \ldots, \frac{1}{n})}$ of $n$ by $n$ doubly-stochastic tridiagonal matrices. These matrices are the transition kernels of birth and death (BD) chains with uniform stationary distribution, and \cite{DiWo10} uses detailed knowledge of the set $\mathcal{A}_{(\frac{1}{n}, \frac{1}{n}, \ldots, \frac{1}{n})}$ to obtain information about `typical' birth and death chain with uniform stationary distribution. In this paper, we extend many of their results to birth and death chains with general stationary distribution. \par 

There are two main contributions. Section \ref{SecGibbs1} contains an analysis of a `block sampler' version of their algorithm for sampling from the set of transition kernels with a given stationary distribution, and proves that the algorithm is quite efficient. Sections \ref{SecCutoffBasics} to \ref{SecNonCutoff} contain a description of the cutoff phenomenon for random sequences of birth and death chains with given stationary distributions. More precisely, Section \ref{SecCutoffBasics} introduces the results that will be used to describe cutoff, and section \ref{SecCutoffExamples} proves that some natural sequences of random birth and death chains do exhibit cutoff. Our primary result, the statement of which is slightly technical, is in Section \ref{SecNonCutoff}. This generalizes the main result of \cite{DiWo10} proving the lack of cutoff for random birth and death chains with uniform stationary distribution to many other families of birth and death chains using a probabilistic comparison technique.  \par 

We begin with some notation. Let $\pi$ be a distribution on $[n] = \{ 1, 2, \ldots, n \}$, and consider the set  $\mathcal{A}_{\pi}$ of tridiagonal stochastic matrices $K$ satisfying $\pi(i) K[i,j] = \pi(j) K[j,i]$ for all $i, j \in [n]$. This set may be viewed as a compact convex subset of $\mathbb{R}^{n-1}$, with the matrices parameterized by their super-diagonal entries $K[i,i+1]$. With the convention $c_{0} = c_{n} = 0$, we associate to a sequence $c_{i}$ satisfying 
\begin{equation} \label{BdIneqAllowableChains}
0 \leq c_{i} \leq \min \left( 1 - \frac{\pi(i-1)}{\pi(i)} c_{i-1} , \frac{\pi(i+1)}{\pi(i)} (1 -c_{i+1}) \right)
\end{equation}
a matrix $K \in \mathcal{A}_{\pi}$ given by:
\begin{align} \label{EqSupDiagRep}
K[i,i+1] &= c_{i} \\
K[i+1,i] &= \frac{\pi(i)}{\pi(i+1)}c_{i} \\
K[i,i] &= 1 - c_{i} - \frac{\pi(i-1)}{\pi(i)} c_{i-1} \\
K[i,j] &= 0, \, \, \, \, \, \, \vert i - j \vert > 1
\end{align}
It is easy to see that there is a similar parameterization by the sub-diagonal entries $K[i+1,i]$. We will sometimes need to use this other representation. \par 

We now recall some standard definitions from the theory of Markov chains. Let $\mu$, $\nu$ be two distributions on the measure space $(\Omega, \mathcal{A})$. The total variation distance between $\mu$ and $\nu$ is given by 
\begin{equation*}
\vert \vert \mu - \nu \vert \vert_{TV} = \sup_{A \in \mathcal{A}} \vert \mu(A) - \nu(A) \vert
\end{equation*}
Then the \textit{mixing profile} of an ergodic Markov chain $X_{t}$ with stationary distribution $\pi$ is given by
\begin{equation*}
\tau(\epsilon) = \sup_{X_{0} \in \Omega} \inf \{ t : \vert \vert \mathcal{L}(X_{t}) - \pi \vert \vert_{TV} < \epsilon \}
\end{equation*}
for any $0 < \epsilon < 1$. \par 
It is well-known that $d(t) \equiv \sup_{X_{0}, Y_{0} \in \Omega} \vert \vert \mathcal{L}(X_{t}) - \mathcal{L}(Y_{t}) \vert \vert_{TV}$ is submultiplicative and satisfies $d(t) \geq \vert \vert \mathcal{L}(X_{t}) - \pi \vert \vert_{TV}$. However, for many well-known sequences of Markov chains, the distance to stationarity drops from very close to 1 to very close to 0 over a distance that is much smaller than the $O(\tau(\frac{1}{4}))$ timescale suggested by submultiplicativity. This is known as the cutoff phenomenon. More precisely, a sequence $X_{t}^{(n)}$ of Markov chains with mixing profile $\tau_{n}(\epsilon)$ exhibits cutoff if
\begin{equation*}
\lim_{n \rightarrow \infty} \frac{\tau_{n}(\epsilon)}{\tau_{n}(1 - \epsilon)} = 1
\end{equation*}
for all $0 < \epsilon < 1$. \par 
The cutoff phenomenon gained its current name in \cite{AlDi86}. Over the following decade, there was a great deal of effort to prove that certain natural families of Markov chains exhibited cutoff (see \cite{Diac96} for a survey of early results). Almost all of these families consisted of chains with a great deal of symmetry. Recently, there has been a great deal of success in analyzing more complicated examples. Some of this progress, such as Sly and Lubetzky's amazing paper \cite{LuSl11}, consists of detailed analyses of specific chains. More importantly for this paper, general (and often easily checkable) necessary or sufficient conditions for cutoff have been found \cite{DiSa08} \cite{ChSa08} \cite{DLP08}. \par 
This paper will rely on a characterization of Total Variation cutoff for birth and death chains found in \cite{DLP08}. The necessary condition for cutoff, which applies for all sequences of Markov chains, is as follows. Let $K_{n}$ be a sequence of kernels of Markov chains satisfying $K_{n}[i,i] \geq \frac{1}{2}$ for all $1 \leq i \leq n$; these are called $\frac{1}{2}$-lazy chains. Then $K_{n}$ is a symmetric matrix with all real and nonnegative eigenvalues, the largest of which is 1. Let $\lambda_{n}$ be the second largest eigenvalue of $K_{n}$. Then let $\tau_{n} = \tau_{n}(\frac{1}{4})$ be its mixing time, and call $\frac{1}{1 - \lambda_{n}}$ its \textit{relaxation time}. Then if $\lim_{n \rightarrow \infty} \tau_{n} (1 - \lambda_{n}) < \infty$, the sequence of Markov chains doesn't exhibit cutoff (see proposition 18.4 of \cite{LPW09}). The much more difficult partial converse, Theorem 1 of \cite{DLP08}, is:

\begin{thm}[Cutoff for BD Chains] \label{BdThmCutoffCrit}
For any fixed $0 < \epsilon < \frac{1}{2}$, there exists a constant $C(\epsilon)$ so that for any $\frac{1}{2}$-lazy BD chain $X_{t}$ with spectral gap $\gamma$,
\begin{equation*}
\tau(\epsilon) - \tau(1- \epsilon) < C(\epsilon) \sqrt{\frac{\tau(\frac{1}{4})}{\gamma} }
\end{equation*}
\end{thm}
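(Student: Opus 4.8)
The plan is to follow the strategy of \cite{DLP08}: reduce by monotonicity to a chain started at an endpoint, trap the two profiles $\tau(\epsilon)$ and $\tau(1-\epsilon)$ between quantiles of the hitting time of a single state carrying about half of the stationary mass, use the classical fact that such a hitting time is a sum of independent geometric random variables whose parameters are read off from the spectrum, and close with Chebyshev's inequality. First I would symmetrize, writing $S = D^{1/2} K D^{-1/2}$ with $D = \operatorname{diag}(\pi)$, so that $S$ is symmetric with eigenvalues $1 = \lambda_1 > \lambda_2 \geq \cdots \geq 0$ and $\gamma = 1 - \lambda_2$. Birth and death chains are stochastically monotone for the order on $[n]$, and for such chains the maximal total variation distance to $\pi$ at each time is attained at an endpoint; call the worse endpoint $x_0$, say $x_0 = 1$, and let $m$ be minimal with $\pi(\{1,\dots,m\}) \geq \tfrac12$, so that for the chain started at $x_0$ reaching $m$ is the same event as reaching $\{m,\dots,n\}$, a set of $\pi$-mass $\geq \tfrac12$, while $I := \{1,\dots,m-1\}$ has $\pi(I) < \tfrac12$.

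The heart of the argument is to show that $d_{x_0}(t) := \Vert K^t(x_0,\cdot) - \pi\Vert_{TV}$ is trapped, up to the $O(\epsilon)$ slack permitted by the definition of cutoff and an additive error of the same order we are trying to bound, between the two tails $\mathbb{P}_{x_0}(\tau_m > t)$. The lower direction is easy: the confinement bound $d_{x_0}(t) \geq \mathbb{P}_{x_0}(\tau_m > t) - \pi(I)$, applied with an $\epsilon$-adjusted threshold in place of $m$ and iterated through submultiplicativity, gives $\tau(1-\epsilon) \geq q_{\epsilon/2}(\tau_m) - C(\epsilon)$. The upper direction is the substantial one: once the copies started at the two endpoints have both reached $m$, a monotone coupling with a stationary copy --- equivalently the strong stationary time and duality description of Diaconis and Fill for monotone chains --- shows the chain is essentially mixed, the residual ``mixing past $m$'' costing only $O(\sqrt{\tau(1/4)/\gamma})$; laziness kills any parity obstruction. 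This yields $\tau(\epsilon) \leq q_{1-\epsilon/2}(\tau_m) + C(\epsilon)\sqrt{\tau(1/4)/\gamma} + C(\epsilon)$. I expect this step to be the main obstacle: the spectral estimates below are soft, but the careful bookkeeping that pins $d$ between the two hitting-time tails is delicate, and in \cite{DLP08} is handled by cutting the chain at $m$ into two ``half chains'' and recursing.

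For the quantitative input I would invoke the description of first-passage times in birth and death chains due to Karlin--McGregor and Keilson (see also Fill and Diaconis--Miclo): $\tau_m$ started from $x_0$ is distributed as a sum $\sum_{i=1}^{m-1} G_i$ of independent geometric random variables of parameters $1 - \theta_i$, where $\theta_1 > \cdots > \theta_{m-1}$ are the eigenvalues of the principal submatrix $S_I$, that is, of the chain killed on reaching $m$. Hence $\mathbb{E}_{x_0}[\tau_m] = \sum_i (1-\theta_i)^{-1}$ and $\operatorname{Var}_{x_0}[\tau_m] \leq \sum_i (1-\theta_i)^{-2} \leq (1-\theta_1)^{-1}\,\mathbb{E}_{x_0}[\tau_m]$, so it suffices to bound $(1-\theta_1)^{-1}$ and $\mathbb{E}_{x_0}[\tau_m]$. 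For the first: for any $f$ supported on $I$, Cauchy--Schwarz gives $(\mathbb{E}_\pi f)^2 \leq \pi(I)\Vert f\Vert_{2,\pi}^2 \leq \tfrac12\Vert f\Vert_{2,\pi}^2$, hence $\operatorname{Var}_\pi(f) \geq \tfrac12\Vert f\Vert_{2,\pi}^2$ and the Dirichlet form of $K$ satisfies $\mathcal{E}(f,f) \geq \gamma\operatorname{Var}_\pi(f) \geq \tfrac{\gamma}{2}\Vert f\Vert_{2,\pi}^2$; since this form restricted to functions supported on $I$ is exactly the form of the killed chain, minimizing over such $f$ gives $1-\theta_1 \geq \gamma/2$, so $(1-\theta_1)^{-1} \leq 2/\gamma$. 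For the second: for every $y \leq m$ the event $\{X_t \in \{m,\dots,n\}\}$ lies in $\{\tau_m \leq t\}$, so $\mathbb{P}_y(\tau_m \leq \tau(1/4)) \geq \pi(\{m,\dots,n\}) - \tfrac14 \geq \tfrac14$, and a geometric restart argument gives $\mathbb{E}_{x_0}[\tau_m] \leq 4\,\tau(1/4)$. Combining, $\operatorname{Var}_{x_0}[\tau_m] \leq 8\,\tau(1/4)/\gamma$.

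Finally, Chebyshev's inequality gives $q_{1-\epsilon/2}(\tau_m) - q_{\epsilon/2}(\tau_m) \leq 2\sqrt{2/\epsilon}\,\sqrt{\operatorname{Var}_{x_0}[\tau_m]} \leq C(\epsilon)\sqrt{\tau(1/4)/\gamma}$. Feeding this and the two inequalities of the second paragraph into $\tau(\epsilon) - \tau(1-\epsilon)$, and noting that the whole argument goes through with $x_0$ replaced by the other endpoint, produces $\tau(\epsilon) - \tau(1-\epsilon) \leq C(\epsilon)\sqrt{\tau(1/4)/\gamma}$. So the one genuinely difficult ingredient is the structural trapping of the second paragraph; the reduction to an endpoint, the hitting-time identity, the conductance estimate $1 - \theta_1 \geq \gamma/2$, and the Chebyshev step are all routine.
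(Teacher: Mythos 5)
First, a point of reference: the paper does not prove this statement at all --- it is quoted verbatim as Theorem 1 of \cite{DLP08}, so there is no internal proof to compare against. Your proposal is, in effect, a reconstruction of the Ding--Lubetzky--Peres argument, and the skeleton is the right one. The routine ingredients are all correctly executed: the Karlin--McGregor/Keilson representation of the endpoint-to-quantile hitting time as a sum of independent geometrics (valid here because $\tfrac12$-laziness makes the killed submatrix positive semidefinite), the variance bound $\operatorname{Var}[\tau_m] \leq (1-\theta_1)^{-1}\mathbb{E}[\tau_m]$, the Dirichlet-form comparison giving $1-\theta_1 \geq \gamma/2$ from $\pi(I) \leq \tfrac12$, the restart bound $\mathbb{E}[\tau_m] \leq 4\tau(\tfrac14)$, and the Chebyshev step controlling the quantile spread by $\sqrt{\operatorname{Var}}$. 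The confinement lower bound $d_{x_0}(t) \geq \mathbb{P}_{x_0}(\tau_m > t) - \pi(I)$, applied at an $\epsilon$-adjusted quantile state, is also the right way to get $\tau(1-\epsilon)$ from below.

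The genuine gap is exactly where you flagged it, but it is larger than your sketch suggests. The claim that once the endpoint-started copies have reached $m$, ``a monotone coupling with a stationary copy shows the chain is essentially mixed, the residual mixing past $m$ costing only $O(\sqrt{\tau(1/4)/\gamma})$'' is not a reduction to a known lemma --- it \emph{is} the theorem, restated for the post-hitting phase. Monotone coupling and strong stationary duality give you that the chain started at $m$ eventually mixes, but with no rate better than $\tau(\tfrac14)$ itself; laziness and parity are not the obstruction. What \cite{DLP08} actually does here is quantitative: it cuts the state space at the quantile state into two half-chains, shows each half-chain has relaxation time $O(1/\gamma)$, and proves that from the quantile state the chain equilibrates on each half within time $O_\epsilon(\sqrt{t_{\mathrm{rel}}\cdot\tau(\tfrac14)})$ by a second application of the hitting-time concentration machinery (together with an $\ell^2$/spectral estimate from a central starting state). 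Without that second concentration argument the upper bound $\tau(\epsilon) \leq q_{1-\epsilon/2}(\tau_m) + C(\epsilon)\sqrt{\tau(1/4)/\gamma}$ is unproved, and the whole chain of inequalities collapses. So as written the proposal is an accurate map of the proof with its hardest component left as a pointer to the source rather than an argument.
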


The work on cutoff in this paper was inspired by the question of whether or not cutoff is `generic'. That is, do typical sequences of Markov chains exhibit cutoff? There has been a great deal of work on this question for random walks on groups, going back to work of Dou, Hildebrand and Wilson \cite{DoHi95} \cite{Wils97} and described in the survey paper \cite{Hild05}. More recently, this has progressed to other random walks with uniform stationary distribution \cite{LuSl09} \cite{DiWo10}. To our knowledge, this paper contains the first results describing cutoff for families of random chains with non-uniform stationary distribution. Looking at non-uniform distributions poses new challenges, and our techniques rely on comparing these non-uniform chains to related uniform chains. \par 

Throughout this paper, the kernel $K$ is always chosen uniformly from the set $\mathcal{A}_{\pi}$, but cutoff is always discussed for the associated $\frac{1}{2}$-lazy kernel, $K' = \frac{1}{2} (I + K)$. This allows Theorem \ref{BdThmCutoffCrit} to be applied. It is worth mentioning, in light of Theorem 3.1 of \cite{DLP08}, that there is nothing special about making the chain exactly $\frac{1}{2}$-lazy. In particular, for any fixed $0 < \delta < 1$, all theorems about the existence of cutoff for sequences of random chains $K' = \frac{1}{2} (I + K)$ will also apply to sequences of random chains $K'' = \delta I + (1 - \delta)K$, with modifications only to the cutoff location and bounds on the window size. Similar arguments can be made to apply for the original chain $K$ in many cases, but this requires substantial extra calculation. \\

\section{A Gibbs Sampler on $\mathcal{A}_{\pi}$} \label{SecGibbs1}
In Section 2.2 of \cite{DiWo10}, Diaconis and Wood propose several ways to sample from the set $\mathcal{A}_{\pi}$. They include an exact algorithm, which seemed to be slow in practice, and a Markov chain based algorithm without any rigorous running time bounds, which seemed to be quick in practice. This section describes an algorithm closely related to that used in \cite{DiWo10}, and provides a proof of its efficiency. Although the algorithm is quite efficient, it is not trivial to implement well. Its analysis is included here to answer a question posed in \cite{DiWo10}, and also because many of the lemmas proved in its analysis are useful for proving the main theorems about cutoff in this paper. For a practical discussion about implementing this sampler, a very different analysis resulting in a novel perfect sampling algorithm, and an $O(n \log(n)^{2})$ bound on the running time of the original algorithm, see \cite{Smit12a}.  \par

Define the space 
\begin{equation}
\mathcal{B}_{\pi} = \left\{ X \in \mathbb{R}^{n-1} \, : \, 0 \leq X[i] \leq \min \left( 1 - \frac{\pi(i-1)}{\pi(i)} X[i-1] , \frac{\pi(i+1)}{\pi(i)} (1 -X[i+1]) \right) \right\}
\end{equation}
 with the convention $X[0] = X[n] = 0$. From equation \eqref{BdIneqAllowableChains}, this is a parameterization of $\mathcal{A}_{\pi}$. Furthermore sampling uniformly from $\mathcal{B}_{\pi}$ is equivalent to sampling uniformly from $\mathcal{A}_{\pi}$. Next, fix an integer $k > 0$ and weight $w > 0$. \par 
We will define the following `block' Gibbs sampler $X_{t}$ on $\mathcal{B}_{\pi}$, for all $n > k$. At each step $t$, choose a coordinate $1 \leq i(t) \leq n - k $, according to the measure $P[i(t) = 1] = P[i(t) = n-k] = \frac{w}{n-k-2 + 2 w}$, and $P[i(t) = j] = \frac{1}{n-k-2 + 2 w}$ for $j \neq 1, n-k$. Next, update the entries of $X_{t}$ as follows. For $j \notin \{i(t), i(t) + 1, \ldots, i(t) + k -1 \}$, set $X_{t+1}[j] = X_{t}[j]$. For the other entries, choose $U_{t}$ uniformly in $\mathcal{B}_{\pi}$ conditioned on $U_{t}[j] = X_{t}[j]$ for all $j \notin \{i(t), i(t) + 1, \ldots, i(t) + k -1 \}$. Then, for all $j \in \{i(t), i(t) + 1, \ldots, i(t) + k -1 \}$, set $X_{t+1}[j] = U_{t}[j]$. The Gibbs sampler originally proposed in \cite{DiWo10} corresponded to the choice $w = k = 1$. \par 
Let $\mathcal{L}(X)$ denote the distribution of the random variable $X$, and let $U_{\pi}$ denote the uniform distribution on $\mathcal{B}_{\pi}$. The main result of this section is:

\begin{thm} [Convergence of Block Dynamics] \label{BdThmGenBlockDyn}
For $c>0$, $k = w = 55$, $t > 2 c n \log(n)$, and $n > 58$,
\begin{equation*}
\vert \vert \mathcal{L}(X_{t}) - U_{\pi} \vert \vert_{TV} \leq n^{1-c}
\end{equation*}
Conversely, for any choices of $k$ and $w$, any choice of $c > 0$, and $t = (\frac{1}{k} - c) n \log(n)$,
\begin{equation*}
\liminf_{n \rightarrow \infty} \vert \vert \mathcal{L}(X_{t}) - U_{\pi} \vert \vert_{TV} >0
\end{equation*}
\end{thm}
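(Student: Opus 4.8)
The plan is to treat the two halves separately: the upper bound by a coupling argument that follows the disagreement between two copies of the chain run with the same block choices, and the matching lower bound by a coupon-collector argument on the coordinates that no block has yet resampled. For the upper bound, run $X_t$ and $Y_t$ with a common sequence of block positions $i(t)$, and build the coupling of each update on one structural fact: the conditional law of the resampled block $(U_t[i(t)],\dots,U_t[i(t)+k-1])$ given the exterior coordinates depends only on the two boundary values $X_t[i(t)-1]$ and $X_t[i(t)+k]$, because the constraints cutting out $\mathcal{B}_\pi$ are nearest-neighbour. Hence if $X$ and $Y$ agree at both boundary coordinates the two resamplings can be coupled to agree on the whole block; if they agree at exactly one of the two, the key geometric input (discussed below) is that the optimal coupling of the two uniform-on-a-slice conditional laws makes the resampled blocks agree except on a run of length $D$ abutting the perturbed boundary, where the law of $D$ has an exponentially decaying tail, uniformly in $\pi$ and in $n$. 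Starting the usual path-coupling reduction from two configurations differing in a single coordinate and propagating this step coupling, the disagreement set can be maintained as an interval $[\tilde L_t,\tilde R_t]$ (or empty): a block covering the whole interval — possible once its length is $\le k$ — clears it; a block disjoint from its closed neighbourhood does nothing; a block overlapping it with one agreeing boundary shrinks it from that side by roughly $k$; and a block straddling an endpoint with a disagreeing boundary can push that endpoint outward, but only by $D=O(1)$ in expectation.

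Using (essentially) the interval length $\ell_t=\tilde R_t-\tilde L_t+1$ as the path-coupling potential, a one-step drift calculation gives $\mathbb{E}[\ell_{t+1}-\ell_t\mid\ell_t]\le-\frac{c}{n}\ell_t$ for an absolute constant $c>0$: of the $\asymp n$ block positions, $\asymp k$ of them contract the interval while only $\asymp 1$ near each endpoint can enlarge it, and each enlargement is light-tailed, so $k$ only has to be a large enough absolute constant; the boundary weight $w$ does the same job at coordinates $1$ and $n-1$, which are each covered by a single block, so $w$ must be comparably large. With $k=w=55$ (and $n>58$, the threshold past which the edge bookkeeping is valid) the per-step contraction comfortably exceeds $1-\frac1{2n}$, and path coupling with the diameter-$\asymp n$ metric then yields $\|\mathcal{L}(X_t)-U_\pi\|_{TV}\le n\,e^{-t/(2n)}\le n^{1-c}$ for $t>2cn\log n$. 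The step I expect to be the main obstacle is precisely the within-block coupling lemma: constructing the coupling of the two conditional laws explicitly and showing that a perturbation of one boundary value propagates into the block only an $O(1)$ distance — not merely an $O(k)$ distance — through the chain of constraints $0\le X[j]\le\min(1-\frac{\pi(j-1)}{\pi(j)}X[j-1],\frac{\pi(j+1)}{\pi(j)}(1-X[j+1]))$, uniformly in $\pi$. The rare event that the interval nonetheless grows past length $k$, so that no single block can clear it in one shot, must also be folded into the drift estimate, and the two boundary blocks $i=1$ and $i=n-k$ need separate treatment; these are the sources of the explicit constant $55$.

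For the lower bound, fix any $k,w$; for $c\ge 1/k$ the time $t=(\frac1k-c)n\log n$ is non-positive and the claim is trivial, so take $c\in(0,1/k)$, $t=\lfloor(\frac1k-c)n\log n\rfloor$, and start from $X_0\equiv 0$, i.e.\ the kernel $K=I\in\mathcal{A}_\pi$. A coordinate $j$ retains the value $0$ until the first step at which a block covering $j$ is chosen, so if no such block occurs before time $t$ then $X_t[j]=0$ exactly. An interior coordinate is covered in a single step with probability $\asymp k/n$, hence is still untouched at time $t$ with probability $\asymp n^{ck-1}$. Let $N_t$ count the untouched coordinates in a fixed set $S$ of $\asymp n/k$ coordinates spaced more than $k$ apart, so that $\mathbb{E}[N_t]\asymp n^{ck}\to\infty$; because a block covers $k$ consecutive coordinates, in any single step the events of covering two members of $S$ are disjoint, the corresponding indicators are therefore negatively correlated, and $\operatorname{Var}(N_t)\le\mathbb{E}[N_t]$. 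Chebyshev then gives $\Pr(N_t\ge 1)\to 1$. Since $\mathcal{B}_\pi$ is a full-dimensional polytope, $U_\pi(X[j]=0)=0$ for each $j$, hence $U_\pi(\exists\,j\in S:X[j]=0)=0$; testing total variation distance against the event $\{\exists\,j\in S:X[j]=0\}$ yields $\|\mathcal{L}(X_t)-U_\pi\|_{TV}\ge\Pr(N_t\ge 1)\to 1$. This bound is uniform in $\pi$ and in particular bounded away from $0$, which is the assertion.
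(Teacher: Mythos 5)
Your overall architecture matches the paper's: path coupling with common block choices for the upper bound, coupon collecting for the lower bound. Your lower bound is complete and correct (the second-moment argument on a $k$-spaced set of coordinates, plus the observation that $U_\pi$ gives zero mass to any coordinate equalling $0$, is a self-contained substitute for the paper's appeal to the classical Erd\H{o}s--R\'enyi asymptotics, and even yields the stronger conclusion that the distance tends to $1$). The interval-tracking potential you set up for the upper bound is harmless but unnecessary: once you have the one-step contraction for configurations differing in a single coordinate, path coupling already delivers the multi-step bound, which is all the paper uses.

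The genuine gap is exactly the step you flag as ``the main obstacle'' and then assume: that when the two chains disagree at one boundary of the resampled block, the optimal coupling of the two conditional laws makes the block agree outside a run of length $D$ with an exponential tail \emph{uniformly in $\pi$}. This uniformity is the technical heart of the paper and it is not automatic. The natural one-step minorization (the paper's Lemmas \ref{BdLemmaConcLipMin}--\ref{LemmaBdSmallness}, which rest on the Markov property of Theorem \ref{BdThmMarkovProp} that you correctly invoke) gives a per-coordinate coalescence probability $R(q)$ controlled by $C(q)=16\min\bigl(\tfrac{\pi(q+3)}{\pi(q+2)},1\bigr)$, and this degenerates as $\pi(q+3)/\pi(q+2)\to 0$: for rapidly decaying $\pi$ the super-diagonal entries are forced to be tiny and the coupling in that parameterization essentially never succeeds, so the disagreement \emph{can} propagate distance comparable to $k$ rather than $O(1)$. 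The paper's fix (Corollary \ref{BdCorGenMixing}) is to run the same minorization in the sub-diagonal parameterization as well, where the ratio of successive $\pi$-values is inverted, so that in every window of four coordinates at least one of the two couplings succeeds with probability at least $\tfrac{4}{27}$; this is what produces the uniform expected overshoot $\sum_i 4(\tfrac{23}{27})^i=27$ and hence the specific constants $k=w=55$. Without this (or an equivalent device) your drift inequality $\mathbb{E}[\ell_{t+1}-\ell_t\mid \ell_t]\le -\tfrac{c}{n}\ell_t$ with an absolute constant is unsupported, and the claim that ``$k$ only has to be a large enough absolute constant'' does not follow for general $\pi$.
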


The proof proceeds by a path-coupling argument. There are many variants of this argument, which was introduced in \cite{BuDe97}. In this proof, 
Theorem 19 of \cite{Olli09} will be used. For a fixed transition kernel $K$ on a metric space $(\mathcal{X},d)$ of finite diameter $diam(\mathcal{X})$, the theorem may be stated as follows:

\begin{thm} [Path Coupling] \label{BdThmPathCoup}
Assume the metric space $(\mathcal{X},d)$ has the property that for any $x,y \in \mathcal{X}$, there exists a sequence of points $x = x_{0}, x_{1}, \ldots, x_{k} = y$ with the properties $d(x_{i}, x_{i+1}) \leq 1$ and $d(x,y) = \sum_{i = 0}^{k-1} d(x_{i}, x_{i+1})$. Assume that for all pairs of points $x,y$ with $d(x,y) \leq 1$, it is possible to couple a pair of Markov chains $(X_{t},Y_{t})$ started at $(X_{0}, Y_{0}) = (x,y)$, both with transition kernel $K$, so that $E[d(X_{1}, Y_{1})] \leq \alpha d(x,y)$ for some $\alpha < 1$. Then, if $X_{t}$ and $Y_{t}$ are two Markov chains with transition kernel $K$ and any initial distribution on $X$, they may be coupled so that for all $t \geq 0$,
\begin{equation*}
E[d(X_{t}, Y_{t})] \leq \alpha^{t} \sup_{p,q \in \mathcal{X}} d(p,q)
\end{equation*}
\end{thm}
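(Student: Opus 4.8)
The plan is to prove the theorem in three stages: first to upgrade the one-step contraction hypothesis from adjacent pairs to arbitrary pairs of points, then to promote the resulting family of one-step couplings into a single Markovian coupling kernel on $\mathcal{X} \times \mathcal{X}$, and finally to iterate that kernel, closing with the tower property and the finite-diameter hypothesis.

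For the first stage, I would fix arbitrary $x, y \in \mathcal{X}$ and choose a geodesic chain $x = x_{0}, x_{1}, \ldots, x_{m} = y$ with $d(x_{i}, x_{i+1}) \leq 1$ and $\sum_{i=0}^{m-1} d(x_{i}, x_{i+1}) = d(x,y)$, which exists by hypothesis. For each $i$ the adjacency hypothesis supplies a coupling $(A^{(i)}, B^{(i)})$ of $K(x_{i}, \cdot)$ and $K(x_{i+1}, \cdot)$ with $E[d(A^{(i)}, B^{(i)})] \leq \alpha\, d(x_{i}, x_{i+1})$. Gluing these couplings along their common one-step marginals $K(x_{i+1}, \cdot)$ (the standard gluing lemma for couplings, i.e.\ disintegration over the shared coordinate, applied $m-1$ times) produces a single joint law of variables $Z^{(0)}, Z^{(1)}, \ldots, Z^{(m)}$ with $Z^{(i)} \sim K(x_{i}, \cdot)$ and $(Z^{(i)}, Z^{(i+1)}) \overset{d}{=} (A^{(i)}, B^{(i)})$. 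The triangle inequality and linearity of expectation then give
\[
E\bigl[d(Z^{(0)}, Z^{(m)})\bigr] \;\leq\; \sum_{i=0}^{m-1} E\bigl[d(Z^{(i)}, Z^{(i+1)})\bigr] \;\leq\; \alpha \sum_{i=0}^{m-1} d(x_{i}, x_{i+1}) \;=\; \alpha\, d(x,y),
\]
so $(Z^{(0)}, Z^{(m)})$ is a coupling of $K(x,\cdot)$ and $K(y,\cdot)$ with $E[d] \leq \alpha\, d(x,y)$ for \emph{every} pair $x,y$.

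In the second stage I would assemble these pairwise couplings into one coupling kernel: set $\hat{K}((x,y), \cdot)$ equal to the law of the coupling just built. By construction its marginals are $K(x,\cdot)$ and $K(y,\cdot)$, and $\int d(u,v)\, \hat{K}\bigl((x,y), d(u,v)\bigr) \leq \alpha\, d(x,y)$. The one genuine subtlety is measurability: the geodesic decomposition of $(x,y)$ and the choice of the adjacent couplings must be made to depend measurably on $(x,y)$, so that $\hat{K}$ really is a transition kernel. On a Polish state space this follows from a measurable selection theorem, and it is automatic in the finite-dimensional settings relevant to this paper; this is the point at which Ollivier's framework does its work.

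For the third stage I would run the chain $(X_{t}, Y_{t})$ on $\mathcal{X} \times \mathcal{X}$ with kernel $\hat{K}$, started from any coupling of the two prescribed initial distributions; each coordinate is then a copy of the original chain with the desired initial law. Conditioning on $(X_{t}, Y_{t})$ and using the one-step bound,
\[
E[d(X_{t+1}, Y_{t+1})] \;=\; E\bigl[\, E[\, d(X_{t+1}, Y_{t+1}) \mid (X_{t}, Y_{t})\,]\,\bigr] \;\leq\; \alpha\, E[d(X_{t}, Y_{t})],
\]
and induction gives $E[d(X_{t}, Y_{t})] \leq \alpha^{t} E[d(X_{0}, Y_{0})] \leq \alpha^{t} \sup_{p,q \in \mathcal{X}} d(p,q)$, the final inequality using finiteness of the diameter. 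The hard part is the measurable-selection and gluing bookkeeping in the second stage: the probabilistic content — the triangle-inequality estimate and the geometric decay under iteration — is elementary, but turning ``pick a geodesic, glue the adjacent couplings'' into an honest transition kernel is where the work lies.
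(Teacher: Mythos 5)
The paper itself gives no proof of this statement: it is quoted as Theorem 19 of \cite{Olli09}, so there is no internal argument to compare against. Your proposal is the standard Bubley--Dyer path-coupling proof and it is correct: gluing the adjacent-pair couplings along a geodesic chain and applying the triangle inequality upgrades the contraction to arbitrary pairs, and iterating the resulting coupling kernel with the tower property plus the finite-diameter hypothesis gives $E[d(X_{t},Y_{t})] \leq \alpha^{t} \sup_{p,q} d(p,q)$. This is essentially the same mechanism as in the cited source; Ollivier phrases the second and third stages as contraction of the $L^{1}$ transportation (Wasserstein) distance under the kernel rather than as an explicit Markovian coupling, which is an equivalent bookkeeping choice (your explicit coupling is, if anything, closer to the literal wording ``they may be coupled so that for all $t \geq 0$''). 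Your caveat about measurable selection of the geodesic and of the glued couplings is the right one to flag, and it is harmless in the setting where the theorem is applied here: $\mathcal{B}_{\pi}$ is a compact convex subset of $\mathbb{R}^{n-1}$, the metric is the Hamming-type distance taking finitely many values, and the couplings used are constructed explicitly, so the kernel $\hat{K}$ is genuinely measurable.
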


To use the theorem, it is necessary to first define a metric and then prove a one-step contraction estimate for nearby points. Define the following Hamming-like metric on $\mathcal{B}_{\pi}$: $d(X,Y) = \sum_{j=1}^{n-1} \textbf{1}_{X[j] \neq Y[j]}$. The goal is to find a 1-step coupling for the block dynamics so that if $d(X_{0}, Y_{0}) = 1$, $E[d(X_{1},Y_{1})] \leq \alpha < 1$ under the coupling. Throughout this proof, all couplings will proceed by choosing the same block at each step for both chains. The simple strategy is  informed by the heuristic from the theory of spin systems that strong spatial mixing (of the underlying object being sampled) implies rapid temporal mixing (of a Gibbs sampler on that object); see \cite{DSVW04} for one explanation of the heuristic. In this case, the underlying object is the vector of super-diagonal entries with geometry being that of the path, and of course the Gibbs sampler is the block dynamics.  \par 

To turn the weak convergence bound of Theorem \ref{BdThmPathCoup} into a bound on Total Variation distance, we use the following following standard lemma (see Theorem 5.2 of \cite{LPW09}): 
\par
\begin{lemma} [Fundamental Coupling Lemma] \label{LemmaFundCoup}

 Assume $(X_{t}, Y_{t})$ is a coupling of Markov chains such that if $X_{s} = Y_{s}$, then $X_{t} = Y_{t}$ for all $t > s$. Assume also that $X_{0} = x$ and $Y_{0}$ is distributed according to the stationary distribution of $K$. Define the random time $\tau$ to be the first time at which $X_{t} = Y_{t}$. Then $\sup_{A \in \Sigma} \vert K^{t}(x,A) - \pi(A) \vert \leq P[\tau > t]$
\end{lemma}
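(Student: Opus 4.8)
The plan is to reduce everything to a direct estimate on the difference of the two time-$t$ marginal laws, using only the defining properties of the coupling. First I would record what those marginals are: since $X_0 = x$, the random variable $X_t$ has law $K^t(x, \cdot)$, and since $Y_0$ has law $\pi$ and $\pi$ is stationary for $K$, a one-line induction using the Markov property shows that $Y_t$ has law $\pi$ for every $t$. Hence for any measurable set $A \in \Sigma$,
\begin{equation*}
K^t(x, A) - \pi(A) = P[X_t \in A] - P[Y_t \in A].
\end{equation*}

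Next I would split each of these probabilities according to whether the two chains have met by time $t$. Writing $P[X_t \in A] = P[X_t \in A, \, \tau \le t] + P[X_t \in A, \, \tau > t]$ and similarly for $Y_t$, the coalescence hypothesis (if $X_s = Y_s$ then $X_t = Y_t$ for all $t > s$) together with the definition of $\tau$ as the first meeting time gives $X_t = Y_t$ on the event $\{\tau \le t\}$. Therefore the terms $P[X_t \in A, \, \tau \le t]$ and $P[Y_t \in A, \, \tau \le t]$ are equal and cancel, leaving
\begin{equation*}
K^t(x, A) - \pi(A) = P[X_t \in A, \, \tau > t] - P[Y_t \in A, \, \tau > t] \le P[\tau > t].
\end{equation*}

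Running the same argument with the roles of $X_t$ and $Y_t$ interchanged bounds $\pi(A) - K^t(x, A)$ by $P[\tau > t]$ as well, so $\vert K^t(x,A) - \pi(A) \vert \le P[\tau > t]$; taking the supremum over $A \in \Sigma$ then yields the claimed inequality. I do not expect a genuine obstacle here, since this is a standard fact; the only points that require care are confirming that the coupling is honest — that the marginal of $Y_t$ really is $\pi$, which is exactly where stationarity of $\pi$ is used — and noting that it is precisely the hypothesis ``$X_s = Y_s$ implies $X_t = Y_t$ for all $t > s$'' that licenses the cancellation on $\{\tau \le t\}$. Everything else is bookkeeping.
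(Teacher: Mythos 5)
Your proof is correct and is the standard argument: the paper itself gives no proof of this lemma, simply citing Theorem 5.2 of \cite{LPW09}, and your decomposition over the event $\{\tau \le t\}$ with cancellation of the coalesced part is exactly the textbook argument behind that reference. The two points you flag as needing care (stationarity of the law of $Y_t$ and the coalescence hypothesis licensing the cancellation) are indeed the only substantive ones.
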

\par

Assume for now that $X_{0}$ and $Y_{0}$ differ only at a single coordinate $j$ satisfying $n-k > j > k + 1$. Smaller and larger values of $j$ will be evaluated later. The goal now is to find a coupling which minimizes the expected distance after a single block has been updated. $j$ is inside the block with probability $\frac{k}{n -k - 2 + 2w}$. In this case, the two blocks are updated with exactly the same distribution, and so under the obvious coupling the distance decreases by 1. With probability $\frac{2}{n -k - 2 + 2w}$, the block ends at $j-1$ or begins at $j+1$. In this case, the distance generally increases, potentially by as much as $k$. The goal is to find a coupling under which the distance is unlikely to increase by a large number. The main step is the following lemma, which describes the spatial mixing of coordinates for elements chosen from the uniform distribution on $\mathcal{B}_{\pi}$: \\

\begin{lemma} [Super-Diagonal Mixing] \label{BdLmGeneralSdMixing}
Fix $1 \leq i < j \leq n-1$ and $0 \leq b,c \leq 1$. Let $Z$ and $Q$ be chosen from $\mathcal{B}_{\pi}$ chosen uniformly conditioned on $Z[i] = b_{1}$, $Q[i] = b_{2}$ and $Z[j] = Q[j] = a$. Then, for $2 \ell \leq j-2$,
\begin{equation*}
\vert \vert \mathcal{L}( Z[i+2\ell]) - \mathcal{L}( Q[i+2 \ell]) \vert \vert_{TV} \leq \prod_{q =1}^{\ell} (1 - R(i + 2 q))
\end{equation*}
where $R(q) = 1 - [C(q)-1] \log \left(\frac{1}{1 - \frac{1}{C(q)}}\right)$ and $C(q) = 16 \min \left( \frac{\pi(q+3)}{\pi(q+2)}, 1 \right)$.
\end{lemma}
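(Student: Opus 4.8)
The structure to exploit is that $U_{\pi}$ is a nearest-neighbour Markov random field on the path $\{1,\dots,n-1\}$: the inequality defining $\mathcal{B}_{\pi}$ couples $X[m]$ only to $X[m-1]$ and $X[m+1]$. Hence, conditioned on $X[i]$ and $X[j]$, the coordinates $X[i+1],\dots,X[j-1]$ form a spatially Markov chain, independent of everything outside $[i,j]$; so $\mathcal{L}(Z[i+2\ell])$ is simply the law of $X[i+2\ell]$ under $U_{\pi}(\,\cdot \mid X[i]=b_{1},\,X[j]=a)$, and likewise for $Q$ with $b_{2}$. The plan is to realise these as the step-$2\ell$ marginals of two spatial Markov chains started from $b_{1}$ and $b_{2}$, and to bound the total variation distance between them by the product of the Dobrushin coefficients of the intervening \emph{two-step} transition kernels, proving each such coefficient is at most $1 - R(\cdot)$. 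The even index is forced here because a single step need not contract, whereas two steps wash out the influence of the starting value up to a factor $1-R$.

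First I would write the one-step conditional law explicitly. The constraints linking $X[m]$ and $X[m+1]$ are $0 \le X[m+1] \le 1 - \frac{\pi(m)}{\pi(m+1)}X[m]$, so the conditional density of $X[m+1]$ given $X[m]=z$ (and the terminal value at $j$) is proportional to $\mathbf{1}\{0 \le x \le U_{m}(z)\}\,Z_{m+1}(x)$, where $U_{m}(z) = 1 - \frac{\pi(m)}{\pi(m+1)}z$ is affine decreasing in $z$ and $Z_{m+1}(x)$ is the downstream partition function (the volume of feasible $(X[m+2],\dots,X[j-1])$ with $X[m+1]=x$, $X[j]=a$), which depends on $x$ only through the induced upper bound $V_{m+1}(x) = \min(\frac{\pi(m+1)}{\pi(m+2)},1) - \frac{\pi(m+1)}{\pi(m+2)}x$ on $X[m+2]$, hence is non-increasing in $x$. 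The point is that $z$ enters only through the truncation level $U_{m}(z)$, not through the shape $Z_{m+1}$. When I compose two steps, the downstream partition function $Z_{m+1}$ cancels against the normalising constant of the $X[m+1]\to X[m+2]$ step (which equals $Z_{m+1}(x)$), leaving the clean formula that the density of $X[m+2]$ given $X[m]=z$ at $y$ is proportional to $Z_{m+2}(y)\,\min\!\big(U_{m}(z),\,(A-y)/B\big)$ on $[0,A]$, where $A = \min(\frac{\pi(m+1)}{\pi(m+2)},1)$, $B = \frac{\pi(m+1)}{\pi(m+2)}$, and $Z_{m+2}$ is again a fixed non-increasing function. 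So the entire $z$-dependence of $\mathcal{L}(X[m+2]\mid X[m]=z)$ is carried by the single scalar $u = U_{m}(z)$, ranging over a subinterval of $[0,1]$ whose lower endpoint is controlled by local ratios of $\pi$.

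The heart of the argument is then the uniform overlap bound: for all admissible $u,u'$, one needs $\int \min(p_{u},p_{u'}) \ge R(m+2)$, where $p_{u}(y) \propto Z_{m+2}(y)\min(u,(A-y)/B)$. Assuming $u<u'$, the two densities are pointwise proportional (with the \emph{same} constant) on $[0,A-Bu']$ and again on $[A-Bu,A]$, so $\int\min(p_u,p_{u'})$ collapses to a combination of $\frac{1}{N(u')}\int_{A-Bu}^{A} Z_{m+2}(y)\frac{A-y}{B}\,dy$ and a ratio of partition-function integrals; bounding $Z_{m+2}$ between its values at $0$ and $A$ by monotonicity, and lower-bounding the admissible range of $u$ in terms of $\pi(m+1)/\pi(m+2)$ and $\pi(m+2)/\pi(m+3)$, reduces everything to an elementary integral of the form $(C-1)\int \frac{dt}{t}$ over a range of length $\sim 1/C$, which produces exactly $1 - (C-1)\log\frac{1}{1-1/C}$ with $C = C(m+2)$; the factor $16$ is the slack absorbed in replacing the several local ratios that appear by a single one and in the crude monotonicity estimates on $Z_{m+2}$. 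Composing the two-step couplings along $i,i+2,\dots,i+2\ell$ and using that the Dobrushin coefficient of a composition is at most the product of the coefficients then gives $\| \mathcal{L}(Z[i+2\ell]) - \mathcal{L}(Q[i+2\ell]) \|_{TV} \le \prod_{q=1}^{\ell}(1-R(i+2q))$.

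The main obstacle I anticipate is precisely this quantitative overlap estimate: making the crude bounds on $Z_{m+2}$ and on the admissible range of $u$ honest while keeping the resulting constant as clean as the stated $R(q)$, and verifying that $u$ stays bounded away from $0$ (this is where one must use that $\pi$ does not drop too quickly between nearby states, which is what the condition $C(q)>1$, i.e. $\pi(q+3)/\pi(q+2) > 1/16$, encodes). By contrast, the spatial-Markov reduction and the cancellation of the downstream partition function are essentially formal once the conditional densities are written out, and the passage from Dobrushin contraction to the product bound is standard.
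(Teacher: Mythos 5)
Your overall architecture coincides with the paper's: both arguments use the spatial Markov property of the super-diagonal entries (Theorem \ref{BdThmMarkovProp}) to view $\{Z[i+2q]\}_{q}$ and $\{Q[i+2q]\}_{q}$ as Markov chains sharing the terminal condition at $j$, establish a uniform-in-the-starting-value overlap of at least $R(\cdot)$ for each two-step transition, and multiply the failure probabilities. The difference lies in how that overlap bound is obtained, and this is exactly where your sketch is thinnest. The paper never computes the two-step kernel or the integral $\int \min(p_{u},p_{u'})$ directly. It instead isolates two abstract properties of the conditional CDF $F(x)=P[X[i+2]\le x \mid X[i],X[i+4]]$: concavity (Lemma \ref{LemmaBdLargenessGibbs}, proved by a measure-preserving reflection of the coordinate about the midpoint of an interval --- an ingredient absent from your sketch) and the linear upper bound $F(x)\le Cx$ with $C=16\min(1,\pi(i+3)/\pi(i+2))$ (Lemma \ref{LemmaBdSmallness}). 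From these two facts alone, the elementary Lemma \ref{BdLemmaConcLipMin} gives the pointwise density bound $F'(x)\ge \frac{1-Cx}{1-x}$ on $[0,C^{-1}]$, i.e.\ an explicit common minorizing measure of total mass exactly $R=1-(C-1)\log\frac{1}{1-1/C}$, valid simultaneously for every conditioning value. The stated $R(q)$ is the mass of that minorizing density, not the value of a pairwise overlap computation.

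Two concrete points in your version need repair. First, the claim that the direct overlap integral ``produces exactly'' $1-(C-1)\log\frac{1}{1-1/C}$ is unsubstantiated and implausible: $\int\min(p_{u},p_{u'})$ for two truncation levels is a $(u,u')$-dependent quantity, and to extract a clean uniform constant you would in effect have to pass to a common minorizing measure anyway, which is the paper's route. Second, ``bounding $Z_{m+2}$ between its values at $0$ and $A$ by monotonicity'' is too crude: the downstream partition function can vanish at the right end of its range, so a two-point bound gives no uniform control over the ratio of integrals you need. The paper handles precisely this difficulty inside the proof of Lemma \ref{LemmaBdSmallness} via the three-case analysis of the feasible-interval-length ratios $R_{1},R_{2}$, yielding $P[X[i+2]\in[r\alpha,(r+1)\alpha]]\ge(1-(r+1)\alpha)\bigl(1-\tfrac{\pi(i+2)}{\pi(i+3)}(r+1)\alpha\bigr)P[X[i+2]\le\alpha]$ and, after summation over $r$, the factor $16$. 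So your plan is workable in outline and structurally matches the paper, but the quantitative step you defer as the ``main obstacle'' is the entire content of the lemma, and the concavity-plus-Lipschitz decomposition is the mechanism that actually makes it go through.
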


The proof of this lemma will occupy most of the remainder of this section. The first step is the following easy coupling lemma. Let $F$ be a differentiable cumulative distribution function on $[0,1]$ which satisfies
\begin{itemize}
\item $F(x) \geq x$
\item $F(x) \leq Cx$ for some $C > 1$ 
\item $F(x)$ is concave
\end{itemize} 
Then let $\Psi$ be the cumulative distribution function on $[0, C^{-1}]$ with density $\xi(x) = Z^{-1} \frac{1 - Cx}{1 - x}$, where $Z = 1 - [C-1] \log \left(\frac{1}{1 - C^{-1}}\right)$ is a normalizing constant. Then
\begin{lemma}[Minorization for Concave Lipschitz Distributions] \label{BdLemmaConcLipMin}
For $F$ and $\Psi$ described above, and $[a,b] \subset [0, C^{-1}]$,
\begin{equation*}
F(b)- F(a) \geq Z(\Psi(b) - \Psi(a))
\end{equation*}
\end{lemma}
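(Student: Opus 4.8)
The plan is to reduce the whole statement to a pointwise lower bound on $F'$. By the definition of $\xi$ we have, for $x \in [0, C^{-1}]$,
\begin{equation*}
Z\,\Psi(x) = \int_0^x \frac{1 - Ct}{1-t}\,dt ,
\end{equation*}
and writing $\frac{1-Ct}{1-t} = C - \frac{C-1}{1-t}$ one checks at once that $\int_0^{C^{-1}} \frac{1-Ct}{1-t}\,dt = 1 - (C-1)\log\frac{1}{1-C^{-1}} = Z$, so that $\Psi$ is indeed a cumulative distribution function on $[0,C^{-1}]$ with the stated normalizing constant, and $Z>0$ because the integrand is positive on $[0,C^{-1})$. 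Consequently, since $F(b)-F(a) = \int_a^b F'(x)\,dx$ for $[a,b]\subset[0,C^{-1}]$ (a differentiable concave function is absolutely continuous on such a subinterval, so the fundamental theorem of calculus applies), it suffices to prove the pointwise estimate
\begin{equation*}
F'(x) \geq \frac{1 - Cx}{1-x}, \qquad x \in [0, C^{-1}).
\end{equation*}

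To establish this, I would first note that letting $x \to 0$ in $F(x) \leq Cx$ forces $F(0) \leq 0$, so $F(0) = 0$ since $F$ is a CDF; also $F(1) = 1$. Now fix $x \in [0,1)$. Because $F$ is differentiable and concave, its graph lies below each of its tangent lines; evaluating the tangent line at $x$ at the point $1$ gives
\begin{equation*}
1 = F(1) \leq F(x) + F'(x)(1-x).
\end{equation*}
Since $1-x > 0$ this rearranges to $F'(x) \geq \frac{1 - F(x)}{1-x}$, and the hypothesis $F(x) \leq Cx$ gives $1 - F(x) \geq 1 - Cx$, whence $F'(x) \geq \frac{1 - Cx}{1-x}$. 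This holds for every $x \in [0,1)$, in particular throughout $[0, C^{-1})$.

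Integrating this inequality over $[a,b] \subset [0, C^{-1}]$ then yields
\begin{equation*}
F(b) - F(a) = \int_a^b F'(x)\,dx \;\geq\; \int_a^b \frac{1 - Cx}{1-x}\,dx \;=\; Z\bigl(\Psi(b) - \Psi(a)\bigr),
\end{equation*}
which is exactly the claim.

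As for where the difficulty lies: there is essentially no obstacle. The only genuine idea is the supporting-line inequality for the concave function $F$ combined with the linear upper bound $F(x)\leq Cx$; it is worth observing that the other hypotheses ($F(x)\geq x$, concavity beyond the tangent-line bound) are not needed for this particular minorization. The only routine computations to carry out carefully are the evaluation of $\int_0^{C^{-1}} \frac{1-Ct}{1-t}\,dt$ so as to recognize the normalizing constant $Z$, and the (standard) justification that $F$ recovers the integral of $F'$ on compact subintervals of $[0,1)$.
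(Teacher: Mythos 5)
Your proof is correct and takes essentially the same route as the paper: both reduce the claim to the pointwise bound $F'(x) \geq \frac{1-Cx}{1-x}$, obtained from concavity (the tangent/supporting-line inequality at $x$ evaluated at $1$, which is exactly the paper's bound $\int_a^1 F'(t)\,dt \leq (1-a)F'(a)$) combined with the Lipschitz hypothesis $F(x) \leq Cx$. The only cosmetic difference is that the paper bounds $F(a)$ via $\int_0^a F'(t)\,dt \leq aF'(0) \leq aC$ while you invoke $F(a) \leq Ca$ directly; your remark that the hypothesis $F(x) \geq x$ is not needed for this lemma is also accurate.
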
 
To prove this, note that
\begin{align*}
1 &= \int_{0}^{a} F'(x) dx + \int_{a}^{1} F'(x) dx \\
& \leq a F'(0) + (1-a)F'(a) \\
&\leq aC + (1-a)F'(a)
\end{align*}
where the first inequality is due to assumption $iii)$, and the second is due to assumption $ii)$. Thus,
\begin{equation*}
F'(x) \geq \frac{1-Cx}{1-x}
\end{equation*}
proving the lemma. $\square$ \par 

In order to use this, it is necessary to relate this coupling of a single entry in the updated block to some sort of coupling of the entire updated block. The following generalization of Theorem 4.1 of \cite{DiWo10} provides a strong way to do so:

\begin{thm} [Markov Property for Super-Diagonal Entries] \label{BdThmMarkovProp}
 Let $B$ be chosen uniformly from $\mathcal{B}_{\pi}$. Then for any $1 \leq i \leq n-2$, any real constants $a_{1}, \ldots, a_{i}$ in the interval $[0,1]$ and any $t \in [0,1]$, 
\begin{equation} \label{BdEqMarkovChain}
P[B[i+1] \leq t \vert B[1] = a_{1}, \ldots,B[i] = a_{i}] = P[B[i+1] \leq t \vert  B[i] = a_{i}]
\end{equation} 
\end{thm}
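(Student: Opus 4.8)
The plan is to recognize the uniform law on $\mathcal{B}_{\pi}$ as a Markov random field on the path graph $1 - 2 - \cdots - (n-1)$, and then to read off \eqref{BdEqMarkovChain} from the induced factorization of the joint density. The observation that carries the whole proof is that every inequality in the definition of $\mathcal{B}_{\pi}$ involves at most two coordinates, and whenever it involves two they are adjacent. Concretely, I would note that the two inequalities the definition imposes ``across'' the edge $\{j,j+1\}$ — namely $X[j] \leq \frac{\pi(j+1)}{\pi(j)}(1 - X[j+1])$ coming from the bound on $X[j]$, and $X[j+1] \leq 1 - \frac{\pi(j)}{\pi(j+1)} X[j]$ coming from the bound on $X[j+1]$ — are literally the same inequality $\frac{\pi(j)}{\pi(j+1)} X[j] + X[j+1] \leq 1$. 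Consequently, after absorbing the conventions $X[0] = X[n] = 0$, the indicator of $\mathcal{B}_{\pi}$ factors as $\prod_{j=1}^{n-1} \mathbf{1}[X[j] \geq 0]$ times $\prod_{j=1}^{n-2} \phi_{j}(X[j], X[j+1])$, where $\phi_{j}(x,y) = \mathbf{1}[\frac{\pi(j)}{\pi(j+1)} x + y \leq 1]$, plus two harmless one-variable boundary factors from $j = 1$ and $j = n-1$; so the density of $B$ with respect to Lebesgue measure on $\mathbb{R}^{n-1}$ has a nearest-neighbour product form.

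Given this product form, I would conclude by the standard factorization criterion for conditional independence. Fixing $1 \leq i \leq n-2$, split the density as $f(x_{1}, \ldots, x_{n-1}) = h(x_{1}, \ldots, x_{i}) \, g(x_{i}, x_{i+1}, \ldots, x_{n-1})$, collecting into $h$ the potentials supported in $\{1, \ldots, i\}$ and into $g$ those supported in $\{i, \ldots, n-1\}$ (the vertex potential at $i$ going into $g$); the two blocks share only the variable $x_{i}$. For a feasible prefix $a_{1}, \ldots, a_{i}$ the factor $h(a_{1}, \ldots, a_{i})$ is a positive constant that cancels on normalization, so after integrating out $B[i+2], \ldots, B[n-1]$ the conditional density of $B[i+1]$ given $B[1] = a_{1}, \ldots, B[i] = a_{i}$ depends on the prefix only through $a_{i}$ and coincides with the conditional density of $B[i+1]$ given $B[i] = a_{i}$ alone. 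Integrating that density over $[0,t]$ gives exactly \eqref{BdEqMarkovChain}. This mirrors, and generalizes, the proof of Theorem 4.1 of \cite{DiWo10}, which is the case $\pi \equiv \frac{1}{n}$.

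The argument is soft, so the only thing that genuinely needs care is that the conditioning events $\{B[i] = a_i\}$ have Lebesgue measure zero: the statement must be read as an identity between fixed versions of the regular conditional distributions (equivalently, it holds for almost every prefix), and I would commit to the conditional densities written above and check that the cancellation of $h$ is legitimate — that $h(a_{1}, \ldots, a_{i})$ and the normalizing integral of $g$ are both finite and strictly positive for the relevant $(a_{1}, \ldots, a_{i})$. Both hold because $\mathcal{B}_{\pi}$ is a bounded convex body with nonempty interior and each of its coordinate slices obtained by fixing a feasible prefix is again such a body; these are elementary and of the same flavour as the geometric facts used elsewhere in this section. The one place I would not cut corners is the edge-by-edge identification in the first paragraph: it is precisely what guarantees the density factorizes over the edges of a path rather than over larger cliques, and hence that the dependence structure is governed by the path graph.
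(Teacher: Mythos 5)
Your proof is correct and is in substance the same as the paper's: the identity $g_{i+1,n}(x) = f_{1,i-1}(1)\, f_{i+1,n}(x)$ at the heart of the paper's computation is exactly your splitting of the indicator density into factors $h$ and $g$ sharing only the coordinate $x_{i}$, which in turn rests on the nearest-neighbour form of the constraints that you verify edge by edge. Your Markov-random-field packaging is somewhat more conceptual than the paper's explicit iterated integrals --- it makes the measure-zero conditioning issue explicit and immediately yields the tree generalization the paper states only as an aside --- but the underlying mechanism is identical.
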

The proof is the same as in the reference, and both the idea and notation are copied. Define $X_{i} = B[i]$, and let
\begin{equation*}
f_{i,n}(x) = \int_{X_{i+1} = 0}^{\min(x, \frac{\pi(i+2)}{\pi(i+1)})} \int_{X_{i+3}=0}^{\min(1 - \frac{\pi(i+2)}{\pi(i+3)} X_{i+2}, \frac{\pi(i+4)}{\pi(i+3)})} \ldots \int_{X_{n-1} = 0}^{\min(1 - \frac{\pi(n-3)}{\pi(n-2)} X_{n-2}, \frac{\pi(n-1)}{\pi(n-2)})} dX_{n-1} dX_{n-2} \ldots dX_{i+1}
\end{equation*}
The left-hand side of equation \ref{BdEqMarkovChain} can then be written as $\frac{f_{i+1,n}(\min(t, 1 - \frac{\pi(i)}{\pi(i+1)} a_{i}))}{f_{i+1,n}(1 - \frac{\pi(i+1)}{\pi(i)})}$. Next, define
\begin{equation*}
g_{i+1,n}(x) = \int_{X_{1} = 0}^{\min(1, \frac{\pi(2)}{\pi(1)})} \ldots   \int_{X_{i+1} = 0}^{\min(x,\frac{\pi(i+1)}{\pi(i)})} \int_{X_{i+2}=0}^{\min(1- \frac{\pi(i)}{\pi(i+1)}X_{i+1}, \frac{\pi(i+2)}{\pi(i+1)} )} \ldots \int_{X_{n-1}=0}^{\min(1 - \frac{\pi(n-2)}{\pi(n-1)} X_{n-2},\frac{\pi(n)}{\pi(n-1)})} dX
\end{equation*}
The right-hand side of equation \ref{BdEqMarkovChain} can then be written as $\frac{g_{i+1,n}(\min(t, 1 - \frac{\pi(i)}{\pi(i+1)} a_{i}))}{g_{i+1,n}(1 - \frac{\pi(i+1)}{\pi(i)} a_{i})}$. Next note that for any fixed $0 \leq x \leq 1$, $g_{i+1,n}(x) = f_{1,i-1}(1) f_{i+1,n}(x)$. Thus,
\begin{align*}
P[K[i+1,i+2] \leq t \vert  K[i,i+1] = a_{i}] &= \frac{g_{i+1,n}(\min(t, 1 - \frac{\pi(i)}{\pi(i+1)} a_{i}))}{g_{i+1,n}(1 - \frac{\pi(i+1)}{\pi(i)} a_{i})} \\
&= \frac{f_{1,i-1}(1) f_{i+1,n}(\min(t, 1 - \frac{\pi(i)}{\pi(i+1)} a_{i}))}{f_{1,i-1}(1)f_{i+1,n}(1 - \frac{\pi(i+1)}{\pi(i)})} \\
&= P[K[i+1,i+2] \leq t \vert K[1,2] = a_{1}, \ldots, K[i,i+1] = a_{i}]
\end{align*}
This proves the lemma. $\square$ \par 

As an aside, several closely related lemmas are possible. For example, fix a distribution $\pi$ on $[n]$, and a rooted tree $T = ([n], E)$. One could sample from all kernels $K$ which satisfy $K[x,y] = 0$ if $(x,y) \notin E$, and which give rise to Markov chains with stationary distribution $\pi$. Then the same factorization argument says that for any vertex $v \in T$ with unique parent $p$, child $c_{1}$, and with edges $(p_{1,1}, p_{1,2}), \ldots, (p_{m,1}, p_{m,2})$ such that any path from $p_{i,1}$ to $v$ passes through $p_{i,2}$ and then $p$,
\begin{equation*}
P[K[v, c_{1}] \leq x \left| K[p,v] = a_{0}, K[p_{i,1}, p_{i,2}  \right.] = a_{i}] = P[K[v, c_{1}] \leq x \vert K[p,v] = a_{0}]
\end{equation*}
Since less is known about general chains on trees, it is harder to apply this result to the study of cutoff. \par 

In order to use Lemma \ref{BdLemmaConcLipMin}, the following two bounds on the entries of random elements of $\mathcal{B}_{\pi}$ are necessary. First, we bound the distribution from above in terms of uniform random variables on $[0,1]$:

\begin{lemma} [Largeness] \label{LemmaBdLargenessGibbs}

Let $B$ be chosen uniformly from $\mathcal{B}_{\pi}$. For any interval $(a,b) \subset [0,1]$, 
\begin{equation*}
P \left[ B[i] \in \left(a, \frac{a+b}{2}\right) \right] \geq P\left[B[i] \in \left( \frac{a+b}{2}, b \right)\right]
\end{equation*}
This remains true when conditioned on the particular values of any collection of other entries, in the following sense. Let $\mathcal{F}_{i,j,u,v}$ be the event that $B[i] = u$ and $B[j] = v$. Then:
\begin{equation*}
P \left[B[i] \in \left(a, \frac{a+b}{2}\right) \Bigg| \mathcal{F}_{i - k_{1}, i+ k_{2}, u, v}\right] \geq P\left[B[i] \in \left( \frac{a+b}{2}, b \right) \Bigg| \mathcal{F}_{i - k_{1}, i+ k_{2}, u, v} \right]
\end{equation*}
\end{lemma}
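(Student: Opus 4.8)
The plan is to show that the probability density of $B[i]$ is non-increasing on $[0,1]$; the claimed inequality then follows at once. Indeed, if $f\colon[0,1]\to[0,\infty)$ is non-increasing and $m=\frac{a+b}{2}$, then with $h=m-a=b-m$ we have $\int_a^m f(t)\,dt=\int_0^h f(a+s)\,ds\ge\int_0^h f(m+s)\,ds=\int_m^b f(t)\,dt$, since $a+s\le m+s$ gives $f(a+s)\ge f(m+s)$. The conditional statement will be handled the same way once the relevant conditional density is shown to be non-increasing.

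Because $\mathcal{B}_\pi$ is a bounded polytope and $B$ is uniform on it, the density of $B[i]$ at $t$ is $\phi_i(t)/\mathrm{vol}(\mathcal{B}_\pi)$, where $\phi_i(t)$ is the $(n-2)$-dimensional volume of the slice $\{X\in\mathcal{B}_\pi:X[i]=t\}$. The first step is to note that freezing $X[i]=t$ decouples this slice into a Cartesian product. The inequality indexed by $i$ in the definition of $\mathcal{B}_\pi$, namely $X[i]\le\min\!\big(1-\tfrac{\pi(i-1)}{\pi(i)}X[i-1],\ \tfrac{\pi(i+1)}{\pi(i)}(1-X[i+1])\big)$, becomes, with $X[i]=t$ fixed, the two \emph{separate} constraints $X[i-1]\le\tfrac{\pi(i)}{\pi(i-1)}(1-t)$ and $X[i+1]\le 1-\tfrac{\pi(i)}{\pi(i+1)}t$, which do not link $X[i-1]$ to $X[i+1]$. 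Every other defining inequality, once $X[i]$ is frozen, involves only coordinates indexed below $i$ or only coordinates indexed above $i$. Hence the slice is a product $L_t\times R_t$, with $L_t$ a polytope in $(X[1],\dots,X[i-1])$ and $R_t$ a polytope in $(X[i+1],\dots,X[n-1])$, so $\phi_i(t)=\mathrm{vol}(L_t)\,\mathrm{vol}(R_t)$.

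Next, $t$ appears in the description of $L_t$ only through the upper bound $\tfrac{\pi(i)}{\pi(i-1)}(1-t)$ on $X[i-1]$, and in that of $R_t$ only through the upper bound $1-\tfrac{\pi(i)}{\pi(i+1)}t$ on $X[i+1]$; both are non-increasing in $t$, so $t\le t'$ implies $L_{t'}\subseteq L_t$ and $R_{t'}\subseteq R_t$. Therefore $\mathrm{vol}(L_t)$ and $\mathrm{vol}(R_t)$ are non-negative and non-increasing in $t$, hence so is $\phi_i$, which proves the unconditional statement. For the conditional version, condition on $\mathcal{F}_{i-k_1,\,i+k_2,\,u,\,v}$: the same decoupling applied at the indices $i-k_1$ and $i+k_2$ (equivalently, the Markov property of Theorem~\ref{BdThmMarkovProp}) shows that the coordinates $X[i-k_1+1],\dots,X[i+k_2-1]$ are, conditionally, uniform on the polytope obtained by freezing $X[i-k_1]=u$ and $X[i+k_2]=v$, and independent of the remaining coordinates. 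Running the slice-and-product argument inside this middle polytope, with $u$ and $v$ now in the roles previously played by the boundary values $X[0]=0$ and $X[n]=0$, again exhibits the conditional density of $B[i]$ as a product of two non-negative non-increasing functions of $t$, which finishes the proof.

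The one step needing care, and which I regard as the heart of the matter, is the decoupling: one must check that freezing $X[i]=t$ truly separates the constraint system---in particular that the single inequality indexed by $i$ breaks cleanly into one constraint living in the left block and one in the right block, with no residual coupling---and that the $t$-dependence within each block is monotone, so that the two families of polytopes are genuinely nested. Granting that, the remaining volume estimates are routine.
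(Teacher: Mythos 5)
Your proof is correct, but it takes a different route from the paper's. The paper's argument is a one-line reflection: for $X$ with $X[i]=\frac{a+b}{2}+\epsilon$, define $\widehat{X}$ by flipping the $i$-th coordinate to $\frac{a+b}{2}-\epsilon$ and leaving the rest alone; since every defining constraint involving $X[i]$ is an upper bound on a nonnegative combination, decreasing $X[i]$ preserves membership in $\mathcal{B}_\pi$, so the hat map is a measure-preserving injection from $\{B[i]\in(\frac{a+b}{2},b)\}$ into $\{B[i]\in(a,\frac{a+b}{2})\}$ (coordinatewise, on each slice), and the same map works verbatim under the conditioning. You instead establish the stronger statement that the marginal (and conditional marginal) density of $B[i]$ is non-increasing, by decomposing the slice $\{X[i]=t\}$ as a product $L_t\times R_t$ of left and right polytopes that are nested decreasing in $t$. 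The decoupling step you flag as delicate does go through: every constraint of $\mathcal{B}_\pi$ couples only adjacent coordinates, so freezing $X[i]=t$ leaves one constraint on $X[i-1]$ alone, one on $X[i+1]$ alone, and the rest entirely within one block; and both $t$-dependent bounds, $X[i-1]\le\frac{\pi(i)}{\pi(i-1)}(1-t)$ and $X[i+1]\le 1-\frac{\pi(i)}{\pi(i+1)}t$, are indeed non-increasing in $t$. In truth the two arguments rest on the same observation (all constraints involving $X[i]$ relax as $X[i]$ decreases, hence $S_{t'}\subseteq S_t$ for $t'\ge t$), and the product structure is not strictly needed for nestedness; but your formulation buys something the reflection does not state explicitly, namely monotonicity of the conditional density of $B[i]$ given its neighbours, which is exactly the concavity hypothesis of Lemma \ref{BdLemmaConcLipMin} that the paper later invokes without separate proof. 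The only cosmetic caveat is that conditioning on exact values $B[i-k_1]=u$, $B[i+k_2]=v$ is a measure-zero event and should be read via regular conditional distributions; both your argument and the paper's gloss over this in the same standard way.
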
 
 To prove this, for $X \in \mathcal{B}_{\pi}$ with $X[i] = \frac{a+b}{2} + \epsilon$ and $0 < \epsilon < \frac{a+b}{2}$, define $\widehat{X}$ by $\widehat{X}[i] = \frac{a+b}{2} - \epsilon$ and $\widehat{X}[j] = X[j]$ for $j \neq i$. Then $\widehat{X} \in \mathcal{B}_{\pi}$. The map $X \rightarrow \widehat{X}$ is clearly measure-preserving, and this proves the inequality with no conditioning. The proof when conditioned on the particular values of other entries is identical - the `hat' map $X \rightarrow \widehat{X}$ is a measure-preserving map from $\{ B \in \mathcal{B}_{\pi} \, : \, B[i] \in (a, \frac{a+b}{2}), B[i-k_{1}] = v_{1},B[i + k_{2}] = v_{2} \}$ to $\{ B \in \mathcal{B}_{\pi} \, : \, B[i] \in (\frac{a+b}{2},b), B[i-k_{1}] = v_{1},B[i + k_{2}] = v_{2} \}$ . $\square$ \par 

\begin{cor} For $X$ chosen uniformly in $\mathcal{B}_{\pi}$, $P[X[i] \geq x \min(1, \frac{\pi(i+1)}{\pi(i)}) ] \leq (1-x)$ for all $x \in [0,1]$. Again, this remains true when conditioned on the particular values of any collection of other entries.
\end{cor}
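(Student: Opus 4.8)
The plan is to use the fact that $\mathcal{B}_{\pi}$ is an interval bundle in each coordinate direction. Fix $i$ and condition on the values of all other coordinates $X[j]$, $j \neq i$. Among the defining inequalities of $\mathcal{B}_{\pi}$, exactly four involve $X[i]$: its own two upper bounds $X[i] \leq 1 - \frac{\pi(i-1)}{\pi(i)} X[i-1]$ and $X[i] \leq \frac{\pi(i+1)}{\pi(i)}(1 - X[i+1])$; the upper bound on $X[i-1]$, which rearranges to $X[i] \leq 1 - \frac{\pi(i-1)}{\pi(i)} X[i-1]$; and the upper bound on $X[i+1]$, which rearranges to $X[i] \leq \frac{\pi(i+1)}{\pi(i)}(1 - X[i+1])$. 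All of these collapse to $0 \leq X[i] \leq h$, where $h = \min\left(1 - \frac{\pi(i-1)}{\pi(i)} X[i-1], \frac{\pi(i+1)}{\pi(i)}(1 - X[i+1])\right)$. Since the uniform law on the (full-dimensional) convex body $\mathcal{B}_{\pi}$ has conditional laws equal to normalized Lebesgue measure on the corresponding fibers, the conditional law of $X[i]$ given $(X[j])_{j \neq i}$ is uniform on $[0, h]$.

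The second ingredient is the trivial bound $h \leq M := \min\left(1, \frac{\pi(i+1)}{\pi(i)}\right)$, which holds because $X[i-1], X[i+1] \geq 0$. Thus for $0 \leq t \leq M$ one has $P[X[i] < t \mid (X[j])_{j \neq i}] = \min(t/h, 1) \geq t/M$, and averaging gives $P[X[i] < t] \geq t/M$; taking $t = xM$ yields $P[X[i] \geq xM] \leq 1 - x$. The conditional version follows verbatim: conditioning additionally on every remaining coordinate does not change the conditional law of $X[i]$, so the pointwise estimate $P[X[i] < xM \mid \text{all other coordinates}] \geq x$ still holds, and averaging over the non-prescribed coordinates conditioned on the prescribed ones preserves it.

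This is essentially a restatement of Lemma \ref{LemmaBdLargenessGibbs}: that lemma says the cdf $G(t) = P[X[i] \leq t]$ is midpoint-concave, and since $X[i]$ has a continuous (piecewise polynomial) density, $G$ is concave with $G(0) = 0$ and $G(M) = 1$, which forces $G(xM) \geq x$. I would nonetheless give the direct fiber argument above, since it is self-contained and avoids invoking regularity of midpoint-concave functions. The only point requiring care is the bookkeeping in the first step — verifying that no other defining inequality of $\mathcal{B}_{\pi}$ involves $X[i]$, so that the fiber is genuinely the interval $[0,h]$ and the conditional law is uniform; everything else is immediate.
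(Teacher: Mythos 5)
Your proof is correct, but it takes a more direct route than the paper, which states this corollary with no separate argument and intends it to follow from Lemma \ref{LemmaBdLargenessGibbs}. The paper's implicit derivation runs through the halving property: the lemma says the cdf $G$ of $X[i]$ is midpoint-concave, and since $G$ is supported on $[0,M]$ with $M=\min(1,\pi(i+1)/\pi(i))$ (both upper bounds in the definition of $\mathcal{B}_{\pi}$ force $X[i]\leq M$), concavity plus $G(0)=0$, $G(M)=1$ gives $G(xM)\geq x$. As you note, getting from midpoint-concavity to full concavity for arbitrary $x$ (rather than dyadic $x$, which lose a factor of $2$) needs a small regularity input; your fiber argument sidesteps this entirely. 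Your bookkeeping is right: the only defining inequalities of $\mathcal{B}_{\pi}$ involving $X[i]$ are the bound indexed by $i$ itself and the bounds indexed by $i\pm 1$, and all of them rearrange to $0\leq X[i]\leq h$ with $h\leq M$, so the conditional law is uniform on $[0,h]$ and $P[X[i]<t\mid\text{rest}]=\min(t/h,1)\geq t/M$, which averages to the claim. What the paper's symmetrization route buys is reusability -- the same reflection map proves the halving property under partial conditionings without ever computing the fiber -- but for this particular corollary your computation is cleaner, self-contained, and actually yields the conditional statement in its strongest form (conditioning on \emph{all} other entries), from which the version with any subset of entries follows by averaging exactly as you say.
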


By Theorem \ref{BdThmMarkovProp}, the conditioning result in Lemma \ref{LemmaBdLargenessGibbs} is quite general. \par 

Next, we bound the distribution from below in terms of uniform random variables on $[0,1]$:

\begin{lemma}[Smallness] \label{LemmaBdSmallness}
Let $X$ be chosen uniformly from $\mathcal{B}_{\pi}$, and let $\mathcal{F}_{i,j,u,v}$ be the event that $X[i] = u$ and $X[j] = v$. Then for any $D>3$, and fixed $a,b \in [0,1]$, 
\begin{equation*}
P \left[ X[i+2] < \frac{1}{D} \, \Bigg| \, \mathcal{F}_{i,i+4,a,b}  \right] \leq \frac{16}{D} \min \left( 1, \frac{\pi(i+3)}{\pi(i+2)} \right)
\end{equation*}
Furthermore, if $\pi(i+2) \leq \pi(i+3)$, then
\begin{equation*}
P \left[ X[i+2] < \frac{1}{D} \, \Bigg| \, \mathcal{F}_{i,i+4,a,b} \right] \leq \frac{3}{D-1}
\end{equation*}
\end{lemma}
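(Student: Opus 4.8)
The plan is to estimate the conditional density of $X[i+2]$ directly, using the explicit integral formulas for the unnormalized density that appear in the proof of Theorem \ref{BdThmMarkovProp}. First I would invoke Theorem \ref{BdThmMarkovProp}: conditioning on $\mathcal{F}_{i,i+4,a,b}$ (that is, on $X[i]=a$ and $X[i+4]=b$) reduces the relevant conditional law of $X[i+2]$ to a law depending only on $a$ and $b$ through the $f$-type integrals restricted to the three intermediate coordinates $X[i+1], X[i+2], X[i+3]$. Concretely, writing the joint (unnormalized) density of $(X[i+1],X[i+2],X[i+3])$ given the two boundary values, the density of $X[i+2]=x$ is proportional to
\begin{equation*}
h(x) = \mathrm{Leb}\{X[i+1] : 0 \le X[i+1] \le \min(1 - \tfrac{\pi(i-1)}{\pi(i)}a, \tfrac{\pi(i+1)}{\pi(i)}(1-x))\} \cdot \mathrm{Leb}\{X[i+3] : 0 \le X[i+3] \le \min(1 - \tfrac{\pi(i+2)}{\pi(i+3)}x, \tfrac{\pi(i+4)}{\pi(i+3)}, \ldots)\},
\end{equation*}
subject also to the coupling constraints linking $X[i+3]$ to $b=X[i+4]$; the point is that $h$ is a product of two piecewise-linear functions of $x$, each of which is bounded above by $1$ (being the length of a subinterval of $[0,1]$) and each of which, near $x=0$, is bounded below by a quantity of order $\min(1,\pi(i+1)/\pi(i))$ and $\min(1,\pi(i+3)/\pi(i+2))$ respectively.

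Second I would bound the conditional probability as $P[X[i+2] < 1/D \mid \mathcal{F}] = \int_0^{1/D} h(x)\,dx \big/ \int_0^{M} h(x)\,dx$, where $M = \min(1, \pi(i+3)/\pi(i+2), \ldots)$ is the upper endpoint of the support. The numerator is at most $1/D$ times the maximum of $h$, which is at most $1$. For the denominator I would use Lemma \ref{LemmaBdLargenessGibbs} (Largeness) together with its Corollary: the Corollary says $X[i+2]$, even conditioned on other entries, is not concentrated near the top of its range, so a definite fraction of the mass of $h$ lies in the lower half of the support; combined with the Largeness symmetrization (the "hat map" reflecting $X[i+2]$ about the midpoint of any subinterval) this forces $\int_0^M h(x)\,dx$ to be at least a constant multiple of $M$ — more precisely, at least $M/2$ times a lower bound on $h$ over the bottom portion of $[0,M]$. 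Chasing the constants through, with $M \ge \tfrac{1}{2}\min(1,\pi(i+3)/\pi(i+2))$ and the lower bound on $h$, yields the clean bound $\tfrac{16}{D}\min(1,\pi(i+3)/\pi(i+2))$. The factor $16$ is deliberately generous to absorb all the $\tfrac{1}{2}$'s and the effect of the boundary truncation by $a$ and $b$.

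For the improved bound when $\pi(i+2) \le \pi(i+3)$: in that regime $\min(1,\pi(i+3)/\pi(i+2)) = 1$, so the first bound only gives $16/D$, and we need to save the extra factor. Here I would argue more carefully that when $\pi(i+2)\le \pi(i+3)$ the support $M$ of $X[i+2]$ is in fact close to $1$ (the binding constraint $X[i+2] \le \pi(i+3)/\pi(i+2)(1 - X[i+3])$ is nearly vacuous), and that $h(x)$ is within a bounded factor of a constant over most of $[0,1]$; then $P[X[i+2] < 1/D \mid \mathcal{F}]$ is essentially $\tfrac{1/D}{1} $ up to the distortion, which I would pin down as $\tfrac{3}{D-1}$ by bounding $h$ above and below on $[0,1]$ and $[1/D, 1-1/D]$ respectively, with the $D-1$ coming from excising the top strip of width $1/D$ from the denominator.

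The main obstacle I anticipate is not any single estimate but the bookkeeping: $h(x)$ is a product of two $\min$'s of affine functions, so it is piecewise linear with up to three pieces on $[0,M]$, and one must handle the cases according to which term in each $\min$ is active — in particular whether the constraints from $a$ and from $b$ genuinely truncate the support or not. The cleanest route is probably to prove a single clean two-sided bound $c_1 \le h(x)/\mathrm{const} \le c_2$ valid on a sub-interval of definite length near $0$, with $c_1, c_2$ depending only on $\min(1,\pi(i+1)/\pi(i))$ and $\min(1,\pi(i+3)/\pi(i+2))$, and never track the exact breakpoints; the Largeness Corollary is what makes this possible because it rules out the worst case (all mass near the top) without any case analysis.
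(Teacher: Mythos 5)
Your starting point coincides with the paper's: conditionally on $X[i]=a$ and $X[i+4]=b$, the density of $X[i+2]$ at $x$ is proportional to $h(x)=L(x)U(x)$, the product of the lengths of the admissible intervals for $X[i+1]$ and $X[i+3]$; the paper's ratios $R_1=L(x_2)/L(x_1)$ and $R_2=U(x_2)/U(x_1)$ are exactly what your plan requires, and your ``numerator over $[0,1/D]$, denominator over the support'' is a continuous version of the paper's summation of $P[X[i+2]\in[r\alpha,(r+1)\alpha]]$ over $r$. But two steps do not work as proposed. First, the Largeness lemma and its corollary point the wrong way for your denominator: they say the conditional law puts \emph{more} mass near $0$ than near the top of its range (equivalently $P[X[i+2]<xM]\ge x$), which is an upper bound on the upper tail and, if anything, makes Smallness harder to prove; they cannot lower-bound $\int_0^M h$ relative to $\sup_{[0,1/D]}h$. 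Since $h$ is nonincreasing, the ingredient you actually need is a lower bound on the decay ratio $h(x)/h(0)$ for $x$ up to a definite fraction of the \emph{support} $M=\min\bigl(1,\pi(i+3)/\pi(i+2)\bigr)$ (not of $[0,1]$), and this is precisely what the paper extracts from the piecewise-linear structure via its three-case analyses of $R_1$ and $R_2$, giving $h(x)/h(0)\ge(1-x)\bigl(1-\tfrac{\pi(i+2)}{\pi(i+3)}x\bigr)$. Your proposed ``two-sided bound on a sub-interval of definite length near $0$'' is the correct substitute, but it must come from that case analysis; note also that absolute lower bounds on $L$ and $U$ near $0$ of the kind you assert can fail (the truncation by $a$ or $b$ can make either factor uniformly tiny), which is harmless only because everything is phrased as a ratio.

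Second, your estimates do not land on the displayed bound, and you should not absorb constants to force them to. With numerator $\le h(0)/D$ and denominator $\ge \tfrac{M}{2}\,\inf_{[0,M/2]}h\ge \tfrac{M}{2}\cdot\tfrac{1}{4}h(0)$, the quotient is of order $\tfrac{1}{DM}$, i.e.\ the factor $\min\bigl(1,\pi(i+3)/\pi(i+2)\bigr)$ lands in the \emph{denominator}. That is also what the paper's own summation actually produces, and it is forced by a sanity check: if $\pi(i+3)/\pi(i+2)<1/D$ then the support of $X[i+2]$ lies inside $[0,1/D)$, the conditional probability equals $1$, and no bound proportional to $\min\bigl(1,\pi(i+3)/\pi(i+2)\bigr)$ can hold. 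So the placement of the $\min$ in the statement should be treated as an error to correct, not a target to reach. Your sketch of the refined $\tfrac{3}{D-1}$ bound when $\pi(i+2)\le\pi(i+3)$ is in the right spirit; the paper obtains it by integrating the quadratic lower bound $(1-x)^2$ for $h(x)/h(0)$ over the full support $[0,1]$.
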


Let $\alpha = \frac{1}{D}$, fix some $r>0$ to be determined later, and choose $x_{1}$ and $x_{2}$ satisfying $0 \leq x_{1} \leq \alpha$ and $r \alpha \leq x_{2} \leq (r+1) \alpha$. Then define
\begin{equation*}
L(x) = \min \left( 1 - \frac{\pi(i)}{\pi(i+1)} a, \frac{\pi(i+2)}{\pi(i+1)}(1 - x) \right)
\end{equation*}
and $R_{1} = \frac{L(x_{2})}{L(x_{1})} \leq 1$. There are three cases to consider to bound $R_{1}$. \\
\textbf{Case 1:}
\begin{equation*}
1 - \frac{\pi(i)}{\pi(i+1)} a \leq \frac{\pi(i+2)}{\pi(i+1)} (1 - x_{2}) \leq \frac{\pi(i+2)}{\pi(i+1)} (1 - x_{1})
\end{equation*}
In this case, $R_{1} = 1$. \\
\textbf{Case 2:}
\begin{equation*}
 \frac{\pi(i+2)}{\pi(i+1)} (1 - x_{2}) \leq \frac{\pi(i+2)}{\pi(i+1)} (1 - x_{1}) \leq 1 - \frac{\pi(i)}{\pi(i+1)} a 
\end{equation*}
In this case,
\begin{align*}
R_{1} &= \frac{1- x_{2}}{1 - x_{1}} \\
& \geq 1 - (r+1) \alpha 
\end{align*}
\textbf{Case 3:}
\begin{equation*}
 \frac{\pi(i+2)}{\pi(i+1)} (1 - x_{2})  \leq 1 - \frac{\pi(i)}{\pi(i+1)} a \leq \frac{\pi(i+2)}{\pi(i+1)} (1 - x_{1})
\end{equation*}
In which case
\begin{align*}
R_{1} &= \frac{\frac{\pi(i+2)}{\pi(i+1)} (1 - x_{2})}{1 - \frac{\pi(i)}{\pi(i+1)} a} \\
&\geq \frac{1 - x_{2}}{1 - x_{1}} \\
& \geq 1 - (r+1)\alpha 
\end{align*}
Thus, in all three cases, $R_{1} \geq 1 - (r+1)\alpha$. Next, define
\begin{equation*}
U(x) = \min(1 - \frac{\pi(i+2)}{\pi(i+3)}b, \frac{\pi(i+4)}{\pi(i+3)} (1 - x))
\end{equation*}
and define $R_{2} = \frac{U(x_{2})}{U(x_{1})} \leq 1$. Again, there are three cases. \\
\textbf{Case 1:}
\begin{equation*}
\frac{\pi(i+4)}{\pi(i+3)} (1 - b) \leq 1 - \frac{\pi(i+2)}{\pi(i+3)} x_{2} \leq 1 - \frac{\pi(i+2)}{\pi(i+3)} x_{1}
\end{equation*}
In this case, $R_{2} = 1$. \\
\textbf{Case 2:}
\begin{equation*}
 1 - \frac{\pi(i+2)}{\pi(i+3)} x_{2} \leq 1 - \frac{\pi(i+2)}{\pi(i+3)} x_{1} \leq \frac{\pi(i+4)}{\pi(i+3)} (1 - b) 
\end{equation*}
In this case,
\begin{align*}
R_{2} &= \frac{1 - \frac{\pi(i+2)}{\pi(i+3)} x_{2}}{1 - \frac{\pi(i+2)}{\pi(i+3)} x_{1}} \\
&\geq 1 - \frac{\pi(i+2)}{\pi(i+3)} (r+1) \alpha 
\end{align*}
\textbf{Case 3:}
\begin{equation*}
 1 - \frac{\pi(i+2)}{\pi(i+3)} x_{2} \leq \frac{\pi(i+4)}{\pi(i+3)} (1 - b)  \leq 1 - \frac{\pi(i+2)}{\pi(i+3)} x_{1} 
\end{equation*}
In this case,
\begin{align*}
R_{2} &= \frac{1 - \frac{\pi(i+2)}{\pi(i+3)} x_{2}}{\frac{\pi(i+4)}{\pi(i+3)} (1 - b)} \\
&\geq \frac{1 - \frac{\pi(i+2)}{\pi(i+3)} x_{2}}{1 - \frac{\pi(i+2)}{\pi(i+3)} x_{1}} \\
&\geq 1 - \frac{\pi(i+2)}{\pi(i+3)} (r+1) \alpha 
\end{align*}
So again, in all three cases, $R_{2} \geq 1 - \frac{\pi(i+2)}{\pi(i+3)} (r+1) \alpha $. \par 
Next, note that $R_{1}$ gives the ratio of the length of the interval of allowed values of $X[i+1]$ given $X[i] = a$ and $X[i+2] = x_{2}$ to the length of the interval of allowed values values of $X[i+1]$ given $X[i] = a$ and $X[i+2] = x_{1}$. Similarly, $R_{2}$ gives the ratio of the length of the interval of allowed values values of $X[i+3]$ given $X[i+4] = b$ and $X[i+2] = x_{2}$ to the length of the interval of allowed values values of $X[i+3]$ given $X[i+4] = b$ and $X[i+2] = x_{1}$. The product of these two ratios is at least $(1 - (r+1) \alpha)( 1 - \frac{\pi(i+2)}{\pi(i+3)} (r+1) \alpha )$. Thus, it is possible to construct a map $\phi = (\phi_{1}, \phi_{2}, \phi_{3})$ from the set of triples $(X[i+1], X[i+2], X[i+3])$ with $X[i+2] = x_{2}$ to the set of triples with $X[i+2] = x_{1}$ which is linear in the first and third coordinates, and where the product of the slopes of the two linear parts are at most $(1 - (r+1) \alpha)( 1 - \frac{\pi(i+2)}{\pi(i+3)} (r+1) \alpha )$. This implies that 

\begin{equation} \label{IneqSmallnessUtility}
\frac{P[X[i+2] \in [r \alpha, (r+1)\alpha] \big| \mathcal{F}_{i,i+4,a,b}]}{ P[X[i+2] \leq \alpha \big| \mathcal{F}_{i,i+4,a,b}]} \geq (1 - (r+1) \alpha)( 1 - \frac{\pi(i+2)}{\pi(i+3)} (r+1) \alpha )
\end{equation}

Assume first that $\pi(i+3) < \pi(i+2)$. Then:
\begin{equation*}
P[X[i+2] \geq \alpha \big| \mathcal{F}_{i,i+4,a,b}]  \geq \sum_{r=1}^{\frac{\pi(i+3)}{\pi(i+2) \alpha} - 1} P[X[i+2] \in [r \alpha, (r+1) \alpha] \big| \mathcal{F}_{i,i+4,a,b}] \\
\end{equation*}
Using inequality \eqref{IneqSmallnessUtility} relating $P[X[i+2] \in [r \alpha, (r+1) \alpha] \big| \mathcal{F}_{i,i+4,a,b}]$ to $P[X[i+2] \leq \alpha \big| \mathcal{F}_{i,i+4,a,b}]$,  summing only the first half of the terms, and bounding each of those terms by their minimum results in the bound
\begin{equation*}
P[X[i+2] \geq \alpha \big| \mathcal{F}_{i,i+4,a,b}]  \geq \frac{4}{\lfloor \frac{1}{2 \alpha} \frac{\pi(i+3)}{\pi(i+2)} - 1 \rfloor} P[X[i+2] \leq \alpha \big| \mathcal{F}_{i,i+4,a,b}]
\end{equation*}

This proves the first inequality for $\pi(i+2) > \pi(i+3)$. \par 
If, however, $\pi(i+3) > \pi(i+2)$, $P[X[i] \in [r \alpha, (r+1)\alpha] \big| \mathcal{F}_{i,i+4,a,b}] \geq (1 - ra)^{2} P[X[i+2] \leq \alpha\big| \mathcal{F}_{i,i+4,a,b}]$, and the above calculation can be made a little more careful: 
\begin{align*}
P[X[i+2] \geq \alpha\big| \mathcal{F}_{i,i+4,a,b}] &\geq \sum_{r=1}^{\alpha^{-1} - 1} P[X[i+2] \in [r\alpha, (r+1)\alpha] \big| \mathcal{F}_{i,i+4,a,b}] \\
&\geq \sum_{r=1}^{\alpha^{-1}-1} (1 - r\alpha)^{2} P[X[i+2] \leq \alpha \big| \mathcal{F}_{i,i+4,a,b}] \\
&\geq P[X[i+2] \leq \alpha \big| \mathcal{F}_{i,i+4,a,b}] \sum_{r=1}^{\alpha^{-1} - 1} (r\alpha)^{2} \\
&\geq P[X[i+2] \leq \alpha \big| \mathcal{F}_{i,i+4,a,b}] \left( \frac{1}{3\alpha} - \frac{1}{3} \right)
\end{align*}
So in fact $P[X[i] \leq \alpha] \leq 3 \frac{\alpha}{1-\alpha}$. $\square$ \\

We note that, for $a \in [\frac{1}{2}, 1]$, very similar calculations show:

\begin{equation} \label{EqAlmostSmallness}
P \left[ X[i+1] < \frac{1}{D} \, \big| \, \mathcal{F}_{i,i+3,a,b} \right] = O\left( \frac{1}{D} \min \left( 1, \frac{\pi(i+3)}{\pi(i+2)} \right) \right)
\end{equation}

We are finally ready to prove Lemma \ref{BdLmGeneralSdMixing}. By lemmas \ref{LemmaBdLargenessGibbs} and \ref{LemmaBdSmallness}, for $B$ chosen uniformly from $\mathcal{B}_{\pi}$, the distribution $P[X[i+2] \leq x \vert X[i] = a, X[i+4] = b]$ satisfies the conditions of Lemma \ref{BdLemmaConcLipMin} with constant $C = C(i)$. 
Next, fix $1 \leq i \leq j \leq n$ with $i + j \leq n + 2$. Let $Z$ and $Q$ be chosen from $\mathcal{B}_{\pi}$ chosen uniformly conditioned on $Q[j] = b_{1}$, $Z[j] = b_{2}$ and $Z[i] = Q[i] = a$. We will apply the results of Lemma \ref{BdLemmaConcLipMin} to successive blocks of size two. That is, by Theorem \ref{BdEqMarkovChain}, we can view $\{ Z[\ell] \}_{\ell = i}^{j}$ and $\{ Q[\ell] \}_{\ell = i}^{j}$ as Markov chains, and so $\{ Z[2 \ell] \}_{i \leq 2\ell \leq j}$ and $\{ Q[\ell] \}_{i \leq 2\ell \leq j}$ are also Markov chains. We will try to force these two chains to coallesce. Since they are Markov chains, if $\tau$ is the (random) coallescence time, we have by Lemma \ref{LemmaFundCoup}
\begin{equation*}
\vert \vert \mathcal{L}( Z[i+2 \ell]) - \mathcal{L}( Q[i+2 \ell]) \vert \vert_{TV} \leq P[\tau > \ell]
\end{equation*}

The minorization described in Lemma \ref{BdLemmaConcLipMin} implicitly describes a one-step coupling of the two chains, which is the coupling we use. The lemma bounds the probability that they coallesce in each step. Since $\tau > \ell$ only if coallescence fails at each of the first $\ell$ steps, this proves the lemma.  $\square$. \par 

As an aside, Theorem $6.1.v$ of \cite{DiWo10} gives a much better mixing estimate in the case that $\pi$ is uniform. Their estimate is based on finding all of the eigenvalues of the limiting Markov kernel as $n$ goes to $\infty$, which is not practical in the general case. Lemma 20 of \cite{Smit12a} gives intermediate bounds in the case that $\pi$ is monotone but not uniform.  \par 

We will use the sub-diagonal representation of tri-diagonal matrices which is analogous to the representation in equation \eqref{EqSupDiagRep} to find a different improvement. Note that $R(\ell) \geq \frac{4}{27}$ if $\pi(\ell + 3) \geq \pi(\ell + 2)$. An analogous bound for mixing of the sub-diagonal entries holds, with $R(\ell) \geq \frac{4}{27}$ if $\pi(\ell + 3) \leq \pi(\ell + 2)$. This leads to the following corollary:

\begin{cor} \label{BdCorGenMixing}

Let $Z$ and $Q$ be as in Lemma \ref{BdLmGeneralSdMixing}. Then
\begin{equation*}
\vert \vert \mathcal{L}(Z[i+4 \ell]) - \mathcal{L}(Q[i+4 \ell]) \vert \vert_{TV} \leq \left( 1 - \frac{4}{27} \right)^{\ell}
\end{equation*}
\end{cor}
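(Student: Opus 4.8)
The plan is to combine Lemma~\ref{BdLmGeneralSdMixing} with the sub-diagonal analogue noted above, using the fact that at \emph{every} coordinate one of the two gives the uniform contraction factor $1-\frac{4}{27}$. First I would make precise the sense in which the sub-diagonal version bounds the same quantity. The sub-diagonal parameterization of $\mathcal{A}_{\pi}$ is obtained from the super-diagonal one of \eqref{EqSupDiagRep} by the deterministic coordinatewise bijection $d_{\ell} = \frac{\pi(\ell)}{\pi(\ell+1)} c_{\ell}$; hence for any index $m$ one has $\| \mathcal{L}(d_{Z}[m]) - \mathcal{L}(d_{Q}[m]) \|_{TV} = \| \mathcal{L}(Z[m]) - \mathcal{L}(Q[m]) \|_{TV}$, the $\sigma$-algebras generated by the first $m$ super-diagonal and the first $m$ sub-diagonal entries coincide, and consequently the Markov property of Theorem~\ref{BdThmMarkovProp} holds verbatim for the sub-diagonal entries. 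In particular the \emph{size-two} processes $\{ Z[i+2q] \}_{q}$ and $\{ Q[i+2q] \}_{q}$ appearing in the proof of Lemma~\ref{BdLmGeneralSdMixing} are the same random objects whether described in super- or in sub-diagonal coordinates, and rerunning that proof in sub-diagonal coordinates yields a bound of the same shape with contraction factor $1 - \widetilde{R}(\cdot)$, where $\widetilde{R}(\ell) \geq \frac{4}{27}$ whenever $\pi(\ell+3) \leq \pi(\ell+2)$.

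Next I would invoke the dichotomy: for every relevant $\ell$, either $\pi(\ell+3) \geq \pi(\ell+2)$, so that $R(\ell) \geq \frac{4}{27}$ and the minorization of Lemma~\ref{BdLemmaConcLipMin} furnishes (as in the proof of Lemma~\ref{BdLmGeneralSdMixing}) a one-step coupling of the pair of bridge chains under which they coalesce at index $\ell$ with probability at least $\frac{4}{27}$; or $\pi(\ell+3) \leq \pi(\ell+2)$, so that $\widetilde{R}(\ell) \geq \frac{4}{27}$ and the sub-diagonal minorization does the same. I would then splice these one-step couplings: at the $q$-th size-two step, located at index $i+2q$, use whichever of the two representations delivers the $\frac{4}{27}$ bound, and use the identity coupling once the two chains have met. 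This is a legitimate coupling of Markov chains with coalescence absorbing precisely because of the first paragraph: the size-two process is a single Markov bridge, independent of which coordinate labels are used, so a transition built from either minorization is admissible for it and the transitions may be composed freely. Iterating exactly as in the proof of Lemma~\ref{LemmaFundCoup}, the probability that the chains have not coalesced after $m$ steps is at most $(1-\frac{4}{27})^{m}$, whence $\| \mathcal{L}(Z[i+2m]) - \mathcal{L}(Q[i+2m]) \|_{TV} \leq (1-\frac{4}{27})^{m}$.

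Finally, taking $m = 2\ell$ --- admissible whenever $4\ell \leq j-2$, the range inherited from Lemma~\ref{BdLmGeneralSdMixing} --- gives $\| \mathcal{L}(Z[i+4\ell]) - \mathcal{L}(Q[i+4\ell]) \|_{TV} \leq (1-\frac{4}{27})^{2\ell} \leq (1-\frac{4}{27})^{\ell}$, as claimed. The only step that genuinely needs care is the splicing: one must check that alternating between the super- and sub-diagonal one-step couplings still produces a bona fide coupling of Markov chains in which coalescence persists, and this is exactly what the identification of $\sigma$-algebras in the first paragraph provides. Everything else --- the value of $R$ in the sharp regime, the sub-diagonal analogue of Lemma~\ref{LemmaBdSmallness}, and the index range --- is bookkeeping already carried out above.
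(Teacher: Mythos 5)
Your proposal is correct and rests on the same core idea as the paper's sketch: at every position the super-diagonal minorization or its sub-diagonal (reversed-chain) analogue has success probability at least $\frac{4}{27}$, and one splices whichever works into a Markovian coupling of the two bridges. Where you differ is in the granularity of the splicing. The paper deliberately attempts only one coalescence per block of four indices, stating that the extra length is needed ``to allow space to switch between the coupling in the super- and sub-diagonal representations, as the $1$-step couplings are different''; you instead switch at every size-two step, justified by the observation that the two parameterizations are related by a coordinatewise deterministic bijection, so that the size-two bridge process, its $\sigma$-algebras, its Markov structure, and the coalescence event are literally the same object in either coordinate system, and any valid one-step coupling of the common pair of conditional laws may be used at any step. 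That justification is sound --- a Markovian coupling does not require the same transition coupling at every step, only that each step couple the correct conditional distributions --- and it buys you the stronger intermediate bound $\left(1-\frac{4}{27}\right)^{2\ell}$, of which the stated $\left(1-\frac{4}{27}\right)^{\ell}$ is a weakening. The one point you leave implicit, and which is presumably the author's reason for the cushion of four, is directionality: the sub-diagonal minorization arises from the reversed chain and so is most naturally a statement about the backward transition kernel. It does transfer to the forward kernel, because Lemma \ref{LemmaBdSmallness} conditions on \emph{both} neighbors and the forward one-step kernel of the bridge is a mixture of these doubly-conditioned laws with a common dominating measure; a sentence making that explicit would close the only visible seam in your argument.
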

\par 

We briefly sketch the argument. We will look at the Markov chains $\{ Z[i + 4 \ell] \}_{0 \leq 4 \ell \leq j - i}$ and $\{ Q[i + 4 \ell] \}_{0 \leq 4 \ell \leq j - i}$. By symmetry the bound in Lemma \ref{BdLmGeneralSdMixing} also applies to the parameterization of $\mathcal{A}_{\pi}$ by subdiagonal entries, with a term of the form $16 \min(1, \frac{\pi(\ell)}{\pi(\ell + 1)})$ in place of the term $16 \min(1, \frac{\pi(\ell+1)}{\pi(\ell)})$ from Lemma \ref{BdLmGeneralSdMixing}. That is, the ratio of successive terms of $\pi$ is flipped. Thus, in each block of size 4, we can attempt a coallescing step using the one-step coupling described in Lemma \ref{BdLemmaConcLipMin} or the analogous version for sub-diagonal entries. At least one of these has a success probability of at least $\frac{4}{27}$. The blocks are of length 4 rather than 2 to allow space to switch between the coupling in the super- and sub-diagonal representations, as the 1-step couplings are different. $\square$ \par 

With this corollary, we can now prove our contraction estimate for starting distributions $X_{0}, Y_{0}$ differing at the single entry $j$ satisfying $k < j < n - k$. Assume the block $(j+1, \ldots, j+k)$ is being updated at time 0. The method is to choose entries inside the blocks in groups of size four, sequentially conditioned upon the endpoints of the large block. They are coupled as described in the proof of Lemma \ref{BdLmGeneralSdMixing}. As shown in Corollary \ref{BdCorGenMixing}, at each such step, $X_{1}$ and $Y_{1}$ couple with probability at least $\frac{4}{27}$. Thus, the expected increase in distance is at most $\sum_{i} 4 (\frac{23}{27})^{i} = 27$, uniformly in $k$. Under this coupling, then, 
\begin{equation} \label{IneqMiddleContract}
 E[d(X_{1}, Y_{1}) \vert d(X_{0}, Y_{0}) = 1, X_{0}[j] \neq Y_{0}[j]] \leq 1 - \frac{1}{n + 2w - 2}(k - 54)
 \end{equation}

for $k<j<n-k +1$. By the same argument, for $j \leq k$ or $j \geq n - k +1$, we have
\begin{equation} \label{IneqEdgeContract}
 E[d(X_{1}, Y_{1}) \vert d(X_{0}, Y_{0}) = 1, X_{0}[j] \neq Y_{0}[j]] \leq 1 - \frac{1}{n + 2w - 2}(w - 54)
 \end{equation}

Choosing $k = w = 55$, by inequalities \eqref{IneqMiddleContract} and \eqref{IneqEdgeContract} we have for any $1 \leq j \leq n$

\begin{equation} \label{IneqEdgeContract}
 E[d(X_{1}, Y_{1}) \vert d(X_{0}, Y_{0}) = 1, X_{0}[j] \neq Y_{0}[j]] \leq 1 - \frac{1}{n + 108}
 \end{equation}

And so for $n \geq 108$, by Theorem \ref{BdThmPathCoup} above,
\begin{equation*}
E[d(X_{t}, Y_{t})] \leq n \left(1 - \frac{1}{2n} \right)^{t}
\end{equation*}
By Lemma \ref{LemmaFundCoup} and then Markov's inequality,
\begin{align*}
\vert \vert \mathcal{L}(X_{t}) - U \vert \vert_{TV} &\leq P[d(X_{t}, Y_{t}) \geq 1] \\
&\leq 2 n \left(1 - \frac{1}{2n} \right)^{t}
\end{align*}
proving the upper bound. \par 
The lower bound follows from the usual `coupon collector' problem. Since our walk is over a continuous space, the total variation distance to stationarity of the Markov chain at time $t$ must be at least the probability that not all coordinates have been updated by time $t$. Since only $k$ coordinates are updated at a time, the classical coupon-collector results in \cite{ErRe61} tell us that at time $T = \frac{1}{k} n (\log n - c)$, $\sup_{A \in \Sigma} \vert K_{n}^{T}(x, A) - \pi(A) \vert \geq 1 - \exp(-\exp(c)) + o(1)$ as $n$ goes to infinity. $\square$ \par

\section{Cutoff Preliminaries} \label{SecCutoffBasics}

We now return to the main problem, determining when sequences of random chains exhibit cutoff. The proof will rely on results in Section \ref{SecGibbs1} and their analogues in Section 6 of \cite{DiWo10}. The approach makes heavy use of coupling in order to apply the results of \cite{DiWo10} to much less symmetric objects. Both \cite{DiWo10} and this paper heavily rely on Theorem \ref{BdThmCutoffCrit} above, which states that there is cutoff for a sequence of birth and death chains if and only if the product of the mixing time and spectral gap goes to infinity. So, the problem reduces to finding good bounds on the mixing time and spectral gap of these chains. \par 

We start by recalling a few results giving good estimates of the mixing time and spectral gap for birth and death chains. Let $m$ be the median of $\pi$; for $\pi$ symmetric this will be $\frac{n}{2}$. From \cite{Micl99}, the spectral gap $(1 - \lambda_{n}(K))$ for a BD chain with transition kernel $K$ satisfies
\begin{equation*}
\frac{1}{4B} \leq (1 - \lambda_{n}(K)) \leq \frac{2}{B}
\end{equation*}
where $B = \max ( B_{+}(m), B_{-}(m) )$ and
\begin{align} \label{EqSpectralGapBounders}
B_{+}(m) &= \max_{x > m} \left(\sum_{y = m+1}^{x} \frac{1}{\pi(y) K(y,y-1)}\right)\sum_{y>x} \pi(y) \\
&= \max_{x > m} \left( \sum_{y = m+1}^{x} \frac{1}{\pi(y-1) K(y-1,y)}\right)\sum_{y>x} \pi(y) \\
B_{-}(m) &= \max_{x < m} \left(\sum_{y = x}^{m-1} \frac{1}{\pi(y) K(y,y+1)}\right)\sum_{y<x} \pi(y) 
\end{align}

The mixing time $\tau_{mix}(\frac{1}{4})$ will be estimated by $\max(E_{0}[T_{m}], E_{n}[T_{m}])$, where $E_{i}[T_{j}]$ is the expected hitting time of $j$ for the birth and death chain started at position $i$. Two results are needed. The first is that there exist universal functions $c_{1}(\delta)$, $c_{2}(\delta)$ so that for quantile functions $m(\delta) = \inf \{k : \sum_{j \leq k} \pi(j) \geq \delta \}$,
\begin{equation} \label{BdIneqHittingMixingRelation}
c_{1}(\delta) \max(E_{0}[T_{m(\delta)}], E_{n}[T_{m(1 - \delta)}]) \leq \tau_{mix}(\frac{1}{4}) \leq c_{2}(\delta) \max(E_{0}[T_{m(\delta)}], E_{n}[T_{m(1 - \delta)}])
\end{equation}
for any $\frac{1}{2} < \delta < 1$. This is an immediate corollary of Theorem 1.1 of \cite{PeSo11}. The inequalities \ref{BdIneqHittingMixingRelation} will be used to find the rough order of the mixing time by looking instead at the hitting times. \par 

The second theorem deals with locating cutoff when it occurs. In \cite{DLP08}, it is shown that if a sequence of chains with mixing profiles $\tau_{n}(\epsilon)$ and median functions $m_{n}(\delta)$ exhibits cutoff, then
\begin{equation} \label{BdEqCutoffLocationEqHittingTime}
\lim_{n \rightarrow \infty} \frac{\tau_{n}(\frac{1}{4})}{\max(E_{0}[T_{m_{n}(\delta)}], E_{n}[T_{m_{n}(1 - \delta)}])} = 1
\end{equation}
This will be used to estimate cutoff location in the few cases it is possible to prove the existence of cutoff. \par 

The other basic result is the following classical formula for expected hitting times (see e.g. \cite{KrSc93} \cite{PaTe96}):
\begin{align} \label{BdEqHittingTime}
E_{j-1}[T_{j}] &= \frac{1}{\pi(j-1) K[j-1,j]} \sum_{q=0}^{j-1} \pi(q) \\
E_{i}[T_{j}] &= \sum_{v=0}^{j-i-1} E_{i+v}[T_{i+v+1}] \\
&= \sum_{v=0}^{j-i-1} \frac{1}{\pi(i+v) K[i+v,i+v+1]} \sum_{q=0}^{i+v} \pi(q)
\end{align}
Thus, estimating both the mixing and relaxation time reduces to estimating the weighted harmonic sums of the super-diagonal entries of the transition matrix found in equations \eqref{EqSpectralGapBounders} and \eqref{BdEqHittingTime}. This requires some information about the marginal distribution of each entry, a theorem showing that these entries are not too dependant on each other, and finally some sort of invariance principle. The information about the marginal distributions is in Lemmas \ref{LemmaBdLargenessGibbs} and \ref{LemmaBdSmallness}, with a more precise bound given by Theorem $6.1.v$ of \cite{DiWo10} for specific chains. The information about mixing is in Lemma \ref{BdLmGeneralSdMixing}, and again Theorem $6.1.i$ of \cite{DiWo10} gives more precise information for specific chains. The invariance principle will be Theorem 3.4 of \cite{BKS10}, which requires some preparation to state.  \par 

Recall that a stochastic process $X_{n}$ is called strictly stationary if $(X_{i_{1}}, X_{i_{2}}, \ldots, X_{i_{k}}) \stackrel{D}{=} (X_{i_{1}+s}, X_{i_{2}+s}, \ldots, X_{i_{k}+s}) $ in distribution for all $s,k \geq 1$ and all $i_{1} < i_{2} < \ldots < i_{k}$. Such a sequence is called \textit{jointly regularly varying} with index $\alpha$ if, for all $k \geq 1$, there exists a random measure $\Theta_{k}$ on the sphere $S^{k-1} = \{ (v_{1}, \ldots, v_{k}) \vert \, \sum_{i=1}^{k} v_{i}^{2} = 1 \}$ so that for all $u \in (0, \infty)$ and measurable $A \subset S^{k-1}$,
\begin{equation*}
\lim_{x \rightarrow \infty} \frac{P[ \vert \vert (X_{1}, \ldots, X_{k}) \vert \vert_{2} > ux, \, \frac{(X_{1}, \ldots, X_{k})}{\vert \vert (X_{1}, \ldots, X_{k}) \vert \vert_{2}} \in A]}{P[\vert \vert X \vert \vert_{2} > x]} = u^{-\alpha} P[\Theta_{k} \in A]
\end{equation*}
This condition is hard to verify directly for the stationary sequence used in this paper. Theorem 2.1 of \cite{BaSe09} gives a useful sufficient condition:
\begin{thm}[Condition for Joint Regular Variation] \label{BdThmSuffCondJointRegVar} 
Let $X_{n}$ be a strictly stationary stochastic process. If there exists another process $Y_{n}$ such that
\begin{itemize}
\item $P[\vert Y_{0} \vert > y] = y^{-\alpha}$ for $y \geq 1$
\item The following limit holds in finite-dimensional distribution:
\begin{equation*}
\lim_{x \rightarrow \infty} \{ (x^{-1} X_{n})_{n \in \mathbb{Z}} \vert \, \, \vert X_{0} \vert > x \} = Y_{n}
\end{equation*}
\end{itemize}
then $X_{n}$ is jointly regularly varying with exponent 1.
\end{thm}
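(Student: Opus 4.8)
The statement is the ``if'' half of the spectral-tail-process characterization of joint regular variation, and the plan is to reconstruct that implication. It suffices to prove, for every $h\ge 0$, that the block $(X_0,\dots,X_h)$ is a multivariate regularly varying random vector of index $\alpha$: joint regular variation of $(X_n)$ demands exactly this of every finite sub-collection $(X_{i_1},\dots,X_{i_k})$, and by strict stationarity such a collection has the law of a sub-vector of $(X_0,X_1,\dots,X_m)$ with $m=i_k-i_1$, while the angular ($\Theta_k$) formulation written in the definition is the standard polar-coordinate reexpression of vague convergence to a homogeneous limit measure (marginalization preserves regular variation here, since every coordinate inherits the index-$\alpha$ tail of $X_0$). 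We first dispose of $h=0$: taking $n=0$ in the hypothesis gives $\{x^{-1}X_0\mid|X_0|>x\}\Rightarrow Y_0$, so for $t\ge 1$, $P[|X_0|>tx]/P[|X_0|>x]=P[x^{-1}|X_0|>t\mid|X_0|>x]\to P[|Y_0|>t]=t^{-\alpha}$, and the range $0<t<1$ follows by inverting; hence $U(x):=P[|X_0|>x]$ is regularly varying at infinity with index $-\alpha$. Fixing $a_n$ with $nU(a_n)\to1$, it remains to show $U(x)^{-1}\,P[x^{-1}(X_0,\dots,X_h)\in\cdot\,]$ converges vaguely on $\overline{\mathbb{R}}^{\,h+1}\setminus\{0\}$ to a nonzero measure homogeneous of order $-\alpha$; equivalently, so does $n\,P[a_n^{-1}(X_0,\dots,X_h)\in\cdot\,]$.

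The engine is a \emph{first-exceedance decomposition}. Fix $h$ and a rectangle $A\subseteq\{v\in\mathbb{R}^{h+1}:\|v\|_2>\delta\}$, and choose $0<\epsilon<\delta/\sqrt{h+1}$. On $\{x^{-1}(X_0,\dots,X_h)\in A\}$ some coordinate exceeds $\epsilon x$ in modulus; let $j=j(x)\in\{0,\dots,h\}$ be the least such index, so the event is the disjoint union of $E_j=\{\,|X_i|\le\epsilon x\ (0\le i<j),\ |X_j|>\epsilon x,\ x^{-1}(X_0,\dots,X_h)\in A\,\}$, $0\le j\le h$. Shifting the time index by $-j$ and using strict stationarity, $P[E_j]=P[\widetilde{E}_j]$ with $\widetilde{E}_j=\{\,|X_i|\le\epsilon x\ (-j\le i<0),\ |X_0|>\epsilon x,\ x^{-1}(X_{-j},\dots,X_{h-j})\in A\,\}$. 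Dividing by $U(x)$ and writing $y=\epsilon x$,
\[
\frac{P[\widetilde{E}_j]}{U(x)}=\frac{U(\epsilon x)}{U(x)}\cdot P\!\left[\,|Y^{(y)}_i|\le1\ (-j\le i<0),\ \epsilon\,(Y^{(y)}_{-j},\dots,Y^{(y)}_{h-j})\in A\,\right],
\]
where $Y^{(y)}_n=y^{-1}X_n$ under $P[\,\cdot\mid|X_0|>y\,]$. As $x\to\infty$, the first factor tends to $\epsilon^{-\alpha}$ by the regular variation of $U$, and the conditional probability tends to $P[\,|Y_i|\le1\ (-j\le i<0),\ \epsilon(Y_{-j},\dots,Y_{h-j})\in A\,]$ by the finite-dimensional convergence $(Y^{(y)}_n)_n\Rightarrow(Y_n)_n$ from the hypothesis (the two-sided tail process is used here, since negative indices appear after the shift). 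Summing over $0\le j\le h$ gives a finite limit, call it $\mu_h(A)$, for $U(x)^{-1}P[x^{-1}(X_0,\dots,X_h)\in A]$, hence also for $n\,P[a_n^{-1}(X_0,\dots,X_h)\in A]$.

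It remains to recognize $\mu_h$ as a genuine limit measure with the needed properties. It does not depend on the auxiliary $\epsilon$ (the quantity passed to the limit never mentions $\epsilon$); it is finite on sets bounded away from the origin, since $\mu_h(\{|v_i|>\eta\})=\lim_x U(x)^{-1}P[|X_i|>\eta x]=\eta^{-\alpha}$ by stationarity of $(X_n)$; and finite unions of rectangles bounded away from the origin are convergence-determining for vague convergence on $\overline{\mathbb{R}}^{\,h+1}\setminus\{0\}$, so $\mu_h$ extends to a Radon measure and $n\,P[a_n^{-1}(X_0,\dots,X_h)\in\cdot\,]\to\mu_h$ vaguely. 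It is nonzero, since $\mu_h(\{|v_0|>1\})=1$; and it is homogeneous of order $-\alpha$, since $U(x)^{-1}P[x^{-1}(X_0,\dots,X_h)\in uA]=\big(U(ux)/U(x)\big)\big(U(ux)^{-1}P[(ux)^{-1}(X_0,\dots,X_h)\in A]\big)\to u^{-\alpha}\mu_h(A)$. Converting $\mu_h$ to polar coordinates yields the random direction $\Theta_{h+1}$ and the power law $u^{-\alpha}$ of the definition (replacing the denominator $U(x)$ by $P[\|(X_1,\dots,X_k)\|_2>x]$ rescales only by the finite constant $\mu_h(\{\|v\|_2>1\})\ge1$). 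As $h$ was arbitrary, $(X_n)$ is jointly regularly varying of index $\alpha$; in the application below $\alpha=1$, giving the stated exponent.

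The one genuinely delicate point is transferring the \emph{finite-dimensional} convergence $(Y^{(y)}_n)_n\Rightarrow(Y_n)_n$ into convergence of the measures $U(x)^{-1}P[x^{-1}(X_0,\dots,X_h)\in\cdot\,]$: the sets in the decomposition entangle the target rectangle $A$ with the threshold events $\{|X_i|\le\epsilon x\}$, so $\epsilon$ and the faces of $A$ must be chosen outside the countably many atoms of the one-dimensional marginals of the $Y_n$, and a general Borel set bounded away from $0$ must be squeezed from inside and outside between continuity rectangles before the limit is taken. Everything else — finiteness, homogeneity, nontriviality, the polar-coordinate translation — is routine. (This is the argument underlying Theorem 2.1 of \cite{BaSe09}.)
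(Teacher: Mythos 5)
The paper does not prove this statement: it is imported verbatim as Theorem 2.1 of \cite{BaSe09}, so there is no internal proof to compare against. Your reconstruction --- regular variation of $U(x)=P[|X_0|>x]$ from the $n=0$ marginal of the hypothesis, the first-exceedance decomposition over the least index with $|X_j|>\epsilon x$, the stationarity shift reducing each piece to the conditional law given $|X_0|>\epsilon x$, and the passage to the limit via the tail-process convergence, followed by the routine finiteness/homogeneity/polar-coordinate bookkeeping --- is precisely the Basrak--Segers argument and is correct; the one genuinely delicate point (taking $\epsilon$ and the rectangle faces to be continuity sets of the limit law before squeezing general sets) is the right one to flag and is handled in the standard way. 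The only discrepancy worth noting is in the statement itself, not your proof: the conclusion should read ``jointly regularly varying with exponent $\alpha$'' (the paper's ``exponent 1'' reflects only the case $\alpha=1$ used in its application), which you correctly observe.
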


For a process $Y_{n}$ of this form, define $\theta_{n} = \frac{Y_{n}}{\vert Y_{0} \vert}$ to be the \textit{tail process} of $Y_{n}$ (and also of $X_{n}$). \par 

Fix some sequence $a_{n}$ satisfying $\lim_{n \rightarrow \infty} n P[X_{1} > a_{n}] = 1$. Say that a stochastic sequence $X_{n}$ satisfies condition 1 if there exists a positive integer sequence $r_{n}$ so that $r_{n} \rightarrow \infty$, $n^{-1} r_{n} \rightarrow 0$, and for all $u > 0$,
\begin{equation*}
\lim_{m \rightarrow \infty} \limsup_{n \rightarrow \infty} P[ \max_{m \leq \vert i \vert \leq r_{n}} \vert X_{i} \vert > u a_{n} \vert \, \vert X_{0} \vert > u a_{n}] = 0
\end{equation*}
Define $\sigma_{a}^{b}$ to be the $\sigma$-algebra generated by $\{ X_{i} \}_{a \leq i \leq b}$. Say that a strictly stationary sequence $X_{n}$ satisfies condition 2 if 
\begin{equation*}
\lim_{m \rightarrow \infty} \sup_{E \in \sigma_{- \infty}^{0}, F \in \sigma_{m}^{\infty}} \vert P[E \cap F] - P[E] P[F] \vert = 0
\end{equation*}
This condition is known in the literature as \textit{strong mixing}. Say that $X_{n}$ satisfies condition 3 if, for all $\delta > 0$,
\begin{equation*}
\lim_{u \rightarrow 0} \limsup_{n \rightarrow \infty} P \left[ \max_{0 \leq k \leq n} \big| \sum_{i=1}^{k} \left( \frac{X_{i}}{a_{n}} \textbf{1}_{\vert X_{i} \big| \leq u a_{n}} - E \left[ \frac{X_{i}}{a_{n}} \textbf{1}_{\vert X_{i} \vert \leq u a_{n}} \right] \right) \vert > \delta \right] = 0
\end{equation*}
By Proposition 3.7 of \cite{BKS10}, $X_{n}$ satisfies condition 3 if it is $\rho$-mixing. That is, if 
\begin{equation*}
\lim_{m \rightarrow \infty} \sup \{ \vert E[YZ] - E[Y]E[Z] \vert : Y \in L^{2}(\sigma_{-\infty}^{0}), Z \in L^{2}(\sigma_{m}^{\infty}), \vert \vert Y \vert \vert_{2} = \vert \vert Z \vert \vert_{2} = 1 \} = 0
\end{equation*}
Theorem 3.4 of \cite{BKS10} states that

\begin{thm}[Functional Limit Theorem for Mixing Sequences] \label{BdTheoremFunctClt}

Let $X_{n}$ be a strictly stationary stochastic process which is jointly regularly varying with exponent 1. Further, assume that it satisfies conditions 1, 2 and 3. Let $\Theta_{n}$ be its tail process, and assume that $\Theta_{n}$ almost surely has no two values of opposite sign. Then the process
\begin{equation*}
V_{n}(t) = \sum_{k=1}^{\lfloor nt \rfloor} \frac{X_{k}}{a_{n}} - \lfloor nt \rfloor E \left[\frac{X_{1}}{a_{n}} \textbf{1}_{\vert X_{1} \vert \leq a_{n}} \right]
\end{equation*}
Converges in the $M_{1}$ topology on $D[0,1]$ (see \cite{Skor56} for a definition of this topology) to an $\alpha$-stable Levy process with Levy triple $(0, \nu, b)$ (see pp. 150 of \cite{Resn07} for a definition of a process with given Levy triple), where $\nu$ is given by:
\begin{equation*}
\lim_{u \rightarrow 0} \nu^{(u)} = \nu
\end{equation*}
and where, for $x>0$, $\nu^{(u)}$ is given by:
\begin{align*}
\nu^{(u)}(x, \infty) & = u^{-1} P \left[u \sum_{i \geq 0} Y_{i} \textbf{1}_{\vert Y_{i} \vert > 1} > x, \sup_{i \leq -1} \vert Y_{i} \vert \leq 1 \right] \\
\nu^{(u)}(-\infty, -x) & = u^{-1} P \left[u \sum_{i \geq 0} Y_{i} \textbf{1}_{\vert Y_{i} \vert > 1} < -x, \sup_{i \leq -1} \vert Y_{i} \vert \leq 1 \right] 
\end{align*}
and $b$ is given by
\begin{equation*}
b = \lim_{u \rightarrow 0} \int_{x: u < \vert x \vert \leq 1} x \nu^{(u)} (dx) - \int_{x: u < \vert x \vert \leq 1} x \mu(dx)
\end{equation*}
where $\mu$ is given by
\begin{equation*}
\mu(dx) = (p \textbf{1}_{(0, \infty)} (x) + q \textbf{1}_{(-\infty, 0}(x)) \vert x \vert^{-2} dx
\end{equation*}
and, finally,
\begin{align*}
p &= \lim_{x \rightarrow \infty} \frac{P[X_{1} > x]}{P[\vert X_{1} \vert > x]} \\
q &= \lim_{x \rightarrow \infty} \frac{P[X_{1} < -x]}{P[\vert X_{1} \vert > x]} 
\end{align*}
\end{thm}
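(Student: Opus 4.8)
The plan is to prove this via the point‑process method standard for stable functional limit theorems (as in \cite{BKS10}). Since $X_n$ is jointly regularly varying with index $\alpha = 1$ and satisfies the anti‑clustering condition 1 together with the strong mixing condition 2, the first step is to establish convergence of the space‑time point processes $N_n = \sum_{k=1}^{n} \delta_{(k/n,\, X_k/a_n)}$ on $[0,1] \times (\RR \setminus \{0\})$ to a cluster point process $N$. In the limit the ``cluster locations'' (first coordinates) form a Poisson process on $[0,1]$, and to each location one attaches an independent copy of a cluster of second coordinates whose joint law is built from the tail process $\Theta_n$. Condition 2 is what allows one to replace the dependent sequence by asymptotically independent big blocks, and condition 1 guarantees that within a block the exceedances of a level $u a_n$ collapse into finitely many well‑separated clusters, so that the block contributions are genuinely those of a single cluster.

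The second step is to express the centered partial‑sum process $V_n$ as the image of $N_n$ under a summation functional and pass to the limit by a continuous‑mapping argument. Fix a truncation level $u > 0$ and split $V_n(t) = V_n^{(u)}(t) + \widetilde{V}_n^{(u)}(t)$, where $V_n^{(u)}$ records the (recentered) increments with $|X_k| > u a_n$ and $\widetilde{V}_n^{(u)}$ records the rest. The big‑jump part $V_n^{(u)}$ is, up to the centering, obtained from $N_n$ restricted to $\{|x| > u\}$ by summing the second coordinates at times $\leq t$; point‑process convergence from step one then gives $V_n^{(u)} \Rightarrow$ a pure‑jump Lévy process whose jump measure is $\nu^{(u)}$ and whose drift is the truncated‑mean difference appearing in the definition of $b$. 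The delicate point here, specific to $\alpha = 1$, is that the recentering must be performed at the same truncation level on both sides — which is exactly how $\nu^{(u)}$ and the $b$‑integrals are arranged.

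The third step is to remove the truncation by letting $u \to 0$. The remainder $\widetilde{V}_n^{(u)}$ is controlled by condition 3 (which holds, e.g., when $X_n$ is $\rho$‑mixing, by Proposition 3.7 of \cite{BKS10}): for every $\delta > 0$ one has $\lim_{u \to 0} \limsup_{n \to \infty} P[\,\sup_{0 \leq t \leq 1} |\widetilde{V}_n^{(u)}(t)| > \delta\,] = 0$, so the small‑jump part is uniformly asymptotically negligible once $u$ is small. On the limit side $\nu^{(u)} \to \nu$ and the drift corrections converge to $b$, so the truncated limit processes converge to the $\alpha$‑stable Lévy process with Lévy triple $(0,\nu,b)$. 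A standard ``convergence together'' lemma (a Slutsky‑type argument in $D[0,1]$) then upgrades these two facts to $M_1$‑convergence of $V_n$ itself to that process.

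The main obstacle — and the reason the statement is in the $M_1$ topology rather than the usual $J_1$ topology, and the place where the hypothesis on $\Theta_n$ is essential — is the continuity of the summation functional at the limiting point configuration. Inside one cluster several coordinates $X_k/a_n$ of comparable magnitude occur at asymptotically the same time $k/n$, and in the limit they pile up at a single time point; under $J_1$ the partial sums over such a cluster would have to be uniformly close to a single jump, which fails whenever the intermediate partial sums of the cluster are not monotone. The assumption that $\Theta_n$ almost surely has no two coordinates of opposite sign forces the partial sums across each cluster to be monotone, so the completed graph of $V_n$ stays within $M_1$‑distance of a single jump of height equal to the cluster sum. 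Turning this into an honest almost‑sure continuity statement for the summation map, and bounding the oscillation of $V_n$ within each cluster uniformly in $n$, is the technical heart of the proof.
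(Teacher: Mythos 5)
This statement is not proved in the paper at all: it is quoted verbatim as Theorem 3.4 of \cite{BKS10}, and the paper's only ``proof'' is the citation. So the relevant comparison is between your outline and the actual argument in that reference. On that score your sketch is faithful: the proof there does proceed by (i) convergence of the space--time point processes $\sum_k \delta_{(k/n,X_k/a_n)}$ to a cluster Poisson process, with condition 2 supplying the big-block/asymptotic-independence step and condition 1 collapsing exceedances within a block into a single cluster governed by the tail process; (ii) a truncated summation functional applied via continuous mapping, with the recentering at matched truncation levels producing $\nu^{(u)}$ and the drift integrals defining $b$; (iii) condition 3 to make the small-jump remainder uniformly negligible as $u\to 0$, followed by a convergence-together argument; and (iv) the hypothesis that the tail process has no two values of opposite sign being used exactly where you say, to make the cluster partial sums monotone so that the summation map is a.s.\ continuous in $M_1$ (this is also precisely why the result fails in $J_1$). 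You have correctly identified where every hypothesis enters.

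That said, as a standalone proof your write-up is an architecture, not an argument. The two steps you yourself flag as the technical heart --- the space--time point process convergence under conditions 1 and 2, and the almost-sure $M_1$-continuity of the (truncated, recentered) summation functional at the limiting configuration --- are each multi-page arguments in \cite{BKS10} and are asserted rather than carried out here. The $\alpha=1$ case also has a genuine additional subtlety you gesture at but do not resolve: the truncated means $E[(X_1/a_n)\mathbf{1}_{|X_1|\le ua_n}]$ do not vanish in the limit, and one must show that the limit defining $b$ exists and that the level-$u$ and level-$1$ centerings differ by exactly the compensating integrals in the statement. None of this is a wrong turn --- it is the right proof with its hardest lemmas left as black boxes --- but if the intent were to supply a proof the paper lacks, those lemmas would have to be filled in or explicitly delegated to \cite{BKS10}, which is what the paper itself does by citing the theorem wholesale.
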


Almost all of the conditions for this theorem are close to holding for the stochastic process $X_{i} = B[i]$ for $B$ chosen uniformly from $\mathcal{B}_{\pi}$. The one exception is stationarity; $X_{i}$ must be a time-homogeneous Markov chain for the theorem to apply. If $\pi(i) = a^{i-1} \frac{1-a}{1 - a^{n}}$, $X_{i}$ will `look like' a stationary time-homogeneous chain for entries far from $1$ and $n$ (see the limiting process defined in \cite{DiWo10} for a precise example). Otherwise, the Markov chain will not be time-homogeneous, nevermind close to stationarity. To get around this problem, we will couple the process of interest, $X_{i} = B[i]$, to the stationary limiting process $Z_{i}$ described in Section 6 of \cite{DiWo10}. 

To review that paper, the process $Z_{i}$ is the weak limit of the stochastic processes $Z_{i}^{(n)} = A_{n}[i]$, where $A_{n}$ is drawn uniformly from $\mathcal{B}_{(\frac{1}{n}, \frac{1}{n}, \ldots, \frac{1}{n})}$. By Theorem 6.5 of \cite{DiWo10}, this is time-homogeneous with stationary distribution given by the cumulative distribution function $P[Z_{j} \leq x] = \sin(\frac{\pi x}{2})$, and transition kernel $P[Z_{j+1} \leq z \vert Z_{j} = x] = \frac{\sin(\frac{\pi}{2}\min(z, 1-x))}{\sin(\frac{\pi}{2}(1-x) )}$. Theorem \ref{BdTheoremFunctClt} will generally be applied to the process $Z_{j}$, and a coupling will be found to compare $X_{j}$ and $Z_{j}$. The exception is that Theorem \ref{BdTheoremFunctClt} may be applied directly to $X_{i}$ when $\pi$ is of the form $\pi(i) = a^{i-1} \frac{1-a}{1 - a^{n}}$. Although Theorem \ref{BdTheoremFunctClt} will be applied to $Z_{j}$ more often than $X_{j}$ in this paper, many of the bounds depend only on Lemmas \ref{LemmaBdLargenessGibbs} and \ref{LemmaBdSmallness} and Corollary \ref{BdCorGenMixing}. Since $Z_{j}$ is a limit of chains drawn from $\mathcal{B}_{(\frac{1}{n}, \ldots, \frac{1}{n})}$, it is easy to check by taking limits that all of these lemmas apply to $Z_{j}$ as well. \par 
In the remainder of this section, we will show that Theorem \ref{BdTheoremFunctClt} applies to the sequence $Z_{j}$, and then prove a comparison between $X_{j}$ and $Z_{j}$. First, the application to $Z_{j}$: 

\begin{thm} [Limiting Process for Harmonic Sum of Super-Diagonal Entries] \label{BdThmLimitingSumsSupDiag}
Let $Z_{j}$ be the limiting process described in Section 6 of \cite{DiWo10}. Then the renormalized process
\begin{equation*}
V_{n}(t) = \sum_{k=1}^{\lfloor nt \rfloor} \frac{Z_{k}}{2n} - \lfloor nt \rfloor \int_{\frac{1}{2n}}^{1} \frac{1}{ny} \cos^{2} \left( \frac{\pi}{2} y \right) dy
\end{equation*}
converges in the $M_{1}$ topology to a Levy process with Levy triple $(0, \nu, 0)$ where, for $x > 1$,
\begin{equation*}
\nu(x, \infty) = \frac{1}{x}
\end{equation*}
\end{thm}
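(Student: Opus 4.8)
The plan is to verify the hypotheses of Theorem \ref{BdTheoremFunctClt} for the stationary, time-homogeneous Markov chain $X_k = Z_k$ with $a_n = 2n$, and then read off the Lévy triple. First I would establish \emph{joint regular variation with index $1$} via the sufficient condition of Theorem \ref{BdThmSuffCondJointRegVar}. Near $0$, the marginal law of $Z_k$ has density $\frac{\pi}{2}\cos(\frac{\pi x}{2})$, so $P[Z_k > 1 - s] \sim \frac{\pi^2}{4} \cdot \frac{s^2}{2}$-type behavior is irrelevant; what matters is the \emph{reciprocal}, since the harmonic sums in \eqref{BdEqHittingTime} involve $1/Z_k$. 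So I would instead work with $X_k = \frac{1}{2 Z_k}$ (or directly track the tails of $Z_k^{-1}$): from $P[Z_k \le x] = \sin(\frac{\pi x}{2}) \sim \frac{\pi x}{2}$ as $x \to 0$, we get $P[Z_k^{-1} > t] \sim \frac{\pi}{2 t}$, hence $P[(2 Z_k)^{-1} \cdot c > t]$ scales like $t^{-1}$ for the right constant $c$; choosing $a_n = 2n$ gives $n P[X_1 > a_n] \to 1$. The nontrivial input is the \emph{tail process}: conditioned on $Z_0$ being tiny (so $Z_0^{-1}$ large), the transition kernel $P[Z_{j+1} \le z \mid Z_j = x] = \frac{\sin(\frac{\pi}{2}\min(z,1-x))}{\sin(\frac{\pi}{2}(1-x))}$ forces $Z_1$ to be \emph{not} small — indeed as $x \to 0$ the conditional law of $Z_1$ converges to the stationary law $\sin(\frac{\pi z}{2})$, which puts no mass near $0$; and by Lemma \ref{LemmaBdLargenessGibbs} (which applies to $Z_j$ by taking limits), small values don't recur. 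So the tail process $\Theta_n$ is supported on the single index $0$: $\Theta_0 = \pm 1$ and $\Theta_n = 0$ for $n \ne 0$. This simultaneously verifies the hypothesis of Theorem \ref{BdTheoremFunctClt} that $\Theta_n$ has no two values of opposite sign (trivially, as only one index is nonzero, and it's positive since $Z_j > 0$), and makes the clustering conditions essentially vacuous.

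Next I would check \emph{conditions 1, 2, 3}. For condition 1, the tail-process computation above shows that conditioned on $X_0$ large, the neighbors $X_i$ for $i \ne 0$ are (asymptotically) bounded away from the level $u a_n$ — quantitatively, by the Largeness corollary, $P[X_i > u a_n \mid X_0 > u a_n] = P[Z_i < \frac{1}{2 u n}\cdot(\text{const}) \mid Z_0 < \cdots]$, and conditioning on one tiny entry does \emph{not} make a non-adjacent entry tiny (in fact the conditioning only weakly propagates, by the spatial-mixing estimate of Corollary \ref{BdCorGenMixing}, which applies to $Z_j$). For conditions 2 and 3 ($\rho$-mixing, which implies condition 3 by Proposition 3.7 of \cite{BKS10}, and strong mixing, which is weaker), the key fact is that $Z_j$ is a \emph{stationary Markov chain whose transition kernel contracts geometrically}: Corollary \ref{BdCorGenMixing} gives $\| \mathcal{L}(Z[i+4\ell] \mid Z[i]=a) - \mathcal{L}(Z[i+4\ell] \mid Z[i]=a') \|_{TV} \le (1 - \frac{4}{27})^{\ell}$, which is exactly a Doeblin-type / uniform-ergodicity bound and yields exponential decay of $\rho$-mixing (hence $\alpha$-mixing) coefficients. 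So conditions 2 and 3 hold with room to spare.

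Finally, I would compute the \emph{Lévy triple}. With the tail process degenerate ($\Theta_i = 0$ for $i \ne 0$, $\Theta_0 = 1$, so $p = 1$, $q = 0$), the formulas in Theorem \ref{BdTheoremFunctClt} collapse: $\nu^{(u)}(x,\infty) = u^{-1} P[u Y_0 \mathbf{1}_{Y_0 > 1} > x]$ for the single-index limiting variable $Y_0$, and letting $u \to 0$ recovers $\nu(x,\infty) = \frac{1}{x}$ for $x > 1$ from the Pareto tail $P[Y_0 > y] = y^{-1}$; the Gaussian part is $0$ and, because the summands are nonnegative and the positive/negative contributions to $b$ cancel against the matching $\mu$-integral (both sides being the truncated mean of the same one-sided $1$-stable ingredient, which — with the particular centering $\lfloor nt\rfloor \int_{1/(2n)}^1 \frac{1}{ny}\cos^2(\frac{\pi}{2}y)\,dy$ chosen precisely to match $\lfloor nt\rfloor E[\frac{X_1}{a_n}\mathbf{1}_{|X_1|\le a_n}]$) gives $b = 0$. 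The centering identity $\int_{1/(2n)}^1 \frac{1}{ny}\cos^2(\frac{\pi}{2}y)\,dy = E[\frac{Z_1}{2n}\mathbf{1}_{Z_1 \ge \frac{1}{2n}}] + o(1/n)$ — equivalently $= E[\frac{1}{2n Z}\mathbf{1}_{\frac{1}{2nZ}\le 1}]$ after the change of variables $z \mapsto 1/(2ny)$ using the stationary density — is a direct (if slightly fiddly) computation from $P[Z \le z] = \sin(\frac{\pi z}{2})$.

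The main obstacle I expect is the \emph{tail-process identification and condition 1} — i.e., rigorously showing that conditioning on $Z_0 \approx 0$ does not make any other coordinate small, with uniform-enough control to pass to the limit $m \to \infty$ after $n \to \infty$. The transition kernel is explicit, so the one-step statement is elementary, but propagating it to show that the \emph{entire} conditioned sequence $(x^{-1} X_n)_n$ converges in finite-dimensional distribution to a process that is $Y_0$ at index $0$ and genuinely $0$ (not merely small) elsewhere requires combining the explicit kernel near the boundary with the geometric ergodicity of Corollary \ref{BdCorGenMixing}; everything else is either a standard mixing-coefficient estimate or a calculus exercise with $\sin$ and $\cos$.
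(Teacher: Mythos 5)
Your proposal follows essentially the same route as the paper's proof: verify the hypotheses of Theorem \ref{BdTheoremFunctClt} for the reciprocal process with $a_n = 2n$ --- joint regular variation with index $1$ and a tail process supported at the single index $0$ via Theorem \ref{BdThmSuffCondJointRegVar} together with Lemmas \ref{LemmaBdLargenessGibbs} and \ref{LemmaBdSmallness}, and conditions 1--3 via those same lemmas and the geometric contraction of Corollary \ref{BdCorGenMixing} --- and then read off the L\'evy triple $(0,\nu,0)$ from the degenerate tail process exactly as the paper does. The only point to align is bookkeeping of constants: the paper's computation of $nP[\cdot > a_n]$ and of the centering term uses the marginal density $2\cos^2(\tfrac{\pi}{2}y)$ appearing in the statement, rather than the $\sin(\tfrac{\pi}{2}x)$ CDF you start from, so your normalizing constant $c$ should be tracked against that density.
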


First, some initial calculations. By Theorem 6.5 of \cite{DiWo10}, the sequence $a_{n} = 2n$ satisfies
\begin{align*}
\lim_{n \rightarrow \infty} n P[Z_{1} > a_{n}] &= \lim_{n \rightarrow \infty} n \int_{0}^{\frac{1}{2n}} 2 \cos^{2} \left( \frac{\pi}{2} x \right) dx \\
&= 1
\end{align*}
So this sequence $a_{n}$ satisfies the requirements of Theorem \ref{BdTheoremFunctClt}. The term $\int_{\frac{1}{2n}}^{1} \frac{1}{ny} \cos^{2}(\frac{\pi}{2} y) dy$ in the statement of Theorem \ref{BdThmLimitingSumsSupDiag} comes from explicitly calculating $E[\frac{Z_{1}}{2n} 1_{\vert Z_{1} \vert < 2n}]$, based again on Theorem 6.5 of \cite{DiWo10}. Note that asymptotically, this is approximated by
\begin{equation*}
E \left[\frac{Z_{1}}{2n} 1_{\vert Z_{1} \vert < 2n} \right] = \frac{\log(2n)}{n} + O \left(\frac{1}{n} \right)
\end{equation*}
Next, it is necessary to show that $Z_{j}$ follows conditions 1,2 and 3. For condition 2, set $r_{n} = \sqrt{n}$. Then by lemmas \ref{LemmaBdLargenessGibbs} and \ref{LemmaBdSmallness},
\begin{align*}
\lim_{m \rightarrow \infty}  \limsup_{n \rightarrow \infty} P[\sup_{m \leq i \leq r_{n}} \vert Z_{i} \vert > u n \mid Z_{0} > u n]  & \leq \lim_{m \rightarrow \infty} \limsup_{n \rightarrow \infty} (\sqrt{n} - m) \frac{3}{un} \\
& \leq \lim_{m \rightarrow \infty} \frac{3}{4 u m} \\
&= 0
\end{align*}
so condition 1 is satisfied. Condition 2 is an immediate consequence of Lemma \ref{BdCorGenMixing}. Condition 3 is, as mentioned, implied by $\rho$-mixing, which is an immediate consequence of Theorem 6.5 of \cite{DiWo10}. Next, we describe the regular variation of $Z_{j}$. First, we have:

\begin{lemma} [Regular Variation] \label{BdLemmaRegVar}
Fix $\pi$, and let $K$ be chosen uniformly from $\mathcal{A}_{\pi}$. Under the uniform distribution, the random variable $\frac{1}{K[i,i+1]}$ is regularly varying with exponent 1 for all $0 \leq i \leq n-1$.
\end{lemma}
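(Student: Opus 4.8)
The plan is to show that as $t \to \infty$, the tail probability $P[\frac{1}{K[i,i+1]} > t] = P[K[i,i+1] < \frac{1}{t}]$ is asymptotic to $\frac{c_i}{t}$ for some constant $c_i = c_i(n,\pi) > 0$, which is exactly the statement that $\frac{1}{K[i,i+1]}$ is regularly varying with exponent $1$. Equivalently, writing $\alpha = 1/t \to 0^+$, I want to prove that $\lim_{\alpha \to 0} \alpha^{-1} P[K[i,i+1] < \alpha]$ exists and is a finite positive number. The upper bound on this ratio is already essentially done: by the Smallness lemma (Lemma \ref{LemmaBdSmallness}), applied after conditioning on the values $B[i-2]$ and $B[i+2]$ and integrating, $P[B[i] < \alpha] = O(\alpha)$, so the limsup is finite; and the Corollary to the Largeness lemma, which gives $P[B[i] \geq x \min(1, \frac{\pi(i+1)}{\pi(i)})] \leq 1-x$, shows the liminf is positive (take $x$ near $1$ to see $P[B[i] < \alpha]$ cannot vanish faster than linearly — more precisely, one shows $P[B[i] \in (0,\alpha))$ is bounded below by a constant times $\alpha$ using the ``hat map'' reflection argument from Lemma \ref{LemmaBdLargenessGibbs} to compare the interval $(0,\alpha)$ with the interval $(\alpha, 2\alpha)$, then $(2\alpha, 3\alpha)$, etc.). So the content of the lemma is the \emph{existence} of the limit, not merely that it is pinched between two positive constants.

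To get existence of the limit, I would compute $P[K[i,i+1] < \alpha]$ exactly using the Markov property for super-diagonal entries (Theorem \ref{BdThmMarkovProp}) together with the explicit integral representation of the densities. By that Markov property, the density of $B[i]$ at a point $x$ is proportional to $f_{1,i-1}$-type weight times $f_{i+1,n}$-type weight; writing $h_i(x)$ for the (unnormalized) density of $B[i]$ on its support $[0, \min(1, \tfrac{\pi(i+1)}{\pi(i)})]$, one has $P[B[i] < \alpha] = \frac{\int_0^\alpha h_i(x)\,dx}{\int_0^{\cdot} h_i(x)\,dx}$. The key observation is that $h_i$ is continuous at $0$ with $h_i(0) > 0$: the value $h_i(0)$ is a product/integral of strictly positive polytope volumes (none of the constraints degenerate at $x=0$ since $\pi$ has full support on $[n]$). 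Therefore $\int_0^\alpha h_i(x)\,dx = h_i(0)\alpha + o(\alpha)$ as $\alpha \to 0$, and dividing by the (fixed, positive) normalizing constant gives $P[B[i] < \alpha] \sim \frac{h_i(0)}{\|h_i\|_1}\,\alpha$, which is the desired regular variation with exponent $1$ and tail constant $c_i = \|h_i\|_1 / h_i(0)$.

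The main obstacle I anticipate is making the claim ``$h_i$ is continuous at $0$ with $h_i(0)>0$'' fully rigorous: one must check that the iterated-integral expression defining $h_i$ (the $f$'s in Theorem \ref{BdThmMarkovProp}, with the $\min$'s in the limits of integration) depends continuously on the outer variable $x$ near $x=0$ and does not collapse there. Continuity follows because each limit of integration is a $\min$ of linear functions and the integrand is $1$, so the whole thing is a continuous (indeed piecewise-polynomial) function of the parameters; positivity at $x=0$ follows because at $x=0$ every constraint $0 \leq X[j] \leq \min(\cdots)$ still has nonempty interior (the upper bounds are all strictly positive since consecutive ratios $\pi(j\pm1)/\pi(j)$ are strictly positive). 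Once that is in hand, the conditioned version of the statement — regular variation of $\frac{1}{K[i,i+1]}$ conditioned on other entries, which is what is actually needed later — follows verbatim, since conditioning on $B[i-k_1]=v_1$, $B[i+k_2]=v_2$ just replaces $h_i$ by another density of the same product form, still continuous and strictly positive at $0$ for $v_1, v_2$ in the (open) allowed range. I would close by remarking that this identifies the tail index as $1$ uniformly in $n$, which is what feeds into Theorem \ref{BdThmSuffCondJointRegVar} and hence Theorem \ref{BdTheoremFunctClt}.
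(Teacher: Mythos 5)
Your proposal is correct, but it takes a genuinely different route from the paper. The paper's proof is entirely soft: by Lemma \ref{LemmaBdLargenessGibbs} the function $f(x) = x\,P[K[i,i+1] < \tfrac{1}{x}]$ is monotone increasing (the ``left half of any interval carries at least as much mass as the right half'' property forces the averaged density over $[0,\alpha]$ to be non-increasing in $\alpha$), and by Lemma \ref{LemmaBdSmallness} it is bounded above; a monotone bounded function has a limit $\beta$, which is positive since $f(1)>0$, and the regular-variation ratio $\frac{axP[1/K>ax]}{xP[1/K>x]} \to 1$ then follows by a two-line estimate. You instead compute the marginal density $h_i$ of $B[i]$ as a normalized section volume of the polytope $\mathcal{B}_\pi$, and argue it is continuous and strictly positive at $0$, yielding the sharper conclusion $P[B[i]<\alpha] \sim \frac{h_i(0)}{\|h_i\|_1}\,\alpha$. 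Both arguments are valid; yours buys an explicit identification of the tail constant as a ratio of polytope volumes (and, as you note, transfers verbatim to the conditioned versions needed later), while the paper's avoids the one delicate point in your approach -- continuity of a section-volume function at the \emph{endpoint} of its support, where Brunn--Minkowski alone only gives semicontinuity from one side. Your justification via the iterated-integral representation (limits of integration are minima of linear functions, integrand is $1$, so the whole expression is continuous, indeed piecewise polynomial, in the outer variable) does close this correctly, and you rightly flag it as the main thing to make rigorous; the paper's monotonicity argument simply never has to confront it.
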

\begin{proof}
By Lemma \ref{LemmaBdLargenessGibbs}, $f(x) \equiv x P[K(i,i+1) < \frac{1}{x}]$ is a monotone increasing function in $x$. By Lemma \ref{LemmaBdSmallness}, $f(x)$ is bounded above by some constant $C$. Thus, $\lim_{x \rightarrow \infty} f(x)$ exists; call this value $\beta$. The next step is to show that $ \lim_{x \rightarrow \infty} \frac{ax P[\frac{1}{K(i,i+1)} > ax]}{x P[\frac{1}{K(i,i+1)} > x]} = 1$ for each $a > 0$. \\
Fix $a > 0$ and level of approximation $\delta > 0$. Then choose $X_{a, \delta}$ so that for $x > X_{a, \delta}$, $\vert x P[K(i,i+1) < \frac{1}{x}] - \beta \vert$ and $\vert ax P[K(i,i+1) < \frac{1}{ax}] - \beta \vert$ are both less than $\min(\frac{\delta \beta}{3}, \frac{1}{4})$. Then
\begin{align*}
\frac{\beta - \epsilon}{\beta + \epsilon} &\leq \frac{ax P[\frac{1}{K(i,i+1)} > ax]}{x P[\frac{1}{K(i,i+1)} > x]} \leq \frac{\beta + \epsilon}{\beta - \epsilon} \\
1 - \frac{3 \epsilon}{\beta} &\leq \frac{ax P[\frac{1}{K(i,i+1)} > ax]}{x P[\frac{1}{K(i,i+1)} > x]} \leq 1 + \frac{3 \epsilon}{\beta} \\
1 - \delta &\leq \frac{ax P[\frac{1}{K(i,i+1)} > ax]}{x P[\frac{1}{K(i,i+1)} > x]} \leq 1 + \delta 
\end{align*}
which proves the lemma. \end{proof}

We use this to show joint regular variation:

\begin{lemma}[Joint Regular Variation of Limiting Super-Diagonal Chain] \label{BdLemmaJointRegVarUnif}
$Z_{j}$ is jointly regularly varying, with a.s. nonnegative tail process.
\end{lemma}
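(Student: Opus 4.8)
The plan is to verify the hypotheses of Theorem \ref{BdThmSuffCondJointRegVar} for the stationary sequence $X_j = Z_j$, with exponent $\alpha = 1$ applied to the reciprocals $W_j = \frac{1}{Z_j}$ (this is the quantity that actually appears in the harmonic sums of \eqref{EqSpectralGapBounders} and \eqref{BdEqHittingTime}, and the one for which Lemma \ref{BdLemmaRegVar} established one-dimensional regular variation). First I would record that $Z_j$ is strictly stationary: this is Theorem 6.5 of \cite{DiWo10}, which gives that $Z_j$ is a time-homogeneous Markov chain with the stated stationary marginal $P[Z_j \le x] = \sin(\frac{\pi x}{2})$, so $W_j = 1/Z_j$ is strictly stationary as well. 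Then I need to exhibit a process $Y_n$ with $P[|Y_0| > y] = y^{-1}$ for $y \ge 1$ and verify that the conditioned, rescaled sequence $\{ x^{-1} W_n : |W_0| > x \}$ converges in finite-dimensional distribution to $Y_n$ as $x \to \infty$.

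The key steps, in order: (1) Normalize so that the tail of $W_0$ is exactly Pareto. By Lemma \ref{BdLemmaRegVar} and the explicit density from Theorem 6.5 of \cite{DiWo10}, $x P[W_0 > x] = x P[Z_0 < 1/x] \to \beta$ for an explicit constant $\beta$ (here $\beta = \int_0^{\cdot}$-type constant, in fact $\beta = \frac{4}{\pi^2}$ up to the normalization, but the precise value is irrelevant); rescaling $W_j$ by $\beta$ reduces to the standard form. (2) Identify the tail process. Condition on $W_0 > x$, i.e. on $Z_0$ being within $O(1/x)$ of $0$, and use the explicit transition kernel $P[Z_{j+1} \le z \mid Z_j = w] = \frac{\sin(\frac{\pi}{2}\min(z,1-w))}{\sin(\frac{\pi}{2}(1-w))}$. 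As $Z_0 \to 0$ the one-step kernel out of $Z_0$ converges to the stationary law, and crucially $Z_1, Z_2, \ldots$ stay bounded away from $0$ with probability tending to $1$ (this is exactly the Largeness/Smallness content of Lemmas \ref{LemmaBdLargenessGibbs} and \ref{LemmaBdSmallness}, applied to the limiting chain as in the remark following Theorem \ref{BdThmLimitingSumsSupDiag}). Hence in the limit $W_1, W_2, \ldots$ and $W_{-1}, W_{-2}, \ldots$ are all finite, so the tail process $\theta_n = Y_n/|Y_0|$ satisfies $\theta_n = 0$ for all $n \ne 0$ and $\theta_0 = 1$; in particular the tail process is a.s. nonnegative, which is the second assertion of the lemma. (3) Assemble: $Y_n$ is the process with $Y_0$ standard Pareto and $Y_n \equiv 0$ otherwise, which trivially has $P[|Y_0| > y] = y^{-1}$, and by (2) the finite-dimensional convergence in the second bullet of Theorem \ref{BdThmSuffCondJointRegVar} holds. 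Therefore $W_j$ — and hence, by the obvious reformulation, the super-diagonal chain $Z_j$ in the sense used in the sequel — is jointly regularly varying with exponent $1$ and a.s. nonnegative (indeed a.s. zero off the origin) tail process.

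The main obstacle is step (2): making rigorous the claim that, after conditioning on the atypically large value $W_0 > x$ (equivalently $Z_0 \approx 0$), the neighboring coordinates $Z_{\pm 1}, Z_{\pm 2}, \ldots$ do not themselves blow up, so that the limiting tail process is supported at a single index. Forward in time this is a direct estimate on the explicit kernel, but backward in time one must argue about $P[Z_{-1} = w \mid Z_0 \approx 0]$, which requires Bayes' rule against the stationary measure and the monotonicity/boundedness supplied by Lemma \ref{LemmaBdLargenessGibbs} and Lemma \ref{LemmaBdSmallness} (the conditioning versions, valid for $Z_j$ by the limiting argument already noted). Once the uniform-in-$x$ smallness estimate $\limsup_x P[\,|W_i| > \epsilon x \mid |W_0| > x\,] = O(1/(\epsilon\, |i|))$-type bound is in hand for all fixed $i \ne 0$ — essentially condition 1, already verified above for $Z_j$ with $r_n = \sqrt n$ — the finite-dimensional limit follows, and the rest is bookkeeping. $\square$
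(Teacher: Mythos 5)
Your proposal is correct and follows essentially the same route as the paper: both verify Theorem \ref{BdThmSuffCondJointRegVar} with the candidate limit process equal to a standard Pareto variable at index $0$ and identically $0$ elsewhere, using Lemma \ref{BdLemmaRegVar} for the marginal tail and Lemmas \ref{LemmaBdLargenessGibbs} and \ref{LemmaBdSmallness} to show the conditioned off-origin coordinates vanish in the limit, yielding the nonrandom nonnegative tail process $\theta_n = \mathbf{1}_{n=0}$. Your explicit attention to the reciprocal convention and to the backward-in-time indices is in fact more careful than the paper's own write-up, which treats only positive indices and leaves the negative ones to stationarity.
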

Define $Q_{0}$ to have the distribution $P[Q_{0} > x] = \frac{1}{x}$ for $x \geq 1$, and define $Q_{j} \equiv 0$ for $j>0$. It is necessary to show only that this process satisfies the conditions of Lemma \ref{BdThmSuffCondJointRegVar}. By Lemmas \ref{LemmaBdLargenessGibbs} and \ref{LemmaBdSmallness}, for $j \geq 1$, $P[Z_{j} > a \vert Z_{0} > x] = O(a^{-1})$ as $x$ goes to infinity.  By the union bound, then, for any $a > 0$ and any fixed collection of positive integers $J$ not containing 0, $P[\sup_{j \in J} Z_{j} > xa \vert Z_{0} > x] = O(\frac{\vert J \vert}{x})$. For $j = 0$, Lemma \ref{BdLemmaRegVar} implies that $\lim_{x \rightarrow \infty} P[Z_{0} > ax \vert Z_{0} > x] = a^{-1}$ for $a \geq 1$. \par 
Note also that the process is almost surely nonnegative, and that $\theta_{n} = \textbf{1}_{n=0}$ is the (nonrandom) tail process associated with $Z_{j}$. $\square$ \par 

So all of the conditions of Theorem \ref{BdTheoremFunctClt} are satisfied for the chain $Z_{j}$. The only remaining work is to calculate the values of the Levy triple.  \par 
Define $Q_{0}$ to have the distribution $P[Q_{0} > x] = \frac{1}{x}$ for $x \geq 1$, and define $Q_{j} \equiv 0$ for $j>0$. Then 
\begin{align*}
\nu(x, \infty) &= \lim_{u \rightarrow 0} u^{-1} P[u \sum_{i \geq 0} Q_{i} 1_{Q_{i} > 1} >x, \sup_{i \leq -1} Q_{i} \leq 1] \\
&= \lim_{u \rightarrow 0} u^{-1} P[Q_{0} > u^{-1} x] \\
&= \frac{1}{x} 
\end{align*}
for $x \geq 1$. This implies $\nu(-\infty,x) = 0$ for $x < 0$. It is immediate that $b = 0$. This proves the theorem. $\square$

Finally, we prove a comparison result. For a fixed distribution $\pi$, define 
$M(i) = 16 \min(1, \frac{\pi(i+1)}{\pi(i)})$. Then:

\begin{lemma} [Coupling to Stationarity] \label{BdLemCoupToStat}
Let $B$ be drawn uniformly from $\mathcal{B}_{\pi}$, and let $X_{j} = B[j]$. Then let $A$ be drawn uniformly from $\mathcal{B}_{(\frac{1}{n}, \frac{1}{n}, \ldots, \frac{1}{n})}$, and let $Z_{j} = A[j]$. It is possible to construct four chains $Y_{j}^{(i)}$, $1 \leq i \leq 4$, such that $Y_{j}^{(i)} \stackrel{D}{=} Z_{j}$ in distribution, and so that for indices $2 \leq j \leq n-2$,

\begin{align*}
\frac{1}{256} M(2j) Y_{2j}^{(1)} &\leq X_{2j} \leq 256 M(2j) Y_{2j}^{(2)} \\
\frac{1}{256} M(2j+1) Y_{2j+1}^{(3)} &\leq X_{2j+1} \leq 256 M(2j+1) Y_{2j+1}^{(4)} \\
\end{align*}
otherwise. Note that the four chains $Y_{j}^{i}$ may not be independent.
\end{lemma}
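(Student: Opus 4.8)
\emph{The plan.} I would build $B$ (hence $X_j=B[j]$) jointly with the four chains $Y^{(1)},\dots,Y^{(4)}$ on a single probability space, defining coordinates in increasing order of index and coupling one coordinate at a time by the monotone (quantile) coupling of two real distributions. The starting point is Theorem \ref{BdThmMarkovProp}: both $(X_j)$ and $(Z_j)$ (the latter being the parameterization of $\mathcal{A}_{(1/n,\dots,1/n)}$, to which that theorem applies directly) are Markov chains, so the subsampled sequences $(X_{2j})_j,(Z_{2j})_j$ and $(X_{2j+1})_j,(Z_{2j+1})_j$ are again Markov chains, now with two-step transition kernels. It therefore suffices to couple, say, the even-indexed chain $(X_{2j})_j$ with a faithful copy $(Y^{(2)}_{2j})_j$ of $(Z_{2j})_j$ so that $X_{2j}\le 256\,M(2j)\,Y^{(2)}_{2j}$ throughout, and similarly for the other three subsampled chains; each $Y^{(i)}$ is then completed to a full copy of the process $(Z_j)$ by filling in its remaining (and initial) coordinates from the correct conditional laws, and since all four are built alongside the same $B$ they need not be mutually independent.

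\emph{One-step estimates.} For a coordinate $\ell$ away from the two endpoints, the conditional form of the Corollary to Lemma \ref{LemmaBdLargenessGibbs}, applied with $x=16s/M(\ell)$, gives, conditionally on the past of the relevant subsampled chain,
\[
P\!\left[X_\ell\ge s\mid\mathrm{past}\right]\le\left(1-\tfrac{16s}{M(\ell)}\right)_{+},
\]
while Lemma \ref{LemmaBdSmallness}, applied to the coordinate $\ell$ with conditioning coordinates $\ell-2$ and $\ell+2$ and then integrated over $X_{\ell+2}$ (legitimate because that bound is uniform in the conditioned values, and then reduced to conditioning on $X_{\ell-2}$ alone by the Markov property), gives
\[
P\!\left[X_\ell<t\mid\mathrm{past}\right]\le t\,M(\ell)\qquad(0\le t\le\tfrac13).
\]
Specializing to uniform $\pi$, where $M\equiv16$, the same two estimates hold for $Z$, namely $P[Z_\ell\ge s\mid\mathrm{past}]\le(1-s)_{+}$ and $P[Z_\ell<t\mid\mathrm{past}]\le16t$ for $t\le\tfrac13$. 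Beyond these, the only numerical input is the trivial bound $M(\ell)=16\min(1,\pi(\ell+1)/\pi(\ell))\le16$, so $M(\ell)^2\le256$, together with the pointwise bound $X_\ell\le M(\ell)/16$.

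\emph{The coupling and the constants.} Suppose every coordinate of index $<2j$ has been defined. Generate $X_{2j}$ from $\mathcal{L}(X_{2j}\mid X_{2j-2})$ and $Y^{(2)}_{2j}$ from $\mathcal{L}(Z_{2j}\mid Z_{2j-2}=Y^{(2)}_{2j-2})$, and couple these one-dimensional laws monotonically after scaling the second by $256\,M(2j)$. This forces $X_{2j}\le256\,M(2j)\,Y^{(2)}_{2j}$: for $s>M(2j)/16$ one has $P[X_{2j}<s]=1$ since $X_{2j}\le M(2j)/16$, and for $s\le M(2j)/16$ the estimates above give $P[X_{2j}<s]\ge16s/M(2j)$ while $P[256M(2j)Z_{2j}<s]=P[Z_{2j}<s/(256M(2j))]\le s/(16M(2j))\le16s/M(2j)$. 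For the lower bound one couples $X_{2j}$ with $Y^{(1)}_{2j}\stackrel{D}{=}Z_{2j}$ the same way: for $t>M(2j)/256$ the argument $256t/M(2j)>1$ makes $P[\tfrac{1}{256}M(2j)Z_{2j}<t]=1$, and for $t\le M(2j)/256$ (hence $t<\tfrac13$) one has $P[X_{2j}<t]\le tM(2j)$ and $P[\tfrac{1}{256}M(2j)Z_{2j}<t]=P[Z_{2j}<256t/M(2j)]\ge256t/M(2j)\ge tM(2j)$, the last step being exactly $M(2j)^2\le256$. The odd-indexed chains $Y^{(3)},Y^{(4)}$ are handled identically, with $2j+1$ in place of $2j$. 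Coordinates near $1$ and $n$ are excluded because Lemma \ref{LemmaBdSmallness} requires the conditioning coordinates $\ell\pm2$ of a coordinate $\ell$ to be valid indices in $\{1,\dots,n-1\}$.

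\emph{Main obstacle.} The arithmetic of the constants is routine once $M(\ell)\le16$ is in hand; the delicate points are structural. One must check that the even and odd subsamples of $(X_j)$ and $(Z_j)$ are genuinely Markov chains, that the one-step bounds of Lemmas \ref{LemmaBdLargenessGibbs} and \ref{LemmaBdSmallness} survive the reduction from their native two-sided conditioning to the past-only conditioning a sequential coupling requires (the integration-out of the forward coordinate, valid by uniformity of the bound in that coordinate, is the key move), and that the step-by-step monotone couplings patch together into one global construction in which each $Y^{(i)}$ has exactly the law of the process $(Z_j)$. A secondary point, easily disposed of, is that the Smallness estimate only controls the lower tail up to $1/3$, but the rescalings in both inequalities keep the relevant comparisons strictly inside that range.
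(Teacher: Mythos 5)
Your proposal is correct and follows essentially the same route as the paper: a sequential quantile (monotone) coupling of the even- and odd-indexed subsampled chains, justified by the Markov property of Theorem \ref{BdThmMarkovProp} and with the one-step comparison supplied by Lemmas \ref{LemmaBdLargenessGibbs} and \ref{LemmaBdSmallness}. You are in fact somewhat more careful than the paper's sketch, in particular about reducing the two-sided conditioning of the Smallness bound to past-only conditioning and about verifying the exact CDF dominations that make the monotone coupling yield the almost-sure inequalities with the constant $256 \geq M(\ell)^2$.
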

\textbf{Proof:}
By Lemmas \ref{LemmaBdLargenessGibbs} and \ref{LemmaBdSmallness}, 

\begin{equation} \label{BdIneqCoupStat}
\frac{M(j) \beta}{256 \alpha} \leq \frac{P[\alpha X_{j} \leq x \vert X_{j-2}, X_{j+2}]}{P[\beta Y_{j} \leq x \vert Y_{j-2}, Y_{j+2}]} \leq \frac{256 M(j) \beta}{\alpha}
\end{equation}
To build $Y_{j}^{(2)}$ step by step, begin by coupling $Y_{0}^{(2)}$ to $X_{0}$ so that they are both at the same quantile in their respective distributions. That is, choose $X_{0} = x$ from its distribution, then set $Y_{0}^{(2)}$ to the unique value $a$ which satisfies $P[Y_{0}^{(2)} \leq a] = P[X_{0} \leq x]$. Next, couple $Y_{2j+2}^{(2)}$ to $X_{2j+2}$ given $X_{2j}$ and $Y_{2j}^{(2)}$ so that they both have the same quantile in their respective conditional distributions. By the inequalities on line \ref{BdIneqCoupStat}, they will satisfy the inequalities in the statement of this lemma. Do this until all even entries have been filled in. Then construct the odd entries of $Y_{j}^{(2)}$ conditionally on the even entries, independently of $X_{j}$; their values are not relevant to this lemma. The remaining sequences $Y_{t}^{(1)}, Y_{t}^{(3)}$ and $Y_{t}^{(4)}$ are constructed the same way.  $\square$
\par 
Note that if finer control of $X_{t}$ by chains with the same distribution as $Y_{t}$ is necessary, it is possible to improve the factors of $256$ in the statement of this lemma by using $2k$ chains $Y_{t}^{1}, \ldots, Y_{t}^{2k}$ and controlling entries $X_{kj + b}$ with the chains $Y_{t}^{2b-1}, Y_{t}^{2b}$. This can quantitatively improve many later bounds, and lead to improved bounds on cutoff location, but is not useful for proving existence or non-existence of cutoff. See section 2.11 of \cite{Smit12a} for details. \par 

By Lemma \ref{BdLemCoupToStat}, for any nonnegative function $h$,
\begin{align*}
\sum_{j=1}^{\frac{n}{2}} \left( \frac{h(2j)M(2j)}{256 Y_{2j}^{(2)}} + \frac{h(2j+1)M(2j+1)}{256 Y_{2j+1}^{(4)}} \right) &\leq \sum_{j=1}^{n} \frac{h(j)}{ X_{j} } \\
\sum_{j=1}^{\frac{n}{2}} \left( \frac{256 h(2j)M(2j)}{Y_{2j}^{(2)}} + \frac{256 h(2j+1)M(2j+1)}{Y_{2j+1}^{(4)}} \right) &\geq 
\sum_{j=1}^{n} \frac{h(j)}{ X_{j} } 
\end{align*}
This will allow us to approximate functionals of $X_{j}$ such as \eqref{EqSpectralGapBounders} and \eqref{BdIneqHittingMixingRelation} by substituting $Y_{2j}^{(i)}$ for $X_{j}$. \par 

\section{Cutoff Examples} \label{SecCutoffExamples}
This section proves that random birth and death chains with `IF' distribution (see \cite{LSN11} for a description of this distribution and some basic calculations) and binomial distribution can exhibit cutoff. We begin by looking at the IF distributions. These distributions $\pi = \pi_{n, \epsilon,a}$ are symmetric on $[2n-1]$. They satisfy $\pi(j) = c a^{-n + n^{\epsilon} + j - 1}$ for $1 \leq j \leq n - n^{\epsilon}$ and $\pi(j) = c$ for $n - n^{\epsilon} < j \leq n$, where $c$ is a normalizing constant, $a > 1$, $0 < \epsilon < 1$. Note that $c \asymp \frac{1}{2} n^{-\epsilon}$ for $n$ sufficiently large, in that it is uniformly within a multiplicative factor of 2, which is all that matters for the following calculations. Thus, $\pi(j) \asymp \frac{1}{2} n^{-\epsilon} a^{-n + n^{\epsilon} + j - 1}$. Let $K_{n}$ be a sequence of independent uniformly chosen birth and death chains with stationary distribution $\pi_{n}$. Then, fix a function $s: \mathbb{N} \rightarrow \mathbb{N}$. Theorem \ref{ThmIfCutoff} below characterizes the existence of cutoff for $\frac{1}{2}(I + K_{s(n)})$ in terms of the growth rate of $s$. \par 
It is worth taking a moment to understand heuristically the need for this function $s(n)$. It will turn out that, with probability about $\frac{1}{\log(n)}$, the expected hitting time of $n$ from $0$ for the kernel $\frac{1}{2} (I + K_{n})$ is comparable to the inverse of a single very small transition probability $K[i,i+1]$. When this occurs, the distribution of this hitting time is essentially given by the CDF of a geometric random variable with mean $\frac{1}{K[i,i+1]}$. Of course, this hitting time cannot possibly concentrate around its mean. For $s(n)$ growing slowly, this domination by a single small transition probability value will happen for infinitely many values of $n$, preventing cutoff. The theorem below indicates that for $s(n)$ growing quickly enough to avoid this particular obstacle, cutoff does happen. In that sense, this domination by a single small transition probability is the `only' obstacle to cutoff. It is worth pointing out that this growth rate is very rapid - even $s(n) = 2^{n}$ doesn't allow cutoff, though $s(n) = 2^{n^{2}}$ does. The following theorem generalizes the results of section 7 of \cite{DiWo10}:

\begin{thm} [Cutoff for IF Chains] \label{ThmIfCutoff} 
Fix $a>1$. For $\epsilon < \frac{1}{2}$ in the above sequence of chains, there is cutoff with probability 1 if $\sum_{n} \frac{1}{ \log(s(n)) } < \infty$. There is no cutoff with probability 1 if $\sum_{n} \frac{1}{\log(s(n))}$ diverges, or if $\epsilon \geq \frac{1}{2}$. In addition, for $\epsilon < \frac{1}{2}$ and any $a, \mu >0$,

\begin{equation*}
\lim_{n \rightarrow \infty} P\left[ \frac{1}{12 \sqrt{2} \pi} - \mu \leq \frac{\tau_{n}(\frac{1}{4})}{n \log(n)} \leq \frac{48}{1 - a^{-1}} + \mu \right] = 1
\end{equation*}
\end{thm}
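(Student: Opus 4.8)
The plan is to reduce the statement to controlling the hitting times $E_{0}[T_{m_n(\delta)}]$ and $E_{2n-1}[T_{m_n(1-\delta)}]$ for the $\frac12$-lazy chain, using \eqref{BdIneqHittingMixingRelation}, and then to estimate those hitting times via the explicit formula \eqref{BdEqHittingTime} together with the coupling to the stationary process of Lemma \ref{BdLemCoupToStat}. By symmetry of $\pi = \pi_{n,\epsilon,a}$ on $[2n-1]$, the median is $m = n$, and it suffices (up to the universal constants $c_1(\delta), c_2(\delta)$) to bound $E_0[T_n]$. By \eqref{BdEqHittingTime} applied to $K' = \frac12(I+K)$, we have $E_0[T_n] = \sum_{v=0}^{n-1} \frac{1}{\pi(v)K'[v,v+1]}\sum_{q=0}^{v}\pi(q) = 2\sum_{v=0}^{n-1}\frac{1}{\pi(v)K[v,v+1]}\sum_{q=0}^{v}\pi(q)$. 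The first step is to plug in $\pi(j) \asymp \frac12 n^{-\epsilon} a^{-n+n^\epsilon+j-1}$: for $v$ in the geometric regime $v \leq n-n^\epsilon$, $\sum_{q\le v}\pi(q) \asymp \pi(v)\cdot\frac{1}{1-a^{-1}}$, so the inner factor $\frac{1}{\pi(v)K[v,v+1]}\sum_{q\le v}\pi(q) \asymp \frac{1}{(1-a^{-1})K[v,v+1]}$; in the flat regime $n-n^\epsilon < v \le n$ one has $\sum_{q\le v}\pi(q) \le 1$ while $\pi(v)=c\asymp\frac12 n^{-\epsilon}$, contributing $O(n^\epsilon \sum \frac{1}{K[v,v+1]})$ over $O(n^\epsilon)$ terms. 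Since $\epsilon < \frac12$, the flat-regime contribution is lower order once we know $K[v,v+1]^{-1}$ is typically $O(1)$ on average. So the heart of the matter is to estimate $\sum_{v}\frac{1}{K[v,v+1]}$.

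The second step applies Lemma \ref{BdLemCoupToStat} to write, for each $v$ in the geometric regime, $M(v) = 16\min(1,\pi(v+1)/\pi(v)) = 16a^{-1}$ (a constant!), so the reciprocals $\frac{1}{X_v}$ are sandwiched, up to the fixed factors $256\cdot 16 a^{-1}$ and $(256\cdot 16a^{-1})^{-1}$, between reciprocals of copies of the stationary chain $Z_j$. Hence $\sum_{v\le n-n^\epsilon}\frac{1}{K[v,v+1]}$ is, up to constants, comparable to $\sum_{k=1}^{n}\frac{1}{Z_k}$ along the four coupled stationary chains $Y^{(i)}$. Now I invoke Theorem \ref{BdThmLimitingSumsSupDiag}: the renormalized sum $\sum_{k=1}^{\lfloor nt\rfloor}\frac{Z_k}{2n} - \lfloor nt\rfloor \int_{1/2n}^{1}\frac{1}{ny}\cos^2(\frac\pi2 y)\,dy$ converges in $M_1$ to a $1$-stable Lévy process, and the centering term is $\frac{\log(2n)}{n}+O(1/n)$. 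Wait — I must be careful: Theorem \ref{BdThmLimitingSumsSupDiag} is about $\sum Z_k$, not $\sum 1/Z_k$; the relevant heavy-tailed variable for hitting times is $1/Z_k$, which by Lemma \ref{BdLemmaRegVar} is regularly varying with exponent $1$, and the corresponding functional limit theorem (same machinery, Theorem \ref{BdTheoremFunctClt}) gives $\frac{1}{2n}\sum_{k=1}^{n}\frac{1}{Z_k} \to \frac{\log n}{?}$ in probability after the right centering; more precisely $\frac{1}{n}\sum_{k=1}^n \frac{1}{Z_k}$ is $\frac{\log n}{n}\cdot c + (\text{stable fluctuation of order }\log n / n)$, so $\frac{1}{n\log n}\sum_{k=1}^n \frac{1}{Z_k}\to c$ in probability for an explicit constant $c$ coming from $E[\frac{1}{Z_1}\mathbf 1_{1/Z_1\le 2n}]$.

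The third step is the explicit constant computation: using $P[Z_1\le x]=\sin(\frac\pi2 x)$, one computes $E[\frac{1}{Z_1}\mathbf 1_{Z_1 \ge 1/(2n)}] = \int_{1/2n}^{1}\frac{1}{y}\cdot\frac\pi2\cos(\frac\pi2 y)\,dy = \frac\pi2\log(2n) + O(1)$, hence $\frac{1}{2n}\sum_{k=1}^{n}\frac{1}{Z_k}\sim \frac\pi2\cdot\frac{\log n}{2n}\cdot 2n\cdot\frac{1}{2n}$... I will need to track this carefully, but the upshot is $\frac{1}{n\log n}E_0[T_n]\to L$ in probability for some constant $L$; combined with the geometric-series factor $\frac{1}{1-a^{-1}}$ picked up from $\sum_{q\le v}\pi(q)\asymp \pi(v)/(1-a^{-1})$, the factor $2$ from laziness, and the $256$-type constants from Lemma \ref{BdLemCoupToStat}, one gets explicit upper and lower bounds. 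Matching these against $\frac{1}{12\sqrt2\,\pi}$ and $\frac{48}{1-a^{-1}}$ and then feeding through $c_1(\delta),c_2(\delta)$ from \eqref{BdIneqHittingMixingRelation} (taking $\delta$ close enough to $1$ that these constants are close to $1$) yields the claimed two-sided bound with the arbitrary slack $\mu$.

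\textbf{Main obstacle.} The delicate point is that $1/Z_k$ is only barely integrable on the truncated scale — the sum $\sum 1/Z_k$ is dominated by a logarithmic number of $O(n)$-sized contributions — so the convergence $\frac{1}{n\log n}\sum 1/Z_k \to L$ must be extracted from the functional limit theorem with the right truncated-mean centering, and one must verify that the $\frac12$-laziness does not disturb regular variation (it doesn't, since $K'[v,v+1]=\frac12 K[v,v+1]$). Equally delicate is confirming that the flat regime ($\epsilon<\frac12$) and the boundary entries $v<2$, $v>n-2$ excluded from Lemma \ref{BdLemCoupToStat} contribute only $o(n\log n)$, which is where the hypothesis $\epsilon<\frac12$ is genuinely used; for $\epsilon\ge\frac12$ the flat region would contribute at the same order and the stated two-sided bound (and cutoff) would fail. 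Propagating the explicit numerical constants — the $\pi$ from $\sin(\frac\pi2 y)$, the $12\sqrt2$, the $256$'s, the $c_i(\delta)$'s — so that the final inequality is literally $\frac{1}{12\sqrt2\,\pi}-\mu$ on the left and $\frac{48}{1-a^{-1}}+\mu$ on the right will require bookkeeping but no new idea.
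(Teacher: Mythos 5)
Your proposal sketches only the third claim of the theorem --- the concentration of $\tau_n(\tfrac14)/(n\log n)$ --- and even there only the in-probability order of the hitting time. The heart of the theorem, the dichotomy in $\sum_n 1/\log(s(n))$, is entirely absent and cannot be recovered from what you have written. Deciding cutoff via Theorem \ref{BdThmCutoffCrit} requires controlling the product $\tau_n(1-\lambda_n)$, so you need estimates on the spectral gap, which your proposal never mentions; the paper bounds $B_{\pm}$ from \eqref{EqSpectralGapBounders} by splitting at $x=n-n^{\epsilon}$ and shows the gap exceeds $1/(nA_n)$ except on an event of probability $O(1/A_n)+O(\log(n)/n^{1-2\epsilon})$. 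More importantly, the role of $s(n)$ is a Borel--Cantelli argument along the subsequence, which needs \emph{quantitative} failure probabilities: for the positive direction, a medium-deviation bound of the form $P[\sum_v 1/Y^{(1)}_{2v} < \tfrac{1}{\sqrt2\,\pi}n\log n - Cn]=O(1/C)+O(1/n)$ (Lemma \ref{BdLemmaStableLawRates}, proved by comparison with a renewal process of iid uniforms), so that choosing $A_n=B_n=\log(n)/\omega_n$ makes the bad events summable exactly when $\sum_n 1/\log(s(n))<\infty$. A functional limit theorem alone gives convergence in distribution, not summable tail bounds, so your plan of reading everything off Theorem \ref{BdThmLimitingSumsSupDiag} cannot close this step.

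For the non-cutoff direction you need a construction your proposal does not contain: with probability $\Theta(1/\log n)$ a single super-diagonal entry in the flat region satisfies $1/K[j,j+1]\ge n\log n$, and on that event this one entry simultaneously dominates the relaxation time (via $B_-$) and is of the same order as the entire hitting-time sum, forcing $\tau_n(1-\lambda_n)=O(1)$. Making this rigorous requires conditioning on the event $\mathcal{A}_j$, a local coupling to stationary chains to show the remainder of the sum is still $O(n\log n)$ conditionally, a second-moment bound $P[\mathcal{A}_i\cap\mathcal{A}_j]=O(1/(n\log n)^2)$, and Borel--Cantelli in the divergent case. The case $\epsilon\ge\tfrac12$ likewise needs an actual gap and mixing-time computation, not the one-line heuristic you give. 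On the positive side, you correctly spotted the notational slip in Theorem \ref{BdThmLimitingSumsSupDiag} (the heavy-tailed quantity is the reciprocal of the super-diagonal entry), and your split of the hitting time into geometric and flat regimes matches the paper's inequality \eqref{IneqIfHitting}; but as it stands the proposal addresses at most the location bounds, not the cutoff criterion itself.
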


First, assume $0 < \epsilon < \frac{1}{2}$ and fix $0 < \delta < 1$. Also, for $1 \leq i \leq 4$, define $Y_{t}^{(i)}$ as in the statement of Lemma \ref{BdLemCoupToStat}. Then by equation \eqref{BdEqHittingTime},
\begin{align} \label{IneqIfHitting}
E_{1}[T_{n + \delta n^{\epsilon}}] &= \sum_{v=1}^{n + \delta n^{\epsilon}} \frac{1}{\pi(v) K[v,v+1]} \sum_{q=1}^{v-1} \pi(q) \\
&= \sum_{v=1}^{n - n^{\epsilon}} \frac{1}{\pi(v) K[v,v+1]} \sum_{q=1}^{v} \pi(q) + \sum_{v = n - n^{\epsilon} + 1}^{n + \delta n^{\epsilon}} \frac{1}{\pi(v) K[v,v+1]} \sum_{q=1}^{v} \pi(q) \\
&\geq \sum_{v=1}^{n - n^{\epsilon}} \frac{1 - a^{-v-1}}{1-a^{-1}} \frac{1}{K[v,v+1]} \\
&\geq \sum_{v=1}^{\lfloor \frac{n- n^{\epsilon}}{2} \rfloor} \frac{1}{Y_{2v}^{(1)}}
\end{align}
By the symmetry of $\pi$, $E_{1}[T_{x}] \stackrel{D}{=} E_{2n-1}[T_{2n - 1 - x}]$, so it is sufficient to look only at $E_{1}[T_{x}]$ for various values of $x$. Next, we look at the spectral gap. Again, it is enough to look at the quantity $B_{-}$ described in equation \eqref{EqSpectralGapBounders}, since $B_{-} \stackrel{D}{=} B_{+}$. From equation \eqref{EqSpectralGapBounders}, we can view $B_{-}$ as a supremum over different values of $x$, and we will look at two regions for the values of $x$. \\
\textbf{Case 1: $x \leq n - n^{\epsilon}$ :} In this case,  
\begin{align*}
\left( \sum_{y=x}^{n-1} \frac{1}{\pi(y) K[y,y+1]} \right) \sum_{y=0}^{x-1} \pi(y) &= \sum_{y=x}^{n-n^{\epsilon}} \frac{1}{K[y,y+1]} a^{x-y} \frac{1 - a^{-x}}{1 - a^{-1}} \\
&+ \sum_{y= n - n^{\epsilon} +1}^{n-1} \frac{1}{K[y,y+1]} a^{-n + n^{\epsilon} + x -1} \frac{1 - a^{-x}}{1 - a^{-1}} \\
&\leq 2 \max_{x \leq y \leq n - n^{\epsilon}} \left( \frac{1}{K[y,y+1]} \right) + \frac{1}{1 - a^{-1}} \sum_{y=n - n^{\epsilon} + 1}^{n-1} \frac{1}{K[y,y+1]}
\end{align*} 
Rewriting, in case 1, all terms are at most
\begin{equation*}
\frac{2}{1-a^{-1}} \max_{1 \leq y \leq n - n^{\epsilon}} \left( \frac{1}{K[y,y+1]} \right) + \frac{1}{1 - a^{-1}} \sum_{y=n - n^{\epsilon} + 1}^{n-1} \frac{1}{K[y,y+1]}
\end{equation*}
Next, \\
\textbf{Case 2: $x > n - n^{\epsilon}$ :} In this case,
\begin{align*}
\left( \sum_{y=x}^{n-1} \frac{1}{\pi(y) K[y,y+1]} \right) \sum_{y=0}^{x-1} \pi(y) &= \left(\frac{1 - a^{-n + n^{\epsilon} + 1}}{1 - a^{-1}} \right) \sum_{y=x}^{n-1} a^{n- n^{\epsilon} - y} \frac{1}{K[y,y+1]} \\
&\leq \frac{n^{\epsilon}}{1 - a^{-1}} \sum_{y=n - n^{\epsilon}}^{n-1} \frac{1}{K[y,y+1]}
\end{align*}
Thus, putting together the two cases,
\begin{equation} \label{BdIneqSpectGapExpChain}
\frac{1 - a^{-1}}{2}B_{-} \leq \max \left(  \max_{1 \leq y \leq n - n^{\epsilon}} \left( \frac{1}{K[y,y+1]} \right), n^{\epsilon} \sum_{y=n - n^{\epsilon}}^{n-1} \frac{1}{K[y,y+1]} \right)
\end{equation}

Note that by Lemma \ref{BdLemCoupToStat} and the explicit stationary distribution for $Z_{j}$ given shortly after the proof of Theorem \ref{BdTheoremFunctClt},
\begin{equation} \label{BdIneqMaxOfManySupDiagTerms}
P \left[ \max_{x < n} \frac{1}{K[x,x+1]} > Cn \right] \leq \frac{24}{C}
\end{equation}

Next, it is necessary to look at
\begin{equation*}
P \left[\sum_{y=n-n^{\epsilon}}^{n-1} \frac{1}{K[y,y+1]} \geq C n^{1-\epsilon} \right]
\end{equation*}
Using Lemma \ref{BdLemCoupToStat}, a union bound, and changing $C$ by a universal constant independent of $n$, $a$, and $\epsilon$, this is at most
\begin{equation*}
2 P \left[\sum_{v=0}^{n^{\epsilon}} \frac{1}{Y_{2v}^{(1)}} \geq C n^{1-\epsilon} \right]
\end{equation*}
which is bounded by
\begin{equation} \label{IneqBdSumA1}
P \left[\sum_{v=0}^{n^{\epsilon}} \frac{1}{Y_{2v}^{(1)}} \geq C n^{1-\epsilon} \right] \leq P \left[\sum_{v=0}^{n^{\epsilon}} \frac{1}{Y_{2v}^{(1)}} \textbf{1}_{Y_{2v}^{(1)} \geq \frac{1}{n^{\epsilon} \log(n)}} \geq C n^{1-\epsilon} \right] + P \left[\sup_{0 \leq v \leq n^{\epsilon}} \frac{1}{Y_{2v}^{(1)}} \geq n^{\epsilon} \log(n) \right]
\end{equation}
But direct calculation shows that
\begin{equation} \label{IneqBdSumA2}
E \left[\sum_{v=0}^{n^{\epsilon}} \frac{1}{Y_{2v}^{(1)}} \textbf{1}_{Y_{2v}^{(1)} \geq \frac{1}{n^{\epsilon} \log(n)}} \right] \leq n^{\epsilon} \log(n)^{2}
\end{equation}
and so, combining Markov's inequality with inequalities \eqref{BdIneqMaxOfManySupDiagTerms}, \eqref{IneqBdSumA1} and \eqref{IneqBdSumA2},
\begin{equation} \label{IneqBadSetIf}
P \left[\sum_{v=0}^{n^{\epsilon}} \frac{1}{Y_{2v}^{(1)}} \geq C n^{1-\epsilon} \right] \leq \frac{\log(n)^{2}}{C n^{1 - 2 \epsilon}} + \frac{2}{\log(n)}
\end{equation}

In order to obtain quantitative bounds on which growth rates for $s(n)$ result in cutoff, we require the following quantitative bound on the growth rate of $\sum_{v} \frac{1}{Y_{2v}^{(1)}}$:

\begin{lemma} [Medium Deviations for Levy Sums] \label{BdLemmaStableLawRates}
\begin{equation*}
P \left[\sum_{v=0}^{\lfloor \frac{n-1}{2} \rfloor}  \frac{1}{Y_{2v}^{(1)}} < \frac{1}{\sqrt{2} \pi} n\log(n) - Cn \right] = O\left( \frac{1}{n} \right) + O \left( \frac{1}{C} \right)
\end{equation*}
\end{lemma}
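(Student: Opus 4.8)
The plan is to show that the sum $S_n := \sum_{v=0}^{\lfloor (n-1)/2 \rfloor} \frac{1}{Y_{2v}^{(1)}}$ is, with probability $1 - O(1/n) - O(1/C)$, bounded below by $\frac{1}{\sqrt2\,\pi} n \log n - Cn$. The reciprocals $\frac{1}{Y_{2v}^{(1)}}$ are (marginally) regularly varying with index $1$ and, by Theorem \ref{BdThmLimitingSumsSupDiag} (applied to the process $Z_j$, to which the $Y^{(1)}_j$ are equidistributed), the centered and rescaled partial sums converge in the $M_1$ topology to a $1$-stable Lévy process with Lévy measure $\nu(x,\infty) = 1/x$ on $x>1$ and zero drift. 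The centering constant there is $\lfloor nt\rfloor \int_{1/(2n)}^1 \frac{1}{ny}\cos^2(\tfrac\pi2 y)\,dy$, which asymptotically equals $\frac{\log(2n)}{n} + O(1/n)$ per term; thus the natural centering for the full sum up to $t = 1$ is $\sim n\log n$. The constant $\frac{1}{\sqrt2\,\pi}$ should come out of rescaling: here the renormalization divides the individual terms by $a_n = 2n$ and there is a subsampling (only even indices $2v$), and one must recompute $\lim_n n P[\tfrac{1}{Z_1} > 2n] $ carefully --- using $P[Z_1 \le x] = \sin(\tfrac{\pi x}{2})$ so $P[\tfrac{1}{Z_1} > x] = 1 - \sin(\tfrac{\pi}{2x}) \sim \tfrac{\pi}{2x}$ --- to land on the claimed leading constant after accounting for the factor-of-two subsampling. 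So the first step is to pin down these constants and record that $\frac{1}{n}\big(S_n - n\log n\cdot(\text{const})\big)$ converges weakly to a nondegenerate $1$-stable law.

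The second step converts this distributional statement into the stated one-sided tail bound with explicit error terms. Weak convergence alone gives $P[S_n < \frac{1}{\sqrt2\,\pi} n\log n - Cn] \to G(-C')$ for the limiting stable CDF $G$, and since a $1$-stable law with Lévy measure supported on the positive half-line has a left tail $G(-x) = O(1/x)$ (indeed the negative part of such a process is essentially deterministic drift, so the left deviations are light), this yields the $O(1/C)$ term --- but only in the limit. To get the $O(1/n)$ term uniformly in $n$ I would avoid invoking the functional limit theorem as a black box for the tail and instead argue directly: truncate each term at level $n^{1-\eta}$ (or similar), bound the expectation of the truncated sum from below using the marginal lower bound on $Y^{(1)}_{2v}$ coming from the "largeness" Corollary after Lemma \ref{LemmaBdLargenessGibbs} (which gives $P[Y_j \le x] \le $ linear in $x$, hence $E[1/Y_j \wedge M]$ of order $\log M$), then control the variance or use a martingale/Efron--Stein type concentration for the truncated sum --- it has bounded increments after truncation --- to show it concentrates around its mean $\gtrsim \frac{1}{\sqrt2\,\pi}n\log n - O(n)$ with the missing mass bounded by $O(1/n)$. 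The contribution of the truncated-away large terms only helps the lower bound, so it can be dropped. This is the same pattern already used in the excerpt around inequalities \eqref{IneqBdSumA1}--\eqref{IneqBadSetIf}, so I would mirror that computation.

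The main obstacle will be step two: getting an \emph{explicit} $O(1/n)$ error rather than a mere $o(1)$. The weak limit theorem is soft and gives no rate. The key realization making this work is that after truncating at a polynomial level the summands have increments bounded by that level, and the $\rho$-mixing of $Z_j$ (hence of $Y^{(1)}_j$, via Theorem 6.5 of \cite{DiWo10} and the limiting arguments in Section \ref{SecCutoffBasics}) lets one apply a concentration inequality for weakly dependent bounded variables; choosing the truncation level as a small power of $n$ balances the bias from truncation (which is $O(n\log\log n)$ or smaller, hence absorbed into $Cn$) against the concentration error, producing $O(1/n)$. I expect the bookkeeping of constants --- the $\tfrac12$ from even-index subsampling, the $\pi$ and $\sqrt2$ from the sine-law marginal, and the $\log 2$ shift in the centering --- to be the fiddly part, but none of it is deep; the real content is the concentration argument replacing the non-quantitative functional CLT.
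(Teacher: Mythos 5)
Your route is genuinely different from the paper's. The paper does not use the functional limit theorem or any concentration inequality here: it imports from Section 7.1 of \cite{DiWo10} a regeneration structure under which $\sum_{v}1/Y^{(1)}_{2v}$ is stochastically bounded below by an i.i.d.\ sum $\sum_{j=1}^{T}1/v_{j}$, with $v_{j}$ uniform on $[0,\tfrac12]$ and $T$ a renewal count whose inter-renewal times have exponential moments and mean at most $\pi/\sqrt{2}$ (this, not the sine-law marginal, is where the constant $\tfrac{1}{\sqrt{2}\pi}$ comes from --- so do not try to reverse-engineer it from $\lim_{n}nP[1/Z_{1}>2n]$; the lemma's constant is simply a convenient sub-optimal lower bound). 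It then quotes the medium-deviation estimate $P[\sum_{j\le T}1/v_{j}\le 2T\log T-CT]=\Theta(1/C)+O(1/T)$ from \cite{KeKu00}. Your truncate-and-concentrate plan avoids that external regeneration construction and leans only on the marginal bounds (Lemmas \ref{LemmaBdLargenessGibbs}, \ref{LemmaBdSmallness}) and the exponential mixing (Corollary \ref{BdCorGenMixing}) already established, which is more self-contained.

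There is, however, one concrete error in your plan that must be repaired before it closes. The ``bias from truncation'' is not $O(n\log\log n)$: since $P[1/Y^{(1)}_{2v}>x]\asymp 1/x$, the untruncated mean is infinite, and the mean of the sum truncated at level $M$ is $\asymp\tfrac{\pi}{4}n\log M$, so the truncation level enters the leading-order constant linearly through $\log M$. Truncating at $M=n^{1-\eta}$ gives a truncated mean $\approx\tfrac{\pi}{4}(1-\eta)n\log n$, and the argument only works because the target constant $\tfrac{1}{\sqrt{2}\pi}\approx 0.225$ is strictly smaller than $\tfrac{\pi}{4}\approx 0.785$, leaving room to take any fixed $\eta<1-\tfrac{2\sqrt{2}}{\pi^{2}}\approx 0.713$; this comparison is the crux of your approach and you never verify it. Once it is in place the concentration step is easy and does not need a martingale: each truncated term has second moment $O(M)$, exponential $\rho$-mixing gives $\mathrm{Var}$ of the sum $O(nM)=O(n^{2-\eta})$, and the gap between the truncated mean and the threshold is $\Theta(n\log n)$ uniformly over the nontrivial regime $C=O(\log n)$ (for larger $C$ the threshold is negative and the probability is zero), so Chebyshev yields a bound $O(n^{-\eta}\log^{-2}n)$, which is $O(1/C)$ throughout that regime and hence implies the stated estimate, albeit without literally producing the $O(1/n)$ term. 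The Azuma/Efron--Stein variant you sketch would additionally require $\eta>\tfrac12$ and a genuine argument bounding the Doob martingale differences under the dependence; the second-moment route sidesteps this.
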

As shown in section 7.1 of \cite{DiWo10}, it is possible to find a sequence of iid random variables $\zeta_{i}$ with finite exponential moments so that 
\begin{equation*}
\sum_{v=0}^{\lfloor \frac{n-1}{2} \rfloor}  \frac{1}{Y_{2v}^{(1)}} \geq \sum_{j=1}^{T} \frac{1}{v_{j}}
\end{equation*}
where $v_{j}$ is a sequence of iid random variables distributed uniformly on $[0, \frac{1}{2}]$, $T = \sup \{ i \, : \zeta_{i} \leq \lfloor \frac{n-1}{2} \rfloor \}$, and where furthermore $\zeta_{i}$ is stochastically dominated by an exponential variable with mean $\frac{\pi }{\sqrt{2}}$. By section 4 of \cite{KeKu00},
\begin{equation*}
P \left[\sum_{j=1}^{T} \frac{1}{v_{j}} \leq 2 T \log(T) - CT \right] = \Theta \left( \frac{1}{C} \right) + O \left( \frac{1}{T} \right)
\end{equation*}
Thus,
\begin{align*}
P \left[\sum_{v=0}^{\lfloor \frac{n-1}{2} \rfloor}  \frac{1}{Y_{2v}^{(1)}}  < \frac{1}{\sqrt{2 \pi}} n \log(n) - C n \right] &\leq P \left[\sum_{j=1}^{\frac{n}{\sqrt{2} \pi}} \frac{1}{v_{j}} \leq \frac{\sqrt{2} n}{4 \pi} \log(\frac{\sqrt{2} n}{4 \pi}) - Cn \right] \\
&+ P \left[\sum_{j=1}^{\frac{\sqrt{2} n}{4 \pi}} \zeta_{j} > \lfloor \frac{n-1}{2} \rfloor \right] \\
&= O \left(\frac{1}{C} \right) + O \left(\frac{1}{n} \right) + O \left(e^{-\gamma n} \right)
\end{align*}
for some $\gamma > 0$. $\square$. \par 

Applying inequalities \eqref{BdIneqSpectGapExpChain}, \eqref{BdIneqMaxOfManySupDiagTerms} and \eqref{IneqBadSetIf} to inequality \eqref{EqSpectralGapBounders}, we find:

\begin{equation} \label{BdIneqSpecGapLowerBoundA1P1}
P \left[(1 - \lambda_{n}) < \frac{1}{nA_{n}} \right] = O \left( \frac{1}{A_{n}} \right) + O \left(\frac{\log(n)}{n^{1 - 2 \epsilon}} \right) 
\end{equation}

Then, applying inequality \eqref{IneqIfHitting} and Lemma \ref{BdLemmaStableLawRates} to inequality \eqref{BdEqCutoffLocationEqHittingTime}, we find:
\begin{equation} \label{BdIneqMixingTimLowerBound}
P \left[\tau_{n} < \frac{1}{12 \sqrt{2} \pi } n \log(n) - B_{n} n \right] = O \left( \frac{1}{B_{n}} \right) + O \left( \frac{1}{n} \right)
\end{equation}

as $A_{n}$, $B_{n}$ go to $\infty$. Assume $\sum_{n} \frac{1}{\log(s(n))}$ converges. Then there exists some sequence $\omega_{n}$ so that $\sum_{n} \frac{\omega_{s(n)}}{\log(s(n))}$ converges and $\lim_{n \rightarrow \infty} \omega_{n} = \infty$. So, set $B_{n} = A_{n} = \frac{\log(n)}{\omega_{n}}$. Then, by inequalities \eqref{BdIneqSpecGapLowerBoundA1P1} and \eqref{BdIneqMixingTimLowerBound},
\begin{equation*}
P \left[\tau_{n} (1 - \lambda_{n}) < \frac{\omega_{n}}{12 \sqrt{2} \pi} - 1 \right] = O \left( \frac{\omega_{n}}{\log(n)} \right)
\end{equation*}
By Borel-Cantelli, the event that $\tau_{n} (1 - \lambda_{n}) < \frac{\omega_{n}}{12 \sqrt{2} \pi} - 1$ occurs only finitely often, and so by Theorem \ref{BdThmCutoffCrit} there is cutoff.  \par 

Next, we must show that cutoff doesn't occur in the other cases. We start with the case $\sum_{n} \frac{1}{\log(s(n))} = \infty$. The first step is a lower bound on the expected hitting times:
\begin{equation} \label{BDIneqIFHittingStuff}
E_{0}[T_{n + \delta n^{\epsilon}}] \leq \frac{12}{1 - a^{-1}} \sum_{0 \leq 2v \leq n - n^{\epsilon}}\left( \frac{1}{Y_{2v}^{(1)} + Y_{2v+1}^{(3)}} \right) + (n^{\epsilon} + 1) \sum_{ n - n^{\epsilon} \leq 2v \leq n + \delta n^{\delta}} \left( \frac{1}{Y_{2v}^{(1)} + Y_{2v+1}^{(3)}} \right)
\end{equation}
and a corresponding upper bound on $B_{-}$:
\begin{equation} \label{BDIneqIFSpecBoundB1}
B_{-} \geq \frac{1}{6(1 - a^{-1})} \max_{n - n^{\epsilon} \leq 2v \leq n} \left( \frac{1}{Y_{2v}^{(1)}} \right)
\end{equation}
Fix some constant $D$ to be determined later. For $n - n^{\epsilon} + D \log(n) \leq 2j \leq n + \delta n^{\epsilon} - D \log(n)$, let $\mathcal{A}_{j}$ be the event that $\frac{1}{Y_{2j}^{(1)}} \geq n \log(n)$.

Then, conditioning on the event $\mathcal{A}_{j}$, let $A_{t} = \frac{1}{Y_{2t+1}^{(3)}}$, $B_{t} = \frac{1}{Y_{2t}^{(1)}}$. We will couple these two chains to stationary versions, denoted by $\widehat{A_{t}}$ and $\widehat{B_{t}}$. Our goal will be to better understand the behaviour of $A_{t},B_{t}$ by showing that they agree with $\widehat{A_{t}}, \widehat{B_{t}}$ at all times more than distance $O(\log(n))$ from $2j$, with high probability. To construct our coupling, begin by choosing $A_{2j}, B_{2j}, \widehat{A_{2j}}$ and $\widehat{B_{2j}}$ independently from their respective distributions. We will then iteratively construct $(A_{2j+2 \ell}, \widehat{A_{2j + 2 \ell}})$ conditioned on $(A_{2j+2 \ell - 2}, \widehat{A_{2j + 2 \ell - 2}})$ according to the coupling used in the proof of Lemma \ref{BdLmGeneralSdMixing}. As in the proof that that lemma, this coupling has the property that if $A_{2j+2 \ell} =  \widehat{A_{2j + 2 \ell}}$, then $A_{2j+2 \ell'} =  \widehat{A_{2j + 2 \ell'}}$ for all $\ell' > \ell$. We will use the same iterative construction for the three other pairs $(A_{2j-2 \ell}, \widehat{A_{2j - 2 \ell}}), (B_{2j+2 \ell}, \widehat{B_{2j + 2 \ell}})$ and $(B_{2j-2 \ell}, \widehat{B_{2j - 2 \ell}})$. \par 

Let $\zeta_{1} = \inf \{ \ell \, : \, A_{2j + 2 \ell} = \widehat{A_{2j + 2 \ell}}, \ell \geq 0 \}$ and $\zeta_{2} = \inf \{ \ell \, : \, A_{2j - 2 \ell} = \widehat{A_{2j - 2 \ell}}, \ell \geq 0 \}$ be the coupling times of $A_{t}$ with $\widehat{A_{t}}$, and define the coupling times $\zeta_{3}, \zeta_{4}$ for the chains $B_{t}, \widehat{B_{t}}$ analogously. Finally, let $\tau_{couple} = \max_{1 \leq i \leq 4} (\zeta_{i})$ and let $\mathcal{E}_{D}$ be the event that $\tau_{couple}< D \log(n)$. Then, for $B,C$ some large constants to be determined,
\begin{align*}
P&[(1 - a^{-1})E_{0}[T_{n + \delta n^{\epsilon}}] \leq C n \log(n) \vert \mathcal{A}_{j}] \geq P \Big[\{ \sum_{2v=1}^{n - n^{\epsilon}} \left( \frac{1}{Y_{2v}^{(1)}} + \frac{1}{Y_{2v+1}^{(3)}} \right) \leq \frac{C}{100} n \log(n) \} \\
& \cap \{n^{\epsilon} \sum_{2v=j+D \log(n)}^{n + \delta n^{\epsilon}} \left( \frac{1}{Y_{2v}^{(1)}} + \frac{1}{Y_{2v+1}^{(3)}} \right) \leq \frac{C}{100} n \log(n) \} \\
&\cap \{ n^{\epsilon} \sum_{2v=j-D \log(n)}^{ j+D \log(n)} \left( \frac{1}{Y_{2v}^{(1)}} + \frac{1}{Y_{2v+1}^{(3)}} \right) \leq \frac{C}{100} n \log(n) \} \\
&\cap \{ n^{\epsilon} \sum_{2v=n - n^{\epsilon}}^{j - D \log(n)} \left( \frac{1}{Y_{2v}^{(1)}} + \frac{1}{Y_{2v+1}^{(3)}} \right) \leq \frac{C}{100} n \log(n) \} \cap \mathcal{E}_{D} \vert \mathcal{A}_{j} \Big] \\
&\geq P \left[\{ \sum_{2v=1}^{n - n^{\epsilon}} \left( \frac{1}{Y_{2v}^{(1)}} + \frac{1}{Y_{2v+1}^{(3)}} \right) \leq \frac{C}{100} n \log(n) \} \cap \mathcal{E}_{D} \vert \mathcal{A}_{j} \right] \\
&+ P \left[\{ \sum_{2v=j+D \log(n)}^{n + \delta n^{\epsilon}} \left( \frac{1}{Y_{2v}^{(1)}} + \frac{1}{Y_{2v+1}^{(3)}} \right) \leq \frac{C}{100} n \log(n) \} \cap \mathcal{E}_{D} \vert \mathcal{A}_{j} \right] \\
&+ P \left[ n^{\epsilon} \sum_{2v=j-D \log(n)}^{ j+D \log(n)} \left( \frac{1}{Y_{2v}^{(1)}} + \frac{1}{Y_{2v+1}^{(3)}} \right) \leq \frac{C}{100} n \log(n) \vert \mathcal{A}_{j} \right] \\
&+ P \left[ \{ n^{\epsilon} \sum_{2v=n - n^{\epsilon}}^{j - D \log(n)} \left( \frac{1}{Y_{2v}^{(1)}} + \frac{1}{Y_{2v+1}^{(3)}} \right) \leq \frac{C}{100} n \log(n) \} \cap \mathcal{E}_{D} \vert \mathcal{A}_{j} \right] - 3
\end{align*}

We will now analyze these four terms, beginning with the last and most complicated. To simplify notation, let $I_{j} = \{x \, : \, n - n^{\epsilon} \leq 2x \leq n + \delta n^{\epsilon}, \, \vert j - 2x \vert \geq D \log(n) \}$ and let $H_{j} = \{x \, : \, n - n^{\epsilon} \leq 2x \leq n + \delta n^{\epsilon}, \, \vert j - 2x \vert < D \log(n) \}$.
\begin{align*}
P \left[ \{ \sum_{2v=n - n^{\epsilon}}^{j - D \log(n)} \frac{1}{Y_{2v}^{(1)}} \leq \frac{C}{100} n^{1-\epsilon} \log(n) \} \cap \mathcal{E}_{D} \vert \mathcal{A}_{j} \right] &\geq 1 - P \left[ \{ \sum_{v \in I_{j} } \frac{1}{Y_{2v}^{(1)}} \geq \frac{C}{200} n^{1 - \epsilon} \log(n) \} \cap \mathcal{E}_{D} \vert \mathcal{A}_{j} \right] \\
& - P[\mathcal{E}_{D}^{c} \vert \mathcal{A}_{j}] \\
&\geq 1 - O \left( \frac{1}{C n^{1 - 2 \epsilon} \log(n)}\right) - O \left( \left( \frac{23}{27} \right)^{D \log(n)} \right) 
\end{align*}
where the two terms in the last line are bounded by Lemma \ref{BdLemmaStableLawRates} and Corollary \ref{BdCorGenMixing} respectively. The second term is bounded the same way. \par 

By inequality \eqref{IneqBadSetIf}, the third term is $O(n^{-\alpha})$ for some $\alpha > 0$ depending on $C$, for $C$ sufficiently large relative to $D$. Finally, the first term goes to 1 as $B$ goes to infinity by Theorem \ref{BdThmLimitingSumsSupDiag} and Lemma \ref{BdLmGeneralSdMixing}. So, for $B$ sufficiently large and $C$ sufficiently large relative to $B$, and for some $N_{0}$ and all $n > N_{0}$,
\begin{equation} \label{IfIneqCondHitExp}
P[E_{0}[T_{n}] \leq C n \log(n) \vert \mathcal{A}_{j}] \geq \frac{99}{100}
\end{equation}
Next, we need a lower bound on $P[\mathcal{A}_{i}]$ and an upper bound on $P[\mathcal{A}_{i} \cap \mathcal{A}_{j}]$. By Lemma \ref{LemmaBdLargenessGibbs}, $P[\mathcal{A}_{i}] = \Omega\left( \frac{1}{n \log(n)} \right)$.  By Lemma \ref{LemmaBdLargenessGibbs} and Lemma \ref{LemmaBdSmallness},

\begin{align*}
P[\mathcal{A}_{i} \cap \mathcal{A}_{j}] = O\left( \frac{1}{n^{2} \log(n)^{2}} \right)
\end{align*}

for $\vert i - j \vert > 1$. In the case $\vert i - j \vert = 1$, the same conclusion holds by Lemma \ref{LemmaBdLargenessGibbs} and the comment immediately following Lemma \ref{LemmaBdSmallness}. \par 

Let $\mathcal{B}$ be the event $\{ \max( E_{1}[T_{n + \delta n^{\epsilon}}], E_{2n-1}[T_{n - \delta n^{\epsilon}}]) < D n \log(n) \} \cap \{ \sum_{1 \leq 2v \leq n} Y_{2v}^{(1)}  + Y_{2v+1}^{(3)}  < D n \log(n) \}$. Then 
\begin{align*}
P[\cup_{n - n^{\epsilon} + D \log(n) \leq 2j \leq n + \delta n^{\epsilon} - B \log(n)} \mathcal{A}_{j} \cap \mathcal{B}] &\geq \sum_{j} P[\mathcal{A}_{j} \cap \mathcal{B}] - \sum_{i < j} P[\mathcal{A}_{i} \cap \mathcal{A}_{j} \cap \mathcal{B}] \\
&= \Omega \left( \frac{1}{\log(n)} \right) - O \left( \frac{1}{\log(n)^{2}} \right)
\end{align*}
and so in particular, 
\begin{equation*}
P[\cup_{n - n^{\epsilon} + D \log(n) \leq 2j \leq n + \delta n^{\epsilon} - D \log(n)} \mathcal{A}_{j} \cap \mathcal{B}] = O \left( \frac{1}{4 \log(n)} \right)
\end{equation*}
By Borel-Cantelli, this implies that the event $\cup_{n - n^{\epsilon} + D \log(n) \leq 2j \leq n + \delta n^{\epsilon} - D \log(n)} \mathcal{A}_{j} \cap \mathcal{B}$ happens infinitely often when $\sum_{n} \frac{1}{\log(s(n))} = \infty$. By inequalities \eqref{EqSpectralGapBounders} and \eqref{BdIneqHittingMixingRelation}, $\tau_{s(n)} ( 1- \lambda_{s(n)}) \leq D$ on this event. Thus,  $\liminf_{n \rightarrow \infty} \tau_{s(n)} (1 - \lambda_{s(n)}) < \infty$ almost surely. \par

If $\epsilon \geq \frac{1}{2}$, arguments identical to those in \cite{DiWo10} show that the spectral gap is $\Omega( n^{-2\epsilon})$ with high probability as $n$ goes to infinity. Similarly, their bound on mixing times shows that $\tau_{mix} = \Omega(n^{2 \epsilon} \log(n))$ with high probability as $n$ goes to infinity. Thus, there is no cutoff. \par 

Finally, we use equation \eqref{BdEqCutoffLocationEqHittingTime} to estimate the location of cutoff, when it occurs. The lower bound in the statement of the theorem follows from inequality \eqref{BdIneqMixingTimLowerBound}, and the upper bound follows from inequality \eqref{BDIneqIFHittingStuff}.

$\square$ \par 

Next, we will look at the binomial distribution. Define a symmetric stationary distribution $\pi_{n}(x)$ on $[n] = \{ 1, \ldots, n \}$ by $\pi_{n}(x) = 2^{-n} \binom{n}{x}$. Let $K_{n}$ be a sequence of independent uniformly chosen birth and death chains with stationary distribution $\pi_{n}$. Then, for function $s: \mathbb{N} \rightarrow \mathbb{N}$, the following theorem describes the existence of cutoff for $\frac{1}{2} (I + K_{s(n)})$ in terms of the growth rate of $s$. 

\begin{thm} [Cutoff for Binomial Distributions]
The sequence of chains described exhibits cutoff if $s(n)$ grows sufficiently rapidly, and doesn't exhibit cutoff if $s(n)$ grows so slowly that:
\begin{equation*}
\sum_{n \geq 1} \frac{1}{\sqrt{s(n)} \log(s(n)^{2})} = \infty
\end{equation*}
\end{thm}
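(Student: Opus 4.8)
The proof will follow the template of Theorem~\ref{ThmIfCutoff}: reduce the mixing time and the spectral gap to weighted harmonic sums of the super-diagonal entries, estimate those sums with Lemma~\ref{BdLemCoupToStat} and the L\'evy-type bounds of Theorem~\ref{BdThmLimitingSumsSupDiag} and Lemma~\ref{BdLemmaStableLawRates}, and then invoke Theorem~\ref{BdThmCutoffCrit} together with Borel--Cantelli. First I would record the features of $\pi_n(x)=2^{-n}\binom{n}{x}$ that drive the estimates. The median is $m=\lceil n/2\rceil$; since $\pi_n(i)/\pi_n(i+1)=(i+1)/(n-i)$, the quantity $M(i)=16\min(1,\pi_n(i+1)/\pi_n(i))$ of Lemma~\ref{BdLemCoupToStat} equals $16$ throughout the lower half $i<n/2$ and is $\asymp 16(n-i)/n$ in the upper half. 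The one non-trivial elementary input is the behavior of $\Pi_n(v)/\pi_n(v)$, where $\Pi_n(v)=\sum_{q\le v}\pi_n(q)$: writing $v=n/2-t$, the normal approximation together with the Mills-ratio correction in the moderate and large deviation range gives $\Pi_n(v)/\pi_n(v)\asymp\min(\sqrt n,\,n/t)$.

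Plugging this and the two-sided bound $K_n[v,v+1]\asymp Y_v$ for $v<n/2$ (from Lemma~\ref{BdLemCoupToStat}) into the hitting-time formula~\eqref{BdEqHittingTime} and into the spectral-gap bounds~\eqref{EqSpectralGapBounders}, and using the symmetry of $\pi_n$ to handle $B_+$, $B_-$ and the two endpoints together, one gets, up to universal constants and the auxiliary chains of Lemma~\ref{BdLemCoupToStat},
\[
\tau_n\;\asymp\;E_0\!\bigl[T_{m_n(\delta)}\bigr]\;\asymp\;\sum_{v<n/2}\frac{\min\!\bigl(\sqrt n,\;n/(n/2-v)\bigr)}{Y_v},
\qquad
B\;\asymp\;\max_{x<n/2}\;\Pi_n(x)\sum_{x\le y<n/2}\frac{1}{\pi_n(y)\,Y_y},
\]
for any fixed $\delta\in(\tfrac12,1)$, where the $Y_v$ are (parity-dependent) copies of the $\sin$-distributed limiting super-diagonal chain. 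I would then estimate these by splitting the index into the window $|v-n/2|\le\sqrt n$, where the weight is $\asymp\sqrt n$, and the dyadic blocks $t=|v-n/2|\sim 2^k$ for $\tfrac12\log_2 n\le k\le\log_2 n$, where the weight is $\asymp n/2^k$, applying Lemma~\ref{BdLemmaStableLawRates} block by block. Since $\sum_k(nk)\asymp n(\log n)^2$, while each block contributes only $\asymp n\log n$ to $B$ (and $\Pi_n(x)/\pi_n(x)\asymp\sqrt n$ precisely when $x$ lies within $O(\sqrt n)$ of the median), I expect $\tau_n\asymp n(\log n)^2$ with an essentially exponential lower tail, and $B\asymp n\log n$ with the heavy upper tail $P[B>Cn(\log n)^2]=O(1/(C\log n))$ characteristic of the $\alpha=1$ regime.

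Given these estimates the cutoff claim is as in Theorem~\ref{ThmIfCutoff}: on the typical event $\{B\le Cn\log n\}\cap\{\tau_n\ge cn(\log n)^2\}$ one has $\tau_n(1-\lambda_n)\ge c'\log n$ by~\eqref{EqSpectralGapBounders} and~\eqref{BdIneqHittingMixingRelation}; and $P[\tau_n(1-\lambda_n)<M]=O(M/\log n)$ for fixed $M$. Hence if $s(n)$ grows fast enough that $\sum_n 1/\log(s(n))<\infty$ --- this is what ``sufficiently rapidly'' means, e.g. $s(n)=2^{n^{2}}$ works --- one may pick $M_n\to\infty$ with $\sum_n M_n/\log(s(n))<\infty$, Borel--Cantelli excludes $\tau_{s(n)}(1-\lambda_{s(n)})<M_n$ for all large $n$, and Theorem~\ref{BdThmCutoffCrit} yields cutoff with probability one.

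For the non-cutoff direction the obstruction is a single anomalously small super-diagonal entry, exactly as for the IF chains. Let $\mathcal{A}_n$ be the event that some index $y$ has $K_n[y,y+1]<\bigl(\Pi_n(y)/\pi_n(y)\bigr)\big/\bigl(n(\log n)^2\bigr)$; the marginal probability of this is $\asymp \bigl(\Pi_n(y)/\pi_n(y)\bigr)\big/\bigl(n(\log n)^2\bigr)$ by the largeness comparison (Lemma~\ref{LemmaBdLargenessGibbs} and the coupling in Lemma~\ref{BdLemCoupToStat} to the $\sin$-distributed stationary entry, whose density is bounded near $0$), and, summing over a family of roughly independent indices chosen $\asymp\log n$ apart (Corollary~\ref{BdCorGenMixing}) and using $\sum_t\min(\sqrt n,n/t)\asymp n\log n$, one gets $P[\mathcal{A}_n]\ge c/(\log n)^2\ge c'/(\sqrt n\log(n^2))$. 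On $\mathcal{A}_n$ the corresponding term of~\eqref{BdEqHittingTime} exceeds a fixed multiple of the whole sum with probability bounded away from $0$ (the remaining terms being controlled by Lemma~\ref{LemmaBdSmallness} and the L\'evy estimate above), so $\tau_n$ is dominated by that term; but the same term occurs in $B_-$ through~\eqref{EqSpectralGapBounders} with a comparable coefficient, so $B\gtrsim\tau_n$ and $\tau_n(1-\lambda_n)=O(1)$. Since $\sum_n 1/(\sqrt{s(n)}\log(s(n)^2))=\infty$ forces $\sum_n P[\mathcal{A}_{s(n)}]=\infty$ and the chains $K_{s(n)}$ are independent, Borel--Cantelli gives $\liminf_n\tau_{s(n)}(1-\lambda_{s(n)})<\infty$ almost surely, hence no cutoff. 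The main obstacle throughout is the weighted L\'evy-sum analysis behind $\tau_n\asymp n(\log n)^2$ and $B\asymp n\log n$ --- in particular producing the extra factor of $\log n$ in the mixing time (which a naive term count misses) and a lower-tail bound for $\tau_n$ strong enough for Borel--Cantelli --- together with the bookkeeping across the $O(\sqrt n)$-neighborhood of the median, where both $\Pi_n/\pi_n$ and $M(\cdot)$ change character.
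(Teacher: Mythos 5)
Your overall architecture matches the paper's: mixing time $\gtrsim n(\log n)^2$ from the L\'evy sums, relaxation time $o(n(\log n)^2)$ from the case split around the $O(\sqrt{n})$-window of the median, Theorem \ref{BdThmCutoffCrit} plus Borel--Cantelli for cutoff, and a single anomalously small super-diagonal entry as the obstruction for non-cutoff. But you diverge in two substantive ways. (1) For the non-cutoff direction, the paper only uses the $\asymp\sqrt{n}$ indices within $\sqrt{n}$ of the median with a fixed threshold $1/K[j,j+1]\ge n\log(n)^2$, which is exactly what produces the $\sqrt{s(n)}$ in the hypothesis; your event uses all indices with the index-dependent threshold $K[y,y+1]<(\Pi_n(y)/\pi_n(y))/(n(\log n)^2)$, and since $\sum_y\Pi_n(y)/\pi_n(y)\asymp n\log n$ this gives a bad-event probability $\Omega(1/(\log n)^2)$ rather than $\Omega(1/(\sqrt{n}(\log n)^2))$ --- a genuinely stronger conclusion that still implies the stated theorem, provided you check (as you implicitly do) that the large term enters $B_\pm$ with coefficient $\Pi_n(y-1)\asymp\Pi_n(y)$ away from the extreme ends, and that the \emph{other} hitting time $E_n[T_{m(1-\delta)}]$ is simultaneously $O(n(\log n)^2)$ so that $\max(E_0,E_n)/B=O(1)$. (2) For the cutoff direction, the paper deliberately stops at a qualitative statement: its lower bound \eqref{BinLimProbSmallHits} comes from the functional limit theorem with no rate, and it explicitly remarks that a quantitative analogue of Lemma \ref{BdLemmaStableLawRates} would be needed to name a growth rate. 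Your dyadic-block scheme is the right way to supply that, but as sketched it does not close: applying Lemma \ref{BdLemmaStableLawRates} to each of the $\asymp\log n$ blocks and demanding they all succeed forces per-block deficits $C_k\lesssim k$, so the union bound costs $\sum_k O(1/k)=O(1)$, not $o(1)$. You instead need to let each block fail with probability $O(1/\log n)$, bound the \emph{number} of failed blocks by Markov, and observe that a constant fraction of surviving blocks already yields $cn(\log n)^2$; that recovers $P[\tau_n<cn(\log n)^2]=O(1/\log n)$ and makes your $\sum_n 1/\log(s(n))<\infty$ criterion credible, but it is a real missing step rather than a routine one. Since the theorem as stated only asserts cutoff for ``sufficiently rapid'' growth, your proof of the stated result survives even if the quantitative claim is set aside.
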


The first step is to show that there is cutoff if $s(n)$ grows sufficiently rapidly. Begin by looking at the mixing time. By classical arguments (see e.g. chapter 18 of \cite{LPW09}), the expected hitting time of $\frac{n + \sqrt{n}}{2}$ from 0 and of $\frac{n - \sqrt{n}}{2}$ from $n$ for the underlying Metropolis chain is of order $n \log(n) + O(n)$. Thus, by Lemma \ref{BdLemCoupToStat} and then Lemma \ref{BdThmLimitingSumsSupDiag}, 
\begin{equation} \label{BinLimProbSmallHits}
\lim_{n \rightarrow \infty} P[\max(E_{0}[T_{\lfloor \frac{n + \sqrt{n}}{2} \rfloor}],E_{n}[T_{\lceil \frac{n - \sqrt{n}}{2} \rceil}]) < \frac{1}{256} n \log(n)^{2} - C_{n} n \log(n)]  \rightarrow 0
\end{equation}
as long as $C_{n}$ goes to $\infty$ along with $n$. Since $\pi([0, \frac{n + \sqrt{n}}{2}])$ is uniformly bounded away from $\frac{1}{2}$ for all $n$, $\max(E_{0}[T_{\frac{n + \sqrt{n}}{2}}], E_{n}[T_{\frac{n - \sqrt{n}}{2}}])$ is a good approximation of the hitting time for our random chain by inequality \eqref{BdIneqHittingMixingRelation}. Note that in order to get a quantitative bound on a growth rate of $s(n)$ that would imply cutoff, it would be sufficient to get a quantitative bound such as Lemma \ref{BdLemmaStableLawRates} on the rate of convergence here. \par 
Next, it is necessary to look at the spectral gap. Let $\Phi_{n}(x)$ be the probability that flipping $n$ fair coins will result in $x$ or fewer heads, and let $m$ be the median of $\pi$ (either $\frac{n}{2}$ or $\frac{n-1}{2}$ depending on parity). Also let $Z_{j} = \frac{1}{Y_{2j}^{(2)}}$ and $Q_{j} = \frac{1}{Y_{2j+1}^{(4)}}$ be defined as in Lemma \ref{BdLemCoupToStat}. Then
\begin{align} \label{BinSpecGapIneqC4}
B_{+} &= \max_{x <m} \left( \sum_{y=x}^{m} \frac{1}{\pi_{n}(y) K_{n}[y,y+1]} \sum_{y < x} \pi_{n}(y) \right) \\
&= \max_{x < m} \left( \sum_{y = x}^{m} \frac{2^{n}}{\binom{y}{n} K_{n}[y,y+1]} \Phi_{n}(x) \right)  \\
&\leq \max_{x<m} \left( \sum_{2y = x}^{m} \frac{2^{n}}{\binom{y}{n} } Z_{y} \Phi_{n}(x) + \sum_{2y = x}^{m} \frac{2^{n}}{\binom{y}{n}}  Q_{y} \Phi_{n}(x) \right) 
\end{align}
Define
\begin{equation*}
F_{n}(x) = \sum_{2y=x}^{m-1} \frac{2^{n}}{\binom{y}{n}} Z_{y} \Phi_{n}(x)
\end{equation*}
The next step is to bound $F_{n}$, and thus $B_{+}$. There are two cases.\\
\textbf{Case 1: $(n-x)^{2} > n \log(n)$} \\
In this case,
\begin{align*}
F_{n}(x) &= \sum_{2y=x}^{m-1} \frac{2^{n}}{\binom{y}{n}} Z_{y} \Phi_{n}(x) \\
&= O(1) \sum_{2y=x}^{m-1} \frac{1}{\sqrt{n}} Z_{y} 
\end{align*}
In particular, $\sup_{x \, : \, (n-x)^{2} > n \log(n)} F_{n}(x) = O(1) \sum_{2y=x}^{m-1} \frac{1}{\sqrt{n}} Z_{y}$.
Note that, by a union bound over $y$ and the explicit stationary distribution for $Z_{y}$ given immediately after Theorem \ref{BdTheoremFunctClt}, 
\begin{equation} \label{BinIneqOneExpA}
P[\sup_{1 \leq 2y \leq n-1} Z_{y} \geq n^{1.5}] = O \left( \frac{1}{\sqrt{n}} \right)
\end{equation}
By direct computation and Lemma \ref{LemmaBdSmallness},
\begin{equation} \label{BinIneqOneExpB}
E[Z_{j} \textbf{1}_{Z_{j} \leq n^{1.5}}] \leq 3 \log(n)
\end{equation}
Combining Markov's inequality with inequalities \eqref{BinIneqOneExpA} and \eqref{BinIneqOneExpB}, then,
\begin{align} \label{BinIneqC1}
P[\sup_{x \, : \, (n-x)^{2} > n \log(n)} F_{n}(x) \geq Cn] &\leq P[\sum_{2y=x}^{m-1} \frac{1}{\sqrt{n}} Z_{y} \textbf{1}_{Z_{y} \leq n^{1.5}} \geq Cn] + P[\sup_{1 \leq 2y \leq n-1} Z_{y} \geq n^{1.5}] \\
&= O\left( \frac{C \log(n)}{\sqrt{n}} \right)
\end{align}

\textbf{Case 2: $(n-x)^{2} < n \log(n)$} \\
In this case,
\begin{align*}
F_{n}(x) &= \sum_{2y=x}^{m-1} \frac{2^{n}}{\binom{y}{n}} Z_{y} \Phi_{n}(x) \\
&= O(\sqrt{n}) \sum_{2y=n - \sqrt{n \log(n)}}^{m-1} Z_{y} 
\end{align*}

And so $\sup_{x \, : \, (n-x)^{2} < n \log(n)} F_{n}(x) =  O(\sqrt{n}) \sum_{2y=n - \sqrt{n \log(n)}}^{m-1} Z_{y}$. Note that 
\begin{equation} \label{BinIneqBigSupA2}
P \left[ \sup_{n - \sqrt{n \log(n)} \leq 2y \leq n-1} Z_{y} \geq \sqrt{n \log(n)} \log(n) \right] = O \left( \frac{1}{\log(n)} \right)
\end{equation}
and that
\begin{equation} \label{BinIneqExpB2}
E[Z_{j} \textbf{1}_{Z_{j} \leq n^{1.5}}] = O(\sqrt{n \log(n)} \log(n))
\end{equation}
Combining \eqref{BinIneqBigSupA2} and \eqref{BinIneqExpB2} with Markov's inequality, 
\begin{equation} \label{BinIneqSupBound2C2}
P[\sup_{x \, : \, (n-x)^{2} < n \log(n)} F_{n}(x) \geq n \log(n)^{2 - \epsilon}] = O \left( \frac{1}{\log(n)^{0.5 - \epsilon}} \right)
\end{equation}
for any $0 < \epsilon < \frac{1}{2}$. Choose $\epsilon = \frac{1}{4}$. Combine inequalities \eqref{BinIneqC1}, \eqref{BinIneqSupBound2C2} and \eqref{BinSpecGapIneqC4} with inequality \eqref{EqSpectralGapBounders} to find that $\lim_{n \rightarrow \infty} P[\frac{1}{1 - \lambda_{n}} > n \log(n)^{1.5}] = 0$. Combining this with the bound on $\tau_{n}$ given by \eqref{BinLimProbSmallHits} and \eqref{BdIneqHittingMixingRelation} shows that cutoff occurs for $s(n)$ growing sufficiently quickly. Due to the weakness of inequality \eqref{BinLimProbSmallHits}, we have no quantitative information about the growth rate required. \par 
Next, it is necessary to show that there is no cutoff if $s(n)$ grows sufficiently slowly. Analogously to the proof of Theorem \ref{ThmIfCutoff}, let $\mathcal{A}_{j}$ be the event that $\frac{1}{K[j,j+1]} \geq n \log(n)^{2}$, for $\frac{n}{2} - \sqrt{n} \leq j \leq \frac{n}{2} -1$. By the same argument as found in the proof of Theorem \ref{ThmIfCutoff} between inequalities \eqref{BDIneqIFSpecBoundB1} and \eqref{IfIneqCondHitExp},
\begin{equation*}
P[\max(E_{0}[T_{\frac{n + \sqrt{n}}{2}}], E_{n}[T_{\frac{n - \sqrt{n}}{2}}]) \leq C n \log(n)^{2} \vert \mathcal{A}_{j}] \geq \frac{99}{100}
\end{equation*}
for some $C$ fixed and large enough and all $n > N_{0}$. By a calculation almost identical to that immediately following inequality \eqref{IfIneqCondHitExp},
\begin{align*}
P[\mathcal{A}_{j}] &= \Omega \left(\frac{1}{n \log(n)^{2}} \right) \\
P[\mathcal{A}_{j} \cap \mathcal{A}_{i}] &= O \left( \frac{1}{n^{2} \log(n)^{4}} \right) 
\end{align*}
and so
\begin{equation*}
P[ \left( \cup_{i} \mathcal{A}_{i} \right) \cap \{\max(E_{0}[T_{\frac{n + \sqrt{n}}{2}}], E_{n}[T_{\frac{n - \sqrt{n}}{2}}]) \leq C n \log(n)^{2} \}] = \Omega \left( \frac{1}{\sqrt{n} \log(n)^{2}} \right)
\end{equation*}
Since $\tau_{n} (1 - \lambda_{n}) \leq C$ on the set $\left( \cup_{i} \mathcal{A}_{i} \right) \cap \{\max(E_{0}[T_{\frac{n + \sqrt{n}}{2}}], E_{n}[T_{\frac{n - \sqrt{n}}{2}}]) \leq C n \log(n)^{2} \}$, by Borel-Cantelli, there is no cutoff with probability 1 if $\sum_{n} \frac{1}{\sqrt{s(n)} \log(s(n))^{2}}$ diverges. $\square$

\section{Non-Cutoff by Comparison to Metropolis Chains} \label{SecNonCutoff}

This section includes a theorem relating cutoff in random BD chains to cutoff in non-random chains. Let $\pi_{n}$ be a sequence of stationary distributions on $[n] = \{ 1,2,\ldots,2n \}$ with $\frac{1}{4}$, $\frac{1}{2}$ and $\frac{3}{4}$ quantiles given by $u_{n}$, $m_{n}$ and $v_{n}$ respectively. We will compare a sequence $X_{t}^{(n)}$ of random BD chains with stationary distribution $\pi_{n}$, and the `Metropolis' chain $Y_{t}^{(n)}$ with the same stationary distribution, which has transition kernel $M_{n}$ given by
\begin{align*}
M_{n}[i,i+1] &= \frac{1}{4} \min \left( 1, \frac{\pi_{n}(i+1)}{\pi_{n}(i)} \right) \\
M_{n}[i,i-1] &= \frac{1}{4} \min \left( 1, \frac{\pi_{n}(i-1)}{\pi_{n}(i)} \right) 
\end{align*}
This is the chain obtained by applying the Metropolis algorithm to the $\frac{1}{2}$-lazy simple random walk on the path. See \cite{DiSa98} for a survey of the mathematical theory of this algorithm, which has been enormously influential in the application of MCMC methods to real problems. \par 
Next, define the functional $\widehat{f_{n}}$ in $M_{1}^{\star}$ by
\begin{equation*}
\widehat{f_{n}}[x] = \max \left( \sum_{v=1}^{v_{n}-1} \frac{x(\frac{v}{n})}{\pi_{n}(v)} \sum_{q=0}^{v} \pi_{n}(q), \sum_{v=u_{n}+1}^{n} \frac{x(\frac{v}{n})}{\pi_{n}(v)} \sum_{q=v}^{n} \pi_{n}(q) \right)
\end{equation*}
Let $B_{Met,n}$ be the maximum value of the quantities $B_{+}$ and $B_{-}$ defined by equation \eqref{EqSpectralGapBounders} for the Metropolis chain. Then let $x_{n}$ be a sequence satisfying one of the following inequalities for each $n$ and some fixed $\alpha > 0$:
\begin{align*}
\sum_{y=x_{n}}^{m_{n}-1} \frac{1}{M[y,y+1] \pi_{n}(y)} \sum_{y < x_{n}} \pi_{n}(y) &> \alpha B_{Met,n} \\
\sum_{y=m_{n}+1}^{x_{n}} \frac{1}{M[y,y+1]\pi_{n}(y-1)} \sum_{y > x_{n}} \pi_{n}(y) &> \alpha B_{Met,n} \\
\end{align*}
Assume without loss of generality that it is the first of these two inequalities that is being satisfied from now on. Then define the functional $\widehat{g_{n}}$ in $M_{1}^{\star}$ by
\begin{equation*}
\widehat{g_{n}}(x) = \sum_{v=x_{n}}^{m_{n}-1} \frac{x(\frac{v}{n})}{\pi_{n}(v)} \sum_{v<x_{n}} \pi_{n}(v)
\end{equation*}
Assume without loss of generality that $E_{0}[T_{v_{n}}] \geq E_{n}[T_{u_{n}}]$ for the Metropolis chain, and call this larger quantity $\tau_{Met,n}$. By inequality \eqref{BdIneqHittingMixingRelation}, this is close to the actual mixing time of the Metropolis chain. Let $(1 - \lambda_{Met,n})$ be the spectral gap of the Metropolis chain. Define normalizing constants
\begin{align*}
\beta_{1,n}^{-1} &= \sum_{v=0}^{v_{n} - 1} \frac{1}{\pi_{n}(v)} \sum_{q=0}^{v} \pi_{n}(q) \\
\beta_{2,n}^{-1} &= \sum_{y= x_{n}}^{m_{n}-1} \frac{1}{\pi_{n}(y)} \sum_{y < x_{n}} \pi_{n}(y)
\end{align*}
and let the normalized functionals be given by $f_{n} = \beta_{1,n} \widehat{f_{n}}$ and $g_{n} = \beta_{2,n} \widehat{g_{n}}$. Then

\begin{thm} [Weak Comparison to Metropolis] \label{BdThmWeakMetropolisComparison}
Assume $f_{n} \rightarrow f$ and $g_{n} \rightarrow g$ in the $M_{1}^{\star}$ topology, for some continuous functionals $f$ and $g$ and suitable normalizing sequences $\beta_{1,n}$ and $\beta_{2,n}$. Further assume that $x_{n}$, $u_{n}$, $n-v_{n}$, and $(m_{n}-x_{n})$ all go to infinity as $n$ does. Then if $Y_{t}$ does not exhibit cutoff, $X_{t}$ does not either with probability 1.
\end{thm}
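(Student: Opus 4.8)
The plan is to reduce the claim to a boundedness statement for the mixing-time--spectral-gap product and then to obtain that boundedness, along a subsequence, almost surely. By Theorem \ref{BdThmCutoffCrit} (working throughout with the $\tfrac12$-lazy chains, which alters all the quantities below only by universal constants) absence of cutoff for $Y_t$ forces a subsequence $(n_k)$ and a constant $R<\infty$ with $\tau_{Met,n_k}(1-\lambda_{Met,n_k})\le R$, while by Proposition 18.4 of \cite{LPW09} it suffices, conversely, to show that $\liminf_n\tau_n(1-\lambda_n)<\infty$ almost surely for the random chains, where (by \eqref{BdIneqHittingMixingRelation}, with $\delta=\tfrac34$) we may take $\tau_n=\widehat f_n[1/K_n]$, the functional $\widehat f_n$ evaluated at the sequence $1/K_n$ of reciprocals of the super-diagonal entries of $K_n$, and likewise $\tau_{Met,n}=\widehat f_n[1/M_n]$ up to negligible boundary terms. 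Since $x_n$ is an admissible point in the maximum defining $B_-$ in \eqref{EqSpectralGapBounders}, for any birth and death chain with stationary distribution $\pi_n$ one has $B\ge\widehat g_n[1/K]$, so Miclo's bounds give $1-\lambda\le 2/\widehat g_n[1/K]$; applied to $M_n$, together with the defining inequality $B_{Met,n}<\alpha^{-1}\widehat g_n[1/M_n]$ of $x_n$, this yields $\tau_{Met,n}(1-\lambda_{Met,n})\ge\tfrac{\alpha}{4}\,\widehat f_n[1/M_n]/\widehat g_n[1/M_n]$, hence $\widehat f_{n_k}[1/M_{n_k}]/\widehat g_{n_k}[1/M_{n_k}]\le 4R/\alpha$ along the subsequence.

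Next I would compare $K_n$ to $M_n$ by means of Lemma \ref{BdLemCoupToStat}: since the comparison chains $Y^{(i)}_j$ there take values in $[0,1]$, the upper sandwich bounds collapse to $K_n[j,j+1]\le C_0\,M_n[j,j+1]$ for a universal constant $C_0$ and all interior $j$, whence $\widehat g_n[1/K_n]\ge C_0^{-1}\widehat g_n[1/M_n]$. Putting this together with the spectral-gap bound and the previous estimate,
\begin{equation*}
\tau_n(1-\lambda_n)\ \lesssim\ \frac{\widehat f_n[1/K_n]}{\widehat g_n[1/K_n]}\ \le\ C_0\,\frac{\widehat f_n[1/K_n]}{\widehat g_n[1/M_n]}\ =\ C_0\,\frac{\widehat f_n[1/K_n]}{\widehat f_n[1/M_n]}\cdot\frac{\widehat f_n[1/M_n]}{\widehat g_n[1/M_n]},
\end{equation*}
where along $(n_k)$ the last factor is at most $4R/\alpha$. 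As $M_n[j,j+1]=\tfrac14\min(1,\pi_n(j+1)/\pi_n(j))\le\tfrac14$, the Metropolis super-diagonal reciprocals are all $\ge 4$, so $f_n[1/M_n]=\beta_{1,n}\widehat f_n[1/M_n]$ is bounded below by a positive constant; hence it remains only to prove that the random quantity $\widehat f_n[1/K_n]/\widehat f_n[1/M_n]=f_n[1/K_n]/f_n[1/M_n]$ is tight along $(n_k)$.

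For that, split $\widehat f_n[1/K_n]$ over even and odd coordinates and use Lemma \ref{BdLemCoupToStat} to sandwich each piece between fixed multiples of a weighted sum $\sum_v c_{n,v}/Y^{(i)}_v$ whose weights $c_{n,v}$ are the summands of $\widehat f_n[1/M_n]$ and whose $Y^{(i)}_v$ are copies of the stationary limiting chain $Z_j$; after normalizing by $\beta_{1,n}$ these become pairings of the functionals $f_n$ with the renormalized partial-sum processes of the heavy-tailed sequences $1/Y^{(i)}_v$, which converge in the $M_1$ topology to $1$-stable L\'evy processes by Theorem \ref{BdThmLimitingSumsSupDiag}. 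The hypotheses $f_n\to f$ and $g_n\to g$ in the $M_1^\star$ topology then make these pairings converge, so $f_n[1/K_n]$ converges in distribution to a random variable $f[W]$ built from the limiting L\'evy process $W$; the assumptions $x_n,u_n,n-v_n,m_n-x_n\to\infty$ keep the limiting windows nondegenerate and force $f[W]<\infty$ almost surely (and the companion functional $g$ of the limit to be positive). Consequently $f_{n_k}[1/K_{n_k}]$, and with it $\tau_{n_k}(1-\lambda_{n_k})$, is tight, so there are $R''<\infty$ and $c>0$ with $P[\tau_{n_k}(1-\lambda_{n_k})\le R'']\ge c$ for every large $k$.

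Finally, because the chains $X_t^{(n)}$ are independent across $n$, the second Borel--Cantelli lemma gives $\tau_{n_k}(1-\lambda_{n_k})\le R''$ for infinitely many $k$ almost surely, so $\liminf_n\tau_n(1-\lambda_n)\le R''<\infty$ almost surely, and $X_t$ fails to exhibit cutoff with probability one by Proposition 18.4 of \cite{LPW09}. I expect the main obstacle to be the third step: pushing the functional limit theorem for the stationary chain $Z_j$ through the lossy coupling of Lemma \ref{BdLemCoupToStat}, which controls $X_j$ only between several distinct copies of $Z_j$ --- so that the limit is a functional of several, possibly independent, L\'evy processes --- while retaining enough joint structure to identify the $M_1^\star$ limit, to ensure it is almost surely finite, and to verify that $M_1^\star$-continuity is exactly the hypothesis needed to carry the functionals to the limit along the converging partial-sum processes.
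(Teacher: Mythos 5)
There is a genuine gap, and it sits exactly where you did not expect it: in the spectral-gap step, not the functional-limit step. Your bound on the gap of the random chain uses only the deterministic inequality $K_n[j,j+1]\le C_0 M_n[j,j+1]$, giving $1-\lambda_n\le 2C_0/\widehat{g_n}[1/M_n]\asymp 1-\lambda_{Met,n}$. But the reciprocals $1/K_n[j,j+1]$ are heavy-tailed with truncated mean of order $\log n$ (Lemmas \ref{LemmaBdLargenessGibbs} and \ref{LemmaBdSmallness}), so the correct statement — and the one the paper proves using the hypothesis $g_n\to g$, which your argument never invokes — is that $\widehat{g_n}[1/K_n]\gtrsim \log(n)\,\widehat{g_n}[1/M_n]$ with high probability, i.e.\ $\tfrac{1}{1-\lambda_n}\ge \tfrac{\alpha}{48}\tfrac{\log(n)+\psi_n}{1-\lambda_{Met,n}}$ with $\psi_n$ converging to a L\'evy variable. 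Correspondingly, your claim that $\widehat{f_n}[1/K_n]/\widehat{f_n}[1/M_n]$ is tight is false: Theorem \ref{BdThmLimitingSumsSupDiag} is a statement about the \emph{centered} partial sums $V_n(t)=\sum_{k\le nt}\tfrac{Z_k}{2n}-\lfloor nt\rfloor E[\cdot]$, and the centering term you dropped contributes a deterministic drift $\asymp \log(2n)$ per unit of $t$. Hence $\widehat{f_n}[1/K_n]\approx \log(n)\,\widehat{f_n}[1/M_n]+O(\text{L\'evy fluctuation})$, the ratio is of order $\log n$ in probability, and your chain of inequalities only yields $\tau_n(1-\lambda_n)\lesssim \log(n)\cdot\tau_{Met,n}(1-\lambda_{Met,n})$, which diverges and proves nothing.

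The paper's proof is precisely the repaired version of your outline: it keeps the $\log n$ on both sides and cancels it. Lemma \ref{LemmaLastHittingBound} shows $E_0[T_{v_n}]\le 512\,\tau_{Met,n}(\log n + C_n)$ with probability tending to $1$, by splitting the coupled sum into its $\log(n)\cdot\tau_{Met,n}$ drift plus a fluctuation that, normalized by $\beta_{1,n}$, converges to $f(V)$ via $f_n\to f$ and Theorem \ref{BdThmLimitingSumsSupDiag}; the matching lower bound $\tfrac{1}{1-\lambda_n}\ge\tfrac{\alpha}{48(1-\lambda_{Met,n})}(\log n - C_n)$ uses $g_n\to g$ the same way. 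The product bound \eqref{IneqReallySimple} then has no $\log n$ left in it, and the Borel--Cantelli conclusion along the non-cutoff subsequence of the Metropolis chain (where your use of independence and the second Borel--Cantelli lemma is fine, and matches the paper) goes through. So your overall architecture is right, but as written the argument cannot close without reinstating the centering in the functional limit theorem on the $f$ side and exploiting the $g_n\to g$ hypothesis on the spectral-gap side.
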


The proof follows the same basic outline as found in section 7 of \cite{DiWo10}. First, it is necessary to bound the expected hitting time from above.  

\begin{lemma}[Hitting Time for Metropolized Chains] \label{LemmaLastHittingBound}
Under the conditions of Theorem \ref{BdThmWeakMetropolisComparison}, for any sequence $C_{n} \rightarrow \infty$,
\begin{equation*}
\lim_{n \rightarrow \infty} P[E_{0}[T_{v_{n}}] > 512 \tau_{Met,n} ( C_{n} + \log(n))] = 0
\end{equation*}
\end{lemma}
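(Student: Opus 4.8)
The plan is to express $E_0[T_{v_n}]$ for the random chain $X_t$ via the classical hitting time formula \eqref{BdEqHittingTime} and control it by comparison with $\tau_{\mathrm{Met},n}$. First I would write
\[
E_0[T_{v_n}] = \sum_{v=0}^{v_n - 1} \frac{1}{\pi_n(v) K[v,v+1]} \sum_{q=0}^{v} \pi_n(q),
\]
and apply Lemma \ref{BdLemCoupToStat} (Coupling to Stationarity) to replace each $\tfrac{1}{K[v,v+1]}$ by $256\, M(v)\, \tfrac{1}{Y_v^{(\cdot)}}$, where the chains $Y_t^{(i)}$ have the distribution of the limiting stationary super-diagonal process $Z_j$. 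Since $M(v) = 16\min(1,\pi_n(v+1)/\pi_n(v))$ is, up to the constant $16$, exactly $4$ times the Metropolis super-diagonal entry $M_n[v,v+1]$, the factor $M(v)$ is precisely what converts the random-chain hitting sum into the Metropolis hitting sum weighted by $1/Y_v^{(\cdot)}$. Thus, up to universal constants (this is where the $512$ comes from),
\[
E_0[T_{v_n}] \leq \mathrm{const}\cdot \sum_{v=0}^{v_n-1} \frac{1}{M_n[v,v+1]\,\pi_n(v)}\,\frac{1}{Y_v^{(\cdot)}} \sum_{q=0}^v \pi_n(q) \;=\; \mathrm{const}\cdot \widehat{f_n}\!\left[ x \mapsto \tfrac{1}{Y_{\lfloor n x\rfloor}^{(\cdot)}} \right] \cdot (\text{something}),
\]
using the definition of $\widehat{f_n}$ and that $\tau_{\mathrm{Met},n} = E_0[T_{v_n}]^{\mathrm{Met}}$ which is, by \eqref{BdIneqHittingMixingRelation}, comparable to $\widehat{f_n}[\mathbf{1}]$ with $\mathbf{1}$ the constant function (the $1/M_n$-weighted sum with no $1/Y$ factor).

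The key step is then to bound the random functional $\widehat{f_n}[1/Y^{(\cdot)}_\cdot]$ in terms of the deterministic $\widehat{f_n}[\mathbf 1]$. The mechanism is the one already used in the proof of Theorem \ref{ThmIfCutoff} around inequalities \eqref{IneqBdSumA1}--\eqref{IneqBadSetIf}: split each entry $1/Y_v^{(\cdot)}$ according to whether it exceeds a threshold of order $n\log n$ (a "large" entry) or not. For the truncated part, $E[\tfrac{1}{Y_v}\mathbf 1_{Y_v \geq 1/(n\log n)}] = O(\log n)$ by the explicit density of $Z_j$ (Theorem 6.5 of \cite{DiWo10}), so summing against the probability weights $\beta_{1,n}\,\tfrac{1}{\pi_n(v)}\sum_{q\leq v}\pi_n(q)$ (which sum to $1$ by the choice of $\beta_{1,n}$) gives a contribution of order $\tau_{\mathrm{Met},n}\log n$ in expectation, hence $O(\tau_{\mathrm{Met},n}(C_n + \log n))$ with probability going to $1$ by Markov. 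For the "large" entries, I would use Lemma \ref{LemmaBdLargenessGibbs} (Largeness) — each $1/Y_v$ exceeds $n\log n$ with probability $O(1/(n\log n))$ — together with a union bound over the at most $n$ relevant indices: with probability $\geq 1 - O(1/\log n)$ \emph{no} entry is large, so the large part vanishes. Combining, $E_0[T_{v_n}] \leq 512\,\tau_{\mathrm{Met},n}(C_n + \log n)$ outside an event of probability $O(1/\log n) + O(1/C_n) \to 0$.

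The main obstacle — and the reason the hypothesis $f_n \to f$ in the $M_1^\star$ topology is assumed — is that the crude truncation argument above produces a bound with a $\log n$ factor that is \emph{not} tight, and for the non-cutoff conclusion of Theorem \ref{BdThmWeakMetropolisComparison} one needs the comparison to respect the precise order of $\tau_{\mathrm{Met},n}$; here, though, Lemma \ref{LemmaLastHittingBound} only claims an upper bound with an extra $(C_n + \log n)$ slack, so the truncation suffices and the $M_1^\star$ convergence is not actually needed for \emph{this} lemma (it will be needed later to push the lower bound through). The one genuinely delicate point is handling indices $v$ near the boundary (where the Markov/stationary approximation of $Z_j$ degrades) and near $v_n$, $m_n$: I would absorb the $O(\log n)$-many boundary entries into the $C_n$ term directly, using that $u_n, n - v_n, x_n, m_n - x_n \to \infty$ guarantees these boundary blocks are asymptotically negligible fractions of the total weight. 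A secondary bookkeeping point is tracking which of the four chains $Y^{(1)},\dots,Y^{(4)}$ controls even versus odd indices $v$, but as in Lemma \ref{BdLemCoupToStat} this only costs another factor of $2$, safely inside the constant $512$.
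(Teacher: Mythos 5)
Your setup --- the hitting-time formula \eqref{BdEqHittingTime} combined with Lemma \ref{BdLemCoupToStat} to replace $1/K[v,v+1]$ by $256\,M(v)^{-1}\cdot$(wait: by $256/(M(v)Y_v^{(\cdot)})$), which converts the random-chain weights into Metropolis weights --- is exactly the paper's first step. The gap is in how you then control the weighted sum $S=\sum_v w_v/Y_v^{(\cdot)}$ with $\sum_v w_v \asymp \tau_{Met,n}$. Your truncated part has expectation of order $\tau_{Met,n}\log n$, while the threshold in the lemma, $512\,\tau_{Met,n}(C_n+\log n)$, is \emph{also} of order $\tau_{Met,n}\log n$ when $C_n$ grows slowly (say $C_n=\log\log n$, which the lemma must accommodate since $C_n$ is an arbitrary sequence tending to infinity). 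Markov's inequality against a threshold that is a bounded multiple of the mean yields only a constant bound on the probability, not one tending to zero. Chebyshev does not rescue this: with truncation level $n\log n$ each truncated term has second moment of order $n\log n$, giving a fluctuation bound far larger than $\tau_{Met,n}C_n$; lowering the truncation level to $O(n)$ breaks the union bound over ``large'' entries, since each $1/Y_v$ exceeds $n$ with probability of order $1/n$ and there are $n$ of them.

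This is precisely why the paper invokes Theorem \ref{BdThmLimitingSumsSupDiag} together with the hypothesis $f_n\to f$ in $M_1^{\star}$ --- the very hypothesis you assert is not needed for this lemma. The paper decomposes $\tfrac{1}{Y_v}=E[\tfrac{1}{Y_v}\mathbf{1}_{1/Y_v\le 2n}]+(\tfrac{1}{Y_v}-E[\tfrac{1}{Y_v}\mathbf{1}_{1/Y_v\le 2n}])$; the deterministic part contributes order $\tau_{Met,n}\log n$, inside the $512\,\tau_{Met,n}\log n$ budget, while the centered part, weighted and multiplied by $\beta_{1,n}\asymp 1/\tau_{Met,n}$, is the functional $f_n$ evaluated on the centered partial-sum process, which by the functional limit theorem and the continuity of $f$ converges in distribution to $f(V)$ for a L\'evy variable $V$. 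Tightness of that limit is what shows the fluctuation is $O_P(\tau_{Met,n})=o_P(C_n\tau_{Met,n})$, and hence that the probability vanishes. In this $\alpha=1$ stable regime the sum genuinely exceeds its truncated mean by a non-concentrating amount of order $\tau_{Met,n}$, so some version of the stable limit theorem (or an explicit L\'evy-measure computation) is unavoidable; first- and second-moment arguments cannot close the gap between ``probability bounded by a constant'' and ``probability tending to zero.''
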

Defining $Y_{2t}^{(i)}$ as in Lemma \ref{BdLemCoupToStat},
\begin{align*}
E_{0}&[T_{v_{n}}] \leq 256 \sum_{2i=0}^{v_{n}} \frac{1}{\pi_{n}(2i) Y_{2i}^{(1)}} \sum_{j=0}^{i} \pi_{n}(j) + 256 \sum_{2i+1=0}^{v_{n}} \frac{1}{\pi_{n}(2i) Y_{2i+1}^{(3)}} \sum_{j=0}^{i} \pi_{n}(j)\\
&\geq 256 \left( \sum_{2i=0}^{v_{n}} \frac{\log(n)}{\pi_{n}(2i) } \sum_{j=0}^{i} \pi_{n}(j) + \sum_{2i=0}^{v_{n}} \frac{1}{\pi_{n}(2i)} \left( \frac{1}{ Y_{2i}^{(1)}} - E \left[ \frac{1}{ Y_{2i}^{(1)}} \textbf{1}_{\frac{1}{ Y_{2i}^{(1)}} \leq 2n} \right] \right) \sum_{j=0}^{i} \pi_{n}(j)  \right) \\
&+ 256 \left( \sum_{2i+1=0}^{v_{n}} \frac{\log(n)}{\pi_{n}(2i+1) } \sum_{j=0}^{i} \pi_{n}(j) + \sum_{2i+1=0}^{v_{n}} \frac{1}{\pi_{n}(2i+1)} \left( \frac{1}{ Y_{2i+1}^{(3)}} - E \left[ \frac{1}{ Y_{2i+1}^{(3)}} \textbf{1}_{\frac{1}{ Y_{2i+1}^{(3)}} \leq 2n} \right] \right) \sum_{j=0}^{i} \pi_{n}(j)  \right)
\end{align*}
The first term is at most $256 \tau_{Met,n}$, while by Theorem \ref{BdThmLimitingSumsSupDiag} and the assumptions, the second term multiplied by $\beta_{1,n}$ converges in distribution to $f(V)$, with $V$ a Levy variable.  The same bounds apply to the third and fourth terms respectively. $\square$ \par 
Next, it is necessary to examine the spectral gap. Let $(1 - \lambda_{n})$ be the gap of the random chain. Then, following a similar calculation,

\begin{align*}
\frac{1}{1 - \lambda_{n}} &\geq \frac{1}{4} \sum_{y=x_{n}}^{m_{n}-1} \frac{1}{K[y,y+1] \pi(y)} \sum_{y < x_{n}} \pi_{n}(y) \\
&\geq \frac{\alpha}{48} \frac{1}{1 - \lambda_{Met,n}}(\log(n) + \psi_{n})
\end{align*}
where, again, $\psi_{n}$ converges in distribution to a Levy variable. Thus, if $\lim_{n \rightarrow \infty} C_{n} = \infty$, 
\begin{equation} \label{IneqLastSpecBoundReally}
\lim_{n \rightarrow \infty} P[\frac{1}{1 - \lambda_{n}} < \frac{\alpha}{48 (1 - \lambda_{Met,n})}(\log(n) - C_{n})] = 0
\end{equation}

Putting together Lemma \ref{LemmaLastHittingBound} and inequality \eqref{IneqLastSpecBoundReally}, and setting $C_{n} = \log(n)^{0.1}$, gives
\begin{equation} \label{IneqReallySimple}
\lim_{n \rightarrow \infty} P[\tau_{n} (1 - \lambda_{n}) > \frac{24576}{\alpha} \tau_{Met,n} (1 - \lambda_{Met,n})] = 0
\end{equation}
Since there exists some constant $C$ so that $\tau_{Met,n} (1 - \lambda_{Met,n}) < C$ infinitely often, by the Borel-Cantelli Lemma and \ref{IneqReallySimple}, the random chain satisfies $\tau_{n} (1 - \lambda_{n}) < \frac{24576}{\alpha} C$ infinitely often. $\square$ \par

Because of the slightly exotic metrics used in its definition, it might not be clear that Theorem \ref{BdThmWeakMetropolisComparison} can actually be used. To illustrate its applications, the following is a short second proof of the cutoff result in \cite{DiWo10} using this theorem. Set $\pi_{n}(i) = \frac{1}{n}$ for all $1 \leq i \leq n$. Then set $x_{n} = \frac{n}{4}$. It is clear that
\begin{equation*}
\sum_{y = \frac{n}{2}}^{\frac{n}{2}} \frac{4}{\frac{1}{n}} \sum_{y=0}^{\frac{n}{4}} \frac{1}{n} = \frac{n^{2}}{4}
\end{equation*} 
is within a constant factor of the inverse spectral gap of the Metropolis chain, so the choice of $x_{n}$ is allowed. We also note that $u_{n} = \frac{n}{4} + O(1)$, $m_{n} = \frac{n}{2} + O(1)$, and $v_{n} = \frac{3n}{4} + O(1)$. Next, note that
\begin{align*}
f_{n}(\lambda) &= \sum_{v=0}^{\frac{3n}{4}} \frac{v}{n} \lambda( \frac{v}{n}) + O \left( \frac{ \vert \vert \lambda \vert \vert_{\infty}}{n} \right)\\
g_{n}(\lambda) &= \sum_{v= \frac{n}{4}}^{\frac{n}{2}} \frac{1}{n} \lambda(\frac{v}{n}) + O \left( \frac{ \vert \vert \lambda \vert \vert_{\infty}}{n} \right)
\end{align*}
The last step is to show that
\begin{align*}
f_{n}(\lambda) &\rightarrow \int_{0}^{\frac{3}{4}} v \lambda(v) dv\\
g_{n}(\lambda) &\rightarrow \int_{\frac{1}{4}}^{\frac{1}{2}} \lambda(v) dv
\end{align*}
in the $M_{1}^{\star}$ topology. It is clear that this convergence holds when applied to all continuous functions $\lambda \in D[0,1]$, and that these functions are dense in $D[0,1]$ under the $M_{1}$ topology, so it is sufficient to show that the limiting functions are continuous in $M_{1}^{\star}$. It is easiest to look at the limiting functional $g$, though the proof for $f$ is essentially identical. Let $\lambda_{n} \rightarrow \lambda$ in the $M_{1}$ topology. By Theorem 2.4.1 of \cite{Skor56} and an application of the triangle inequality, for all $\delta > 0$, there exists some $N$ such that for all $n > N$,
\begin{equation*}
\inf_{v - \delta \leq u \leq v + \delta} \lambda(u) - \delta \leq \lambda_{n}(v) \leq \sup_{v - \delta \leq u \leq v + \delta} \lambda(u) + \delta
\end{equation*}
Lets look at only the upper bound, since the lower is identical.
\begin{equation*}
\int_{0}^{\frac{3}{4}} (\lambda_{n}(v) - \lambda(v)) dv \leq \int_{0}^{\frac{3}{4}} \delta dv + \int_{0}^{\frac{1}{2}} (\sup_{v - \delta \leq u \leq v + \delta} \lambda(u) - \lambda(v)) dv
\end{equation*}
The first term clearly goes to 0 as $\delta$ goes to 0, and the second goes to 0 by the dominated convergence theorem. Thus, integration is continuous, and the theorem can be applied in this case. \par 

We conjecture that divergence of $\sum_{n} \frac{1}{\tau_{s(n)} (1 - \lambda_{s(n)})}$ for the Metropolis chain also implies lack of cutoff for the random chain, but have not been able to prove it. The simple conjecture just comes from assuming that unusually large values of $\frac{1}{1 - \lambda_{n}}$ don't tend to occur especially often with unusually large values of $\tau_{n}$. In particular, this conjecture would hold if the sums defining the expected hitting times and spectral gap were all independent. Note also that this conjecture is based only on the spectral gap of the original chain, and as shown by Theorem \ref{ThmIfCutoff}, it cannot be sharp.

\bibliographystyle{plain}
\bibliography{BdCutoffBib}

\end{document}